\newtheorem{theorem}{Theorem}[section]
\newtheorem{lemma}[theorem]{Lemma}
\newtheorem{remark}[theorem]{Remark}
\newtheorem{definition}[theorem]{Definition}
\newtheorem{proposition}[theorem]{Proposition}
\newtheorem{corollary}[theorem]{Corollary}
\newtheorem{problem}[theorem]{Problem}
\newtheorem{example}[theorem]{Example}
\newtheorem{construction}[theorem]{Construction}
\newtheorem{observation}[theorem]{Observation}
\long\def\symbolfootnote[#1]#2{\begingroup%
\def\thefootnote{\fnsymbol{footnote}}\footnote[#1]{#2}\endgroup}
\newcommand\Z{{\mathbb Z}}
\begin{document}

\title{Distributivity versus associativity in the homology theory of algebraic structures}
\author{J\'ozef H. Przytycki}

\centerline{Gdansk, July 2010 -- Bethesda, September 2011}
\subjclass{Primary 55N35; Secondary 18G60, 57M25}
\keywords{monoid of binary operations, group of racks, distributive homology,  presimplicial module, 
lattice, Boolean algebra}

\thispagestyle{empty}

\begin{abstract}
While homology theory of associative structures, such as groups and rings,
has been extensively studied in the past beginning with the work of Hopf, Eilenberg, and
Hochschild, homology of non-associative distributive structures, such as quandles, were neglected until
recently. Distributive structures have been studied for a long time. In 1880, C.S.~Peirce 
 emphasized the importance of (right) self-distributivity in algebraic
structures. However, homology for these universal algebras was introduced only sixteen
years ago by Fenn, Rourke, and Sanderson. We develop this theory in the historical
context and propose a general framework to study homology of distributive structures. 
We illustrate the theory by computing some examples of 1-term and 2-term homology, and then 
discussing 4-term homology for Boolean algebras. 
We outline potential relations to Khovanov homology, via the Yang-Baxter operator.

\end{abstract}

\maketitle

\tableofcontents

\section{Introduction}
This paper is a summary of numerous talks I gave last year\footnote{From my Summer 2010 talk at 
Knots in Poland III to a seminar at Warsaw Technical University in June 2011 (e.g. Knots in Chicago 
conference, AMS meeting in Chile, Conference in San Antonio, Colloquium at  U. Louisiana, seminars 
at GWU, Columbia University, George Mason U., Universidad de Valparaiso, 
SUNY at Buffalo, University of Maryland, Warsaw University and Gdansk University, 
and Knots in Washington XXXI and XXXII).  I am grateful for the opportunity 
given and stress that I gained a lot from interaction with the audience.}. 
My goal was to understand homology 
theory related to distributivity (and motivated by knot theory), but along the way I discovered various 
elementary structures, probably new, or at least not studied before. Thus I will devote the second section 
to the monoid of binary operations and its elementary properties. This, in addition to being a basis 
for my multi-term distributive homology, may be of interest to  people working on universal algebras.

Because I hope for a broad audience I do not assume any specific knowledge of homological algebra 
or algebraic topology and will survey the basic concepts like chain complex chain homotopy and 
abstract simplicial complex in Sections 3 and 4. In the fifth section I recall two classical 
approaches to homology of a semigroup: group homology and Hochschild homology. In the sixth section 
we build a one-term homology of  distributive structures and recall the definition of the rack homology 
of Fenn, Rourke, and Sanderson \cite{FRS}. In further sections we deepen our study of distributive homology, define 
multi-term distributive homology and show a few examples of computations. In the tenth section we 
relate distributivity to the third Reidemeister move (or braid relation) and discuss motivation coming from 
knot theory. In a concluding remark we speculate on relations with the Yang-Baxter operator and 
a potential path to Khovanov homology.

\markboth{\hfil{\sc Distributivity versus associativity }\hfil}
{\hfil{\sc Jozef H. Przytycki}\hfil}

\section{Monoid of binary operations}

Let $X$ be a set and $*:X\times X \to X$ a binary operation. We call $(X;*)$ a {\it magma}. 
For any $b\in X$ the adjoint maps $*_b: X\to X$, is defined by $*_b(a)=a*b$.
Let $Bin(X)$ be the set of all binary operations on $X$.
\begin{proposition}\label{Proposition 2.1} $Bin(X)$ has a monoidal 
(i.e. semigroup with identity) structure with  composition 
$*_1*_2$ given by $a*_1*_2b= (a*_1b)*_2b$, and the identity $*_0$ being the right trivial operation, that is,  
$a*_0b=a$ for any $a,b\in X$.
\end{proposition}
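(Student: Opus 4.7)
The plan is to verify directly the two monoid axioms: associativity of the composition $*_1 *_2$ and the identity property of $*_0$. Both reduce to routine unfoldings of the definition $a *_1 *_2 b = (a *_1 b) *_2 b$, so the "hard part" is really just being careful about bookkeeping, since there is no intrinsic obstacle.

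First I would check that the right trivial operation $*_0$ is a two-sided identity. For any $* \in Bin(X)$ and any $a,b \in X$,
\[
a(*_0 *)b = (a *_0 b)*b = a * b, \qquad a(* *_0)b = (a*b)*_0 b = a*b,
\]
so $*_0 * \;=\; *\;= *\, *_0$. This handles the identity axiom and also shows closure is not an issue, because the defining formula manifestly returns an element of $X$ for each input pair, hence a well-defined binary operation.

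Next I would verify associativity by computing $((*_1 *_2) *_3)(a,b)$ and $(*_1(*_2 *_3))(a,b)$ and observing that both unfold to $((a *_1 b) *_2 b) *_3 b$. Explicitly,
\[
a((*_1 *_2)*_3)b = (a(*_1 *_2)b) *_3 b = ((a *_1 b) *_2 b) *_3 b,
\]
and
\[
a(*_1(*_2 *_3))b = (a *_1 b)(*_2 *_3) b = ((a *_1 b) *_2 b) *_3 b,
\]
which agree. Together with the identity computation, this establishes the monoidal structure.

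The only step requiring any thought is making sure one does not confuse the "$b$" that is the second argument of the composed operation with an intermediate element; since in every composition the right-hand input $b$ is reused unchanged at each stage (this is what the adjoint viewpoint $*_b \colon X \to X$ makes transparent), associativity in $Bin(X)$ is essentially inherited from associativity of composition of the unary maps $(*_i)_b$ for fixed $b$. I expect this reformulation, rather than the two symbolic checks, to be the cleanest way to present the argument if one wishes to avoid grinding through the substitutions.
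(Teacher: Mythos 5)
Your proposal is correct and follows essentially the same route as the paper: a direct unfolding showing both $a((*_1*_2)*_3)b$ and $a(*_1(*_2*_3))b$ equal $((a*_1b)*_2b)*_3b$, together with the observation that associativity is inherited from composition of the adjoint maps $(*_i)_b$. Your explicit verification that $*_0$ is a two-sided identity is a small addition the paper leaves implicit, but nothing of substance differs.
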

\begin{proof} Associativity follows from the fact that adjoint maps $*_b$ 
compose in an associative way, $(*_3)_b((*_2)_b(*_1)_b) = ((*_3)_b(*_2)_b)(*_1)_b$; we can write 
directly: $a(*_1*_2)*_3b= ((a*_1b)*_2b)*_3b = (a*_1b)(*_2*_3)b= a*_1(*_2*_3)b$.  
\end{proof}
The submonoid of $Bin(X)$ of all invertible elements in $Bin(X)$ is a group denoted by $Bin_{inv}(X)$.
If $* \in Bin_{inv}(X)$ then $*^{-1}$ is usually denoted by $\bar *$.

It is worth mentioning here that the composition of operations in the monoid $Bin(X)$ may be thought as 
taking first the diagonal coproduct $\Delta: X\to X\times X$ (i.e., $\Delta(b) = (b,b)$) and 
applying the result on $a\in X$; Berfriend Fauser suggested after my March talk in San Antonio 
to try other comultiplications  (he did some unpublished work on it).

One should also remark that $*_0$ is distributive with respect to any other operation, that is,  
$(a*b)*_0c= a*b= (a*_0c)*(b*_0c)$, and $(a*_0b)*c= a*c= (a*c)*_0(b*c)$. 
This distributivity later plays an important role\footnote{Notice that $*_0$ and $*$ are seldom associative,
as $(a*_0b)*c=a*c$ but $a*_0(b*c)=a$.}.

While the associative magma has been called a semigroup for a long time,  the right self-distributive magma
didn't have an established  name, even though C.S.~Peirce considered it in 1880. Alissa Crans, in her PhD thesis of 2004,
suggested the name  {\it right shelf} (or simply  {\it shelf}) \cite{Cr}. Below we write the formal definition 
of a shelf and the related notions of {\it spindle}, {\it rack}, and {\it quandle}.

\begin{definition}\label{Definition 2.2} Let $(X;*)$ be a magma, then: 
\begin{enumerate}
\item[(i)] If $*$ is right self-distributive, that is, $(a*b)*c=(a*c)*(b*c)$,
then $(X;*)$ is called a shelf.
\item[(ii)] If a shelf $(X;*)$ satisfies the idempotency condition, $a*a=a$ for any $a\in X$, then it 
is called a {\it right spindle}, or just a spindle (again the term coined by Crans).
\item[(iii)] If a shelf $(X;*)$ has $*$ invertible in $Bin(X)$ (equivalently $*_b$ is a bijection for any $b\in X$),
then it is called a {\it rack} (the term wrack, like in ``wrack and ruin", of J.H.Conway from 1959 \cite{C-W}, 
was modified to rack in \cite{F-R}).
\item[(iv)] If a rack $(X;*)$ satisfies the idempotency condition, then it is called a {\it quandle} (the term 
coined in Joyce's PhD thesis of 1979; see \cite{Joy}). Axioms of a quandle were motivated by three 
Reidemeister moves (idempotency by the first move, invertibility by the second, and right self-distributivity 
by the third move); see Section 10 and Figures 10.2-10.4.
\item[(v)] If a quandle $(X;*)$ satisfies $**=*_0$ (i.e. $(a*b)*b=a$) then it is called  {\it kei} or 
an involutive quandle. The term kei (\psfig{figure=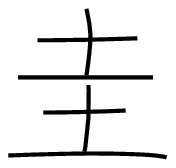,height=0.3cm}) 
was coined in a pioneering paper by Mituhisa Takasaki in 1942 \cite{Tak}
\end{enumerate}
\end{definition}
The main early example of a rack (and a quandle) was a group $G$ with a $*$ operation given 
by conjugation, that is, $a*b=b^{-1}ab$; Conway jokingly thought about it as a wrack of a group.
 The premiere example given by Takasaki was to take an abelian group and define $a*b=2b-a$.
We will give many more examples later (mostly interested in the possibility of having shelves which are
not racks; e.g. Definition \ref{Definition 2.13}).

Definition \ref{Definition 2.2} describes properties of an individual magma $(X;*)$. It is also useful to 
consider subsets or submonoids of $Bin(X)$ satisfying the related conditions described in Definition 
\ref{Definition 2.3}.
\begin{definition}\label{Definition 2.3}
\begin{enumerate}
\item[(1)] 
We say that a subset $S \subset Bin(X)$ is a distributive set if all
pairs of elements $*_{\alpha},*_{\beta} \in S$ are right distributive,
 that is, $ (a*_{\alpha}b)*_{\beta}c= (a*_{\beta}c)*_{\alpha}(b*_{\beta}c)$ (we allow  $*_{\alpha}=*_{\beta}$).
\begin{enumerate}
\item[(i)] 
The pair $(X;S)$ is called a multi-shelf if $S$ is a distributive set. 
If $S$ is additionally a submonoid (resp. subgroup) of $Bin(X)$, we say that it is a distributive monoid (resp. group).
\item[(ii)] 
If $S \subset Bin(X)$ is a distributive set such that each $*$ in $S$ satisfies the idempotency condition, we call 
$(X;S)$ a multi-spindle. 
\item[(iii)] We say that $(X;S)$ is a multi-rack if $S$ is a distributive set, and  all elements of $S$ are invertible. 
\item[(iv)] We say that $(X;S)$ is a multi-quandle if $S$ is a distributive set, and 
elements of $S$ are invertible and satisfy the idempotency condition.
\item[(v)] We say that $(X;S)$ is a multi-kei if it is a multi-quandle with $**=*_0$ for any $*\in S$. 
Notice that if $*_1^2=*_0$ and $*_2^2=*_0$ then $(*_1*_2)^2=*_0$; more generally if 
$*_1^n=*_0$ and $*_2^n=*_0$ then $(*_1*_2)^n=*_0$. This follows from Proposition \ref{Proposition 2.8}.
\end{enumerate}
\item[(2)] We say that a subset $S \subset Bin(X)$ is an associative set if all     
pairs of elements $*_{\alpha},*_{\beta} \in S$ are associative with respect to each another, that is, 
$ (a*_{\alpha}b)*_{\beta}c= a*_{\alpha}(b*_{\beta}c)$.
\end{enumerate}
\end{definition}

\begin{proposition}\label{Proposition 2.4}
\begin{enumerate}
\item[(i)] If $S$ is a distributive set and $*\in S$ is invertible, then $S\cup \{\bar *\}$ is also a 
distributive set.
\item[(ii)] If $S$  is a distributive set and $M(S)$ is the monoid generated by $S$ then $M(S)$ is a 
distributive monoid.
\item[(iii)] If $S$  is a distributive set of invertible operations and $G(S)$ is the group generated by $S$, then 
$G(S)$ is a distributive group.
\end{enumerate}
\end{proposition}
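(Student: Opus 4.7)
For part (i), my plan is to derive each of the three new distributivity identities involving $\bar{*}$ by exploiting the existing identities for $*$, combined with a substitution trick that uses invertibility to cancel. For the identity $(a\bar{*}b)*_\beta c = (a*_\beta c)\bar{*}(b*_\beta c)$, I would start from the known $(a'*b)*_\beta c = (a'*_\beta c)*(b*_\beta c)$, substitute $a' = a\bar{*}b$ so that $a'*b = a$, and then compose with $\bar{*}(b*_\beta c)$ on the right. For the symmetric identity $(a*_\alpha b)\bar{*} c = (a\bar{*}c)*_\alpha(b\bar{*}c)$, I would instead substitute $a\bar{*}c$ for $a$ and $b\bar{*}c$ for $b$ in $(a*_\alpha b)*c = (a*c)*_\alpha(b*c)$ and then cancel $*c$ on the right by $\bar{*}c$. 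Self-distributivity of $\bar{*}$ with itself is handled by the same trick applied to self-distributivity of $*$.

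For part (ii), the plan is to isolate two composition lemmas. The \emph{left} lemma says that if $*_1$ and $*_2$ both distribute over $*_\beta$, then so does $*_1*_2$; this is the straightforward calculation
\[
(a(*_1*_2)b)*_\beta c = ((a*_1 b)*_2 b)*_\beta c = ((a*_1 b)*_\beta c)*_2(b*_\beta c) = ((a*_\beta c)*_1(b*_\beta c))*_2(b*_\beta c),
\]
which equals $(a*_\beta c)(*_1*_2)(b*_\beta c)$. The \emph{right} lemma says that if $*_\alpha$ distributes over both $*_1$ and $*_2$, then it distributes over $*_1*_2$, by an analogous unfolding. Iterating the left lemma on word length shows that every element of $M(S)$ distributes over each $*_\beta \in S$; iterating the right lemma on word length then shows that every element of $M(S)$ distributes over every element of $M(S)$.

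For part (iii), the plan is to simply combine (i) and (ii): apply (i) repeatedly to adjoin $\bar{*}$ for each invertible $* \in S$, producing a distributive set $S' = S \cup \{\bar{*} : *\in S\}$; then (ii) makes $M(S')$ a distributive monoid; and since $M(S')$ contains each generator together with its inverse, $M(S') = G(S)$, which is therefore a distributive group. I do not expect a genuine obstacle in any of the three parts, since everything reduces to term rewriting; the only place demanding care is the bookkeeping in (i), where one must verify that the single substitution-and-cancel pattern really covers all four new pairings ($\bar{*}$ on the left with old $*_\beta$, old $*_\alpha$ on the left with $\bar{*}$, and $\bar{*}$ with $\bar{*}$), rather than needing separate ad hoc manipulations.
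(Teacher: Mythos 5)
Your proposal is correct and follows essentially the same decomposition as the paper: your two substitution-and-cancel patterns in (i) are exactly the paper's Corollary 2.6(i)--(ii) (the first of which the paper packages as the statement that the inverse of a bijective magma homomorphism is again a homomorphism), your left and right composition lemmas in (ii) are the paper's Lemma 2.7(i)--(ii) verbatim, and (iii) is obtained by combining (i) and (ii) in both treatments. Two small remarks. First, for self-distributivity of $\bar *$ a \emph{single} application of either trick to $(a*b)*c=(a*c)*(b*c)$ only yields a mixed identity with $\bar *$ in one slot; you must first derive $(a\bar * b)*c=(a*c)\bar *(b*c)$ and then apply the second substitution pattern to \emph{that} identity --- this is exactly the two-step chain the paper uses in Corollary 2.6(iii), and your machinery covers it, but ``the same trick applied to self-distributivity of $*$'' slightly understates the bookkeeping. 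Second, your observation that iterating the two composition lemmas (outer fixed/inner composed, then inner fixed/outer composed) already delivers self-distributivity of every composite word is a genuine small streamlining: the paper instead verifies self-distributivity of $*_1*_2$ by a separate multi-line computation in Lemma 2.7(iii), which your induction renders redundant. (You should also note in passing that the identity $*_0\in M(S)$ is distributive with respect to everything, which handles the empty word.)
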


We divide our proof into three elementary but important lemmas.
\begin{lemma}\label{Lemma 2.5} Let $(X;*)$ be a magma and $f:X\to X$ a magma homomorphism (i.e. $f(x*y)=f(x)*f(y)$).
If $f$ is invertible (we denote $f^{-1}$ by $\bar f$) then $\bar f$ is also a magma homomorphism.
\end{lemma}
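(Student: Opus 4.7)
The plan is to verify the single identity $\bar f(x * y) = \bar f(x) * \bar f(y)$ for all $x, y \in X$, which is all that is required for $\bar f$ to be a magma homomorphism. The most natural route is to substitute, apply the hypothesis on $f$, and then cancel $f$ by applying $\bar f$.

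Concretely, I would fix $x, y \in X$ and set $u = \bar f(x)$, $v = \bar f(y)$, so that $f(u) = x$ and $f(v) = y$ by definition of the inverse. Using that $f$ is a magma homomorphism, I get
\[
f(u * v) = f(u) * f(v) = x * y.
\]
Applying $\bar f$ to both ends yields $u * v = \bar f(x * y)$, i.e.\ $\bar f(x) * \bar f(y) = \bar f(x * y)$, which is exactly the homomorphism condition for $\bar f$.

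There is essentially no obstacle here; the argument is the standard ``homomorphisms with set-theoretic inverses are isomorphisms'' calculation, and it uses nothing beyond the definition of $f^{-1}$ and the homomorphism property of $f$. The only thing worth flagging is that invertibility of $f$ is meant in the set-theoretic sense (a two-sided inverse as a map $X \to X$), so no compatibility of $\bar f$ with $*$ is assumed in advance — it is precisely what the lemma produces.
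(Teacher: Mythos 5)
Your proof is correct and is essentially identical to the paper's own argument: both set $u=\bar f(x)$, $v=\bar f(y)$, apply the homomorphism property of $f$ to get $f(u*v)=x*y$, and then apply $\bar f$ to conclude. No issues.
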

\begin{proof}
Our goal is to show that $\bar f(x*y)=\bar f(x)*\bar f(y)$. For this, let $\bar x = \bar f(x)$ and 
$\bar y = \bar f(y)$ (equivalently $f(\bar x)=x$ and $f(\bar y)=y$). Then, from $f(\bar x*\bar y)=f(\bar x)*f(\bar y)$ 
follows $f(\bar x*\bar y)= x*y$. Therefore, $\bar x*\bar y = \bar f(x*y)$ which gives $\bar f(x)*\bar f(y)=\bar f(x*y)$. 
\end{proof}

\begin{corollary}\label{Corollary 2.6} 
\begin{enumerate}
\item[(i)] If $*,*' \in Bin(X)$ and $*$ is invertible and (right) distributive with respect to $*'$, then 
$\bar *$ is (right) distributive with respect to $*'$.
\item[(ii)]  If $*,*' \in Bin(X)$, $*$ is invertible, and $*'$ is (right) distributive with respect to $*$, then 
$*'$ is (right) distributive with respect to $\bar *$.
\item[(iii)] If $(X;*)$ is a rack, then $(X;\bar *)$ is a rack.
\item[(iv)]
If $\{*',*\}$ is a distributive set and $*$ is invertible, then $\{*',*,\bar *\}$ is a distributive set.
\end{enumerate}
\end{corollary}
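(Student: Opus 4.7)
The plan is to treat (i) and (ii) separately, since they are mirror images rather than equivalents, and then deduce (iii) and (iv) as bookkeeping corollaries.

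For (i), I would reformulate the hypothesis in terms of adjoint maps: the identity $(a*b)*'c = (a*'c)*(b*'c)$ says that for each fixed $c$, the map $g = *'_c : X \to X$ is a magma homomorphism of $(X;*)$. The conclusion sought is $g(a\bar*b) = g(a)\bar*g(b)$. Setting $y = a\bar*b$, so that $y*b = a$, and applying $g$ yields $g(a) = g(y)*g(b)$; invertibility of $*$ in $Bin(X)$ then lets us solve for $g(y) = g(a)\bar*g(b)$. Note that, unlike in (ii), Lemma 2.5 is not the direct instrument here, since $g$ itself need not be invertible; the invertibility that is used is that of $*$.

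For (ii), the hypothesis that $*'$ is right distributive with respect to $*$ reads $*_c(a*'b) = *_c(a)*'*_c(b)$, i.e., $*_c$ is a magma endomorphism of $(X;*')$ for every $c$. Since $*\in Bin_{inv}(X)$, each $*_c$ is bijective with inverse $\bar*_c$, so Lemma 2.5, applied to the magma $(X;*')$ and the invertible endomorphism $*_c$, immediately tells us that $\bar*_c = *_c^{-1}$ is again an endomorphism of $(X;*')$. Unwrapping gives $(a*'b)\bar*c = (a\bar*c)*'(b\bar*c)$, as required.

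Parts (iii) and (iv) then follow by combining (i) and (ii). For (iii), a rack satisfies $*$ right distributive with itself, so (i) with $*'=*$ gives $\bar*$ right distributive with $*$, and (ii) applied with $\bar*$ in place of $*'$ gives $\bar*$ right distributive with itself; since $\bar*$ is invertible with inverse $*$, the pair $(X;\bar*)$ is a rack. For (iv), the five new ordered pairs obtained by adjoining $\bar*$ to $\{*',*\}$ — namely $(\bar*,*')$, $(*',\bar*)$, $(\bar*,*)$, $(*,\bar*)$, and $(\bar*,\bar*)$ — are dispatched respectively by (i), (ii), (i), (ii), and (iii). The only real obstacle is notational: because (i) and (ii) are mirror images, I would be careful to track which operation plays the role of $*$ and which plays $*'$ in each invocation.
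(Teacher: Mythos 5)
Your argument is correct and uses the same two techniques as the paper: one direction is obtained by applying Lemma \ref{Lemma 2.5} to the bijection $*_c$, and the other by applying the homomorphism $*'_c$ to the identity $(a\bar *b)*b=a$ and solving with $\bar *$; parts (iii) and (iv) are then assembled exactly as in the paper. The only discrepancy is one of convention: you read ``$*$ is distributive with respect to $*'$'' as $(a*b)*'c=(a*'c)*(b*'c)$, whereas the paper's proof shows it intends $(a*'b)*c=(a*c)*'(b*c)$, so your (i) is the paper's (ii) and vice versa. Since both directions are proved and the bookkeeping in (iii) and (iv) is consistent within your own labeling, the mathematics is complete.
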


\begin{proof}
(i) Because $(a*'b)*c= (a*c)*'(b*c)$, the map $*_c : X\to X $ is a $*'$-shelf homomorphism; thus
by Lemma \ref{Lemma 2.5}, $\bar *_c: X\to X $ is a $*'$-shelf homomorphism. The last property can be written 
as $(a*'b)\bar *c= (a \bar *c)*'(b \bar *c)$, that is, $\bar *$ is (right) distributive with respect to 
$*'$.\\
(ii)
 To prove the distributivity of $*'$ with respect to $\bar *$ we consider the formula that follows 
from the distributivity of $*'$ with respect to $*$:
$ ((a\bar * b)*b)*'c = ((a\bar * b)*'c)*(b*'c)$. This is equivalent to 
$ a*'c = ((a\bar * b)*'c)*(b*'c)$ and thus:
$$ (a*'c)\bar * (b*'c)= ((a\bar * b)*'c).$$
(iii) To see the (right) self-distributivity of $\bar *$ we notice that the (right) self-distributivity of 
$*$ gives, by (ii), the distributivity of $*$ with respect to $\bar *$. Thus, 
 $*_c$ is a $\bar *$-shelf homomorphism so, by Lemma \ref{Lemma 2.5}, $\bar *_c$ is a $\bar *$-shelf homomorphism 
which gives the (right) self-distributivity of $\bar *$.\\
(iv) follows from (i), (ii), and (iii).
\end{proof}
Proposition \ref{Proposition 2.4}(i) follows from Corollary \ref{Corollary 2.6}. Part (ii) of 
Proposition \ref{Proposition 2.4} follows from the following elementary lemma, and (iii) is a combination of (i) and (ii).

\begin{lemma}\label{Lemma 2.7}
\begin{enumerate}
\item[(i)] Let $*,*_1, *_2\in Bin(X)$ and let $*$ be (right) distributive with respect to $*_1$ and $*_2$. Then 
$*$ is (right) distributive with respect to $*_1*_2$.
\item[(ii)]  Let $*,*_1, *_2\in Bin(X)$ and let $*_1$  and $*_2$ be  (right) distributive with respect to $*$. Then 
 $*_1*_2$ is (right) distributive with respect to $*$.
\item[(iii)]
If $\{S,*_1,*_2\}$ is a distributive set, then $\{S,*_1,*_2,*_1*_2\}$ is also a distributive set.
\end{enumerate}
\end{lemma}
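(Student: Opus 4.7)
The plan is that all three parts are elementary manipulations from the definition $a*_1*_2 b = (a*_1 b)*_2 b$ together with the given distributivity hypotheses; no cleverness is required, only the careful bookkeeping of when each hypothesis is invoked. I would state (i) and (ii) as symmetric calculations and then deduce (iii) as a purely formal consequence.

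For part (i), I want to verify $(a(*_1*_2)b)*c = (a*c)(*_1*_2)(b*c)$. Unfolding the left-hand side gives $((a*_1 b)*_2 b)*c$. First apply the distributivity of $*$ with respect to $*_2$ with $b$ in the role of the right argument, rewriting this as $((a*_1 b)*c)*_2 (b*c)$; then apply the distributivity of $*$ with respect to $*_1$ to the inner factor, obtaining $((a*c)*_1 (b*c))*_2 (b*c)$, which is exactly $(a*c)(*_1*_2)(b*c)$ by definition. For part (ii), I want $(a*b)(*_1*_2)c = (a(*_1*_2)c)*(b(*_1*_2)c)$. Unfolding the left-hand side yields $((a*b)*_1 c)*_2 c$. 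Apply distributivity of $*_1$ with respect to $*$ to get $((a*_1 c)*(b*_1 c))*_2 c$, then apply distributivity of $*_2$ with respect to $*$ to get $((a*_1 c)*_2 c) * ((b*_1 c)*_2 c)$, which matches the right-hand side.

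For part (iii), I would run through the four distributivity relations that $\{S,*_1,*_2,*_1*_2\}$ requires beyond those already present in $\{S,*_1,*_2\}$. Fix any $*_{\alpha} \in S \cup \{*_1,*_2\}$. Since $*_{\alpha}$ is distributive with respect to both $*_1$ and $*_2$ by hypothesis, part (i) gives that $*_{\alpha}$ is distributive with respect to $*_1*_2$. Conversely, since $*_1$ and $*_2$ are each distributive with respect to $*_{\alpha}$ by hypothesis, part (ii) gives that $*_1*_2$ is distributive with respect to $*_{\alpha}$. It remains to verify that $*_1*_2$ is self-distributive. Applying (i) with $* = *_1$ shows $*_1$ is distributive with respect to $*_1*_2$, and likewise $*_2$ is distributive with respect to $*_1*_2$; feeding these two facts into (ii) (with $*$ now playing the role of $*_1*_2$) yields that $*_1*_2$ is distributive with respect to itself.

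The only ``obstacle'' is a notational one: both (i) and (ii) must invoke the defining equation $a*_1*_2 b = (a*_1 b)*_2 b$ twice and the distributivity hypotheses in a specific order, and in (iii) one has to recognize that the self-distributivity of $*_1*_2$ is not automatic from its distributivity with $*_1$ and $*_2$ but requires a second application of (i) followed by (ii). Once that double application is noticed, the lemma is immediate.
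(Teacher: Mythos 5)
Your proof is correct. Parts (i) and (ii) coincide with the paper's argument: the same two-step unfolding of $a*_1*_2b=(a*_1b)*_2b$ followed by the two distributivity hypotheses in the same order. The only genuine divergence is in (iii). Both you and the paper observe that, thanks to (i) and (ii), the only relation not yet accounted for is the self-distributivity of $*_1*_2$; but the paper then establishes $(a*_1*_2b)*_1*_2c=(a*_1*_2c)*_1*_2(b*_1*_2c)$ by a direct five-line expansion, whereas you derive it formally: apply (i) with $*=*_1$ and with $*=*_2$ (legitimate, since a distributive set is required to satisfy the relation also for $*_\alpha=*_\beta$, so $*_1$ and $*_2$ are each distributive with respect to both $*_1$ and $*_2$) to conclude that $*_1$ and $*_2$ are each distributive with respect to $*_1*_2$, and then feed exactly these two facts into (ii) with $*_1*_2$ playing the role of $*$. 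This bootstrap is sound --- the hypotheses of (ii) are literally the conclusions of the two applications of (i) --- and it buys you something: it replaces the paper's error-prone juggling of nested parentheses with a two-line formal deduction, at the cost of having to notice, as you did, that self-distributivity of the composite is not an instance of either (i) or (ii) alone but needs the composition of the two.
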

\begin{proof} (i) We have $(a*_1*_2b)*c= ((a*_1b)*_2b)*c=((a*c)*_1(b*c))*_2(b*c)=(a*c)*_1*_2((b*c)$, as needed.\\
(ii) We have $(a*b)*_1*_2c= ((a*b)*_1c)*_2c= ((a*_1c)*_2c)*((b*_1c)*_2c)= (a*_1*_2c)*(b*_1*_2c)$, as needed.\\
(iii) Because of (i) and (ii) we have to only prove the (right) self-distributivity of $*_1*_2$. 
We have $$(a*_1*_2b)*_1*_2c=(((a*_1b)*_2b)*_1c)*_2c= (((a*_1b)*_1c)*_2(b*_1c))*_2c=$$
 $$(((a*_1c)*_1(b*_1c))*_2c)*_2((b*_1c)*_2c)=$$
$$ ((a*_1c)*_2c)*_1((b*_1c)*_2c))*_2((b*_1c)*_2c)) = (a*_1*_2c)*_1*_2(b*_1*_2c).$$
This proves the (right) self-distributivity of $*_1*_2$.
\end{proof}

Our monoidal structure of $Bin(X)$ behaves well with respect to (right) distributivity, as 
demonstrated by Proposition \ref{Proposition 2.4}. It is interesting to notice that the analogue of 
Proposition \ref{Proposition 2.4} does not hold for associative sets. For example, if $(X;*)$ is a group,
then, $\bar *$ is seldom associative. 
Similarly, it very seldom happens that if $\{*_1,*_2\}$ is an associative set then the operation $*_1*_2$ 
is associative. 
\\ \ \\
\subsection{When is a distributive monoid commutative?}

Soon after I gave the definition of a distributive submonoid of $Bin(X)$ Michal Jablonowski, a graduate student 
at Gda\'nsk University, noticed that any distributive monoid whose elements are idempotent operations
is commutative. We have:
\begin{proposition}\label{Proposition 2.8}
\begin{enumerate}
\item[(i)]
 Consider $*_{\alpha},*_{\beta}\in Bin(X)$ such that $*_{\beta}$ is idempotent ($a*_{\beta}a=a$) and
distributive with respect to $*_{\alpha}$, then $*_{\alpha}$ and $*_{\beta}$ commute. In particular:
\item[(ii)] If $M$ is a distributive monoid and $*_{\beta}\in M$ is an idempotent operation, then $*_{\beta}$ 
is in the center of $M$.
\item[(iii)] A distributive monoid whose elements are idempotent operations is commutative.
\end{enumerate}
\end{proposition}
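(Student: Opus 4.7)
The plan is to prove part (i) by a single clever substitution in the distributivity axiom, and then to obtain (ii) and (iii) as immediate corollaries.

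For (i), I would start from the hypothesis that $*_\beta$ is right distributive with respect to $*_\alpha$, namely
$$(a *_\alpha b) *_\beta c = (a *_\beta c) *_\alpha (b *_\beta c),$$
and specialize to $c = b$. The inner expression $b *_\beta b$ on the right then collapses to $b$ by idempotency of $*_\beta$, leaving
$$(a *_\alpha b) *_\beta b = (a *_\beta b) *_\alpha b.$$
Rewriting both sides in the monoidal notation of Proposition \ref{Proposition 2.1}, the left-hand side is $a(*_\alpha *_\beta)b$ and the right-hand side is $a(*_\beta *_\alpha)b$. Since this holds for all $a,b \in X$, we obtain $*_\alpha *_\beta = *_\beta *_\alpha$ in $Bin(X)$, which is the claimed commutativity.

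Part (ii) is then immediate: if $M$ is a distributive monoid and $*_\beta \in M$ is idempotent, then for every $*_\alpha \in M$ the distributivity condition of $*_\beta$ with respect to $*_\alpha$ holds (by definition of a distributive set), so (i) applies and $*_\alpha *_\beta = *_\beta *_\alpha$, placing $*_\beta$ in the center of $M$. Part (iii) follows at once: if every element of $M$ is idempotent, then by (ii) every element is central, hence $M$ is commutative.

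There is no real obstacle here; the entire content of the proposition is the observation that substituting $c=b$ and invoking idempotency forces distributivity to become commutativity. The only care needed is in keeping straight which operation is distributive \emph{with respect to} which, since in the statement of (i) it is $*_\beta$ that is distributive over $*_\alpha$, yet the conclusion is the symmetric statement $*_\alpha *_\beta = *_\beta *_\alpha$.
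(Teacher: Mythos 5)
Your proof is correct and is essentially identical to the paper's: both set $c=b$ in the distributivity identity $(a*_\alpha b)*_\beta c=(a*_\beta c)*_\alpha(b*_\beta c)$, use idempotency to collapse $b*_\beta b$, and read off $a(*_\alpha *_\beta)b=a(*_\beta *_\alpha)b$. Parts (ii) and (iii) follow exactly as you say.
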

\begin{proof} We have: $(a*_{\alpha}b)*_{\beta}b \stackrel{distrib}{=} (a*_{\beta}b)*_{\alpha}(b*_{\beta}b)
\stackrel{idemp}{=}
(a*_{\beta}b)*_{\alpha}b$.
\end{proof}
A few months later Agata Jastrz{\c e}bska (also  a graduate student at Gda\'nsk University), 
checked that any distributive group in $Bin_{inv}(X)$ for  $|X|\leq 5$  is 
commutative. Finally, in July of 2011 Maciej Mroczkowski (attending my series of talks at Gda\'nsk University) 
constructed noncommutative distributive submonoids of $Bin(X)$, the smallest for $|X|=3$. Here is Mroczkowski's 
construction.
\begin{construction}\label{Construction 2.9}
Consider a pair of sets $X\supset A$ and the set of all retractions from $X$ to $A$ (denoted by
$R(X,A)$. Then the set of all shelfs $(X;*_r)$ with $r\in R(X,A)$ and $a*_rb=r(b)$ forms a distributive subsemigroup
of $Bin(X)$ which is non-abelian for $|X|> |A|>1$. This semigroup, denoted $SR(X,A)$, has a  presentation:
$\{R\ | \ *_{r_{\alpha}}*_{r_{\beta}}= *_{ r_{\beta}}\}$ and is clearly not commutative.
 Notice that it is a semigroup with a left trivial operation.

The simplest example is given by $X=\{b,a_1,a_2\}$ and $A=\{a_1,a_2\}$; then $SR(X,A)$ has 2 elements
$*_{r_1}$ and $*_{r_2}$ with $r_1(b)=a_1$ and $r_2(b)=a_2$.

The choices above are related to the following:\\
(i) $(X;*_g)$ with $a*_gb=g(b)$ is a shelf if and only if $g^2=g$.\\
(ii) Two operations $*_{g_1}$ and $*_{g_2}$ are distributive with respect to each other iff 
$g_1g_2=g_2$ and $g_2g_1=g_1$, since:\\
$(a*_{g_1}b)*_{g_2}c=g_2(c)$ and $(a*_{g_2}c)*_{g_1}(b*_{g_2}c)=g_1(b*_{g_2}c)=g_1(g_2(c))$. \\
(iii) $g_1$ and $g_2$ form a distributive set if $g_1(X)=g_2(X)$ and $g_1$ and $g_2$ are retractions.

$SR(X,A)$ is a distributive semigroup.  If we add $*_0$ to it we obtain a distributive monoid $MR(X,A)$.
\end{construction}

It still remains an open problem whether an invertible operation is in the center 
of a distributive submonoid of $Bin(X)$,  or whether a distributive subgroup of $Bin(X)$ 
is abelian. With relation to these questions, we 
propose a few problems for a computer savvy student, possibly for her/his senior thesis or master degree:
\begin{problem}
\begin{enumerate}
\item[(i)] For small $X$, say $|X| \leq 6$, find all distributive submonoids of $Bin(X)$.
In fact, such monoids form a poset with respect to inclusion, so it is sufficient to find all
maximal distributive monoids.
\item[(ii)]  Consider only distributive subgroups of $Bin(X)$.
As in (i) find all maximal subgroups. Are they all abelian?
\item[(iii)]  Now assume that we have a distributive monoid of idempotent operations 
(not necessarily invertible). Again find maximal distributive monoids in this category.
It is interesting that, for $|X| =2$, we have four-spindle structures and they form
a unique maximal distributive submonoid of 4 elements (related to the two element Boolean algebra).
\item[(iv)] Consider now submonoids of $Bin(X)$ such that their elements satisfy all quandle conditions.
Find all maximal distributive subgroups of  $Bin(X)$  in this category. 
This is stronger than classifying small quandles since we build posets of them.
\end{enumerate}
\end{problem}
For $|X| = 6$ the problems above may test the strength of a computer and the quality of the algorithm.
For $|X| = 5$ it is feasible and
for $|X| = 4$ even a small computer and not that efficient program should work and a solution
will still be of great interest.

\subsection{Every abelian group is a distributive subgroup of $Bin(X)$ for some $X$}\ 
 
In the previous subsection we stressed that the question of whether every distributive 
subgroup of $Bin(X)$ is abelian is open; it is easy, however, to construct any abelian group 
 as a distributive subsemigroup of some $Bin(X)$.  The following proposition describes 
an elementary generalization of this: 

\begin{proposition}\label{Proposition 2.11} 
Let $X$ be a semigroup. Consider a map $\tau : X \to Bin(X)$ given by $x \tau(a)y = xa$. Then:
\begin{enumerate} 
\item[(i)] $\tau$ is a homomorphism of semigroups.
\item[(ii)] If $1_r$ is a right unit of $X$ (i.e. $x1_r=x$) then $\tau (1_r) = *_0$.
\item[(iii)] If $X$ is a group, or more generally a semigroup with the property\footnote{Functions 
satisfying this property are called functionally equal. The property 
holds, for example, for for a semigroup with the left cancellation property $xa=xb \Rightarrow a=b$,
or an abelian semigroup whose elements are all idempotent (if $xa=xb$ for every $x$, then $a=aa=ab=ba=bb=b$).} 
that if $xa = xb$
for every $x\in X$ then $a=b$, 
then $\tau$ is a monomorphism. 
\item[(iv)] For any function $f:X\to X$ we define a shelf $(X;*_f)$ by $a*_fb = f(a)$ (this is a
rack if $f$ is invertible and a spindle if $f=Id_X$). Then $\{*_{f_1},*_{f_2}\}$ forms a
distributive set iff $f_1$ and $f_2$ commute.
\item[(v)] If  $X$ is a commutative semigroup  such that if $xa = xb$ for any $x$ then $a=b$, then $X$ embeds as 
a distributive subsemigroup in $Bin(X)$.
\end{enumerate}
\end{proposition}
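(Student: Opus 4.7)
The plan is to verify (i)--(iv) directly, and then derive (v) by combining them.

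Part (i) reduces to a one-line computation using the monoid law of Proposition~\ref{Proposition 2.1}: $x(\tau(a)\tau(b))y = (x\tau(a)y)\tau(b)y = (xa)b = x(ab) = x\tau(ab)y$, the middle equality being associativity in $X$. Part (ii) is immediate: $x\tau(1_r)y = x\cdot 1_r = x = x*_0 y$. Part (iii) is the contrapositive of the given hypothesis: $\tau(a)=\tau(b)$ unwraps to $xa=xb$ for all $x\in X$, and the ``functional equality implies equality'' assumption then forces $a=b$.

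For (iv), I would expand both sides of the distributivity axiom. Since $*_f$ ignores its second argument, $(a*_{f_1}b)*_{f_2}c$ collapses to $f_2(f_1(a))$, while $(a*_{f_2}c)*_{f_1}(b*_{f_2}c)$ collapses to $f_1(f_2(a))$. Hence mutual distributivity of $*_{f_1}$ and $*_{f_2}$ is equivalent to $f_1f_2=f_2f_1$; the case $f_1=f_2$ of the same computation confirms that each $*_f$ is already a shelf on its own.

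For (v), the key observation is that $\tau(a)=*_{R_a}$, where $R_a\colon X\to X$ denotes right multiplication by $a$, so (iv) is directly applicable. Commutativity of $X$ gives $R_a R_b(x) = xba = xab = R_b R_a(x)$, so $\tau(X)$ is a distributive set; by (i) it is a subsemigroup of $Bin(X)$, and by (iii) $\tau$ is injective, producing the desired embedding. I do not anticipate any real obstacle -- every part unravels by substituting definitions -- the only subtlety being that, since (v) is phrased without a unit, one gets a subsemigroup rather than a submonoid; adjoining a right unit would additionally realize $*_0$ via (ii) and upgrade the conclusion accordingly.
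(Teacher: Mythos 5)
Your proposal is correct and follows essentially the same route as the paper: direct substitution of definitions for (i)--(iv), and for (v) the identification $\tau(a)=*_{R_a}$ with right multiplication so that (iv) plus commutativity of $X$ yields the distributive set, with injectivity from (iii). Your explicit remark that the $f_1=f_2$ case of the computation in (iv) gives self-distributivity of each $*_f$, and the closing observation about subsemigroup versus submonoid, are accurate refinements of what the paper leaves implicit.
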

\begin{proof} (i) We have $x\tau(ab)y= xab$ and $x\tau(a)\tau(b)y=(x\tau(a)y)\tau(b)y= xa\tau(b)y=xab$.\\
(ii) $x\tau(1_r)y=x1_r=x$ thus $\tau(1_r)=*_0$.\\ 
(iii) If $\tau(a)= \tau(b)$, then for all $x$ we have $xa=xb$. Thus, by our property, $a=b$ and $\tau$ is a monomorphism.\\
(iv) We have: $(a*_{f_1}b)*_{f_2}c= f_2(a*_{f_1}b)=f_2f_1(a)$,\\
$(a*_{f_2}c)*_{f_1}(b*_{f_2}c)= f_1((a*_{f_2}c)=f_1f_2(a)$. Thus, right distributivity holds 
iff  $f_1$ and $f_2$ commute. \\
(v) With our assumption $\tau$ is a monomorphism, and by (iv) its image is a distributive semigroup
 (compare Proposition \ref{Proposition 7.2} where we show that commutativity of $X$ is not needed if we 
replace distributivity by chronological-distributivity).
\end{proof}

\subsection{Multi-shelf homomorphism}

Homomorphism of multi-shelves is a special case of a homomorphism of universal algebras (heterogeneous 
two-sorted algebras). Concretely,
consider two multi-shelves $(X_1;S_1)$ and $(X_2;S_2)$ and a map $h: S_1 \to S_2$. 
We say that $f: X_1 \to X_2$ is a multi-shelves homomorphism if for any $* \in S_1$ we have 
$f(a*b)= f(a)h(*)f(b)$. 
\begin{proposition}\label{Proposition 2.12} 
Let $(X;S)$ be a multi-shelf and $* \in S$. Then for any $c\in X$ the adjoint map $*_c: X \to X$ is a multi-shelf 
endomorphism of $X$ (with $h=Id: S \to S$).
\end{proposition}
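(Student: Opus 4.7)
The plan is to reduce the claim to the defining right-distributivity of the distributive set $S$. First I would unpack what the conclusion asks: to say that $*_c : X \to X$ is a multi-shelf endomorphism with $h = \mathrm{Id}_S$ means that, for every $*' \in S$ and every $a,b \in X$,
\[
*_c(a *' b) \;=\; *_c(a) \,h(*')\, *_c(b) \;=\; *_c(a) *' *_c(b).
\]
Rewriting both sides via the definition $*_c(x) = x * c$, this is exactly the identity
\[
(a *' b) * c \;=\; (a * c) *' (b * c).
\]

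Second, I would invoke the hypothesis that $(X;S)$ is a multi-shelf, meaning by Definition~\ref{Definition 2.3}(1)(i) that $S \subset Bin(X)$ is a distributive set. Since both $*$ and $*'$ belong to $S$ (with $*_{\alpha} = *'$ and $*_{\beta} = *$ in the notation of Definition~\ref{Definition 2.3}(1)), the distributive set condition delivers precisely the displayed equation; the case $*' = *$ is allowed, corresponding to the right self-distributivity of $*$ itself.

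The argument is therefore a direct reading off of the definitions, and there is no genuine obstacle; the only point worth flagging is the orientation convention, namely that one must check \emph{right} distributivity (applying $*c$ on the outside), which matches the right self-distributivity built into the notion of a shelf and of a distributive set throughout Section~2. Since $c \in X$ and $*' \in S$ were arbitrary, $*_c$ intertwines every operation of $S$ on the nose, i.e.\ it is a multi-shelf endomorphism with $h = \mathrm{Id}_S$, as claimed.
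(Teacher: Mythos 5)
Your argument is correct and is essentially identical to the paper's proof: both unwind $*_c(a*'b)=(a*'b)*c$ and appeal directly to the right-distributivity condition defining a distributive set, with the case $*'=*$ covered since the definition allows $*_\alpha=*_\beta$. Nothing is missing.
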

\begin{proof}
The map is a homomorphism because, for any $*_{\alpha}\in S$, from  right distributivity we have:\\
$*_c(a*_{\alpha}b)=(a*_{\alpha}b)*c= (a*c)*_{\alpha}(b*c)= *_c(a)*_{\alpha}*_c(b).$ 
\end{proof}

\subsection{Examples of shelves and multi-shelves from a group}\

Consider the three classical classes of quandles:\ 
the trivial quandles, the conjugate quandles, and the core quandles. 
They have (also classical) generalizations (e.g. \cite{Joy,A-G}), or we can say 
deformations, important for us because they also produce interesting shelves 
which are often not quandles or racks, and
lead to interesting families of multi-shelves.
\begin{definition}\label{Definition 2.13} Let $G$ be a group and $h: G \to G$ a group homomorphism.
Then we define three classes of spindles with $(G,*_h)$ as follows:\\
     (i) $a*_hb = h(ab^{-1})b$;\\
    (ii) $a*_hb =  h(b^{-1}a)b$;\\
   (iii) $a*_hb = h(ba^{-1})b$, here we assume that $h^2 = h$.
\end{definition}
   We comment on each class below:\\
 (i) $(G,*_h)$ is a quandle iff  $h$ is invertible, and for $h = Id$ it is a trivial quandle.
If $G$ is an abelian group we obtain an Alexander spindle (Alexander quandle for $h$ invertible);
in an additive convention we write $a *_h b = h(a)-h(b) + b = (1- h)(b) + h(a)$.\\
(ii) $(G,*_h)$ is a quandle iff  $h$ is invertible, and for $h = Id$ we obtain the conjugacy
quandle ($a*b=b^{-1}ab$). If $G$ is an abelian group we obtain an Alexander spindle, the same as in case (i).\\
(iii) We need $h^2 = h$ for right self-distributivity, as the following calculation demonstrates:
 $$(a*_hb)*_hc=(h(ba^{-1})b)*_hc = h(c(h(ba^{-1})b)^{-1})c= h(cb^{-1})h^2(ab^{-1})c \stackrel{h^2=h}{=}$$
$$ h(cb^{-1} ab^{-1} )c$$
$$(a*_h c)*_h (b*_h c) = h((b*_h c)(a*_h c)^{-1})(b*_h c)=h(h(cb^{-1})c(h(ca^{-1} )c)^{-1}))h(cb^{-1})c {=}$$
     $$  h^2(cb^{-1} )h(c)h(c^{-1} )h^2 (ac^{-1} )h(cb^{-1} )c = h^2 (cb^{-1} ac^{-1} )h(cb^{-1} )c \stackrel{h^2=h}{=}$$
                                      $$h(cb^{-1} ab^{-1} )c.$$
    Because of the condition $h^2 = h$, our spindle is a quandle only if $h = Id$,
in which case we obtain a core quandle ($a*b = ba^{-1} b$).\\
It is interesting to compose $*_h*_h$ in (iii), as we obtain example (i). We
can interpret this by saying that $*_h$ from (i), for $h^2 = h$ has a square root. One
can also check that for (iii) $*_h^3 = *_h$, thus the monoid in $Bin(X)$ generated by $*_h$
is the three element cyclic monoid $\{*_h |\  *_h^3 = *_h \}$.
We have: $a*_h^3b= ((a*_hb)*_hb)*_hb= ((h(ba^{-1})b)*_hb)*_hb=(h(bb^{-1}h(ab^{-1})b))*_hb =
(h^2(ab^{-1})b)*_hb \stackrel{h^2=h}{=} (h(ab^{-1})b)*_hb= h(bb^{-1}h(ba^{-1})b=h^2(ba^{-1})b
\stackrel{h^2=h}{=} h(ba^{-1})b= a*_hb.$ 
\\
    Let us go back to case (ii):\\
 We check below that $*_h$ given by $a *_h b = h(b^{-1} a)b$ is right self-distributive. 
Thus by Proposition \ref{Proposition 2.4}(ii) the monoid generated by $*_h$ is a distributive monoid; 
however $*_{h_1}$ and $*_{h_2}$ are seldom right distributive as the calculation below shows (proving 
also distributivity for $h_1=h_2$):
           $$ (a *_{h_1} b) *_{h_2} c = (h_1 (b^{-1} a)b) *_{h_2} c = h_2 (c^{-1} (h_1 (b^{-1} a)b)c =$$
                              $$ h_2 (c^{-1} )h_2 h_1 (b^{-1} a)h_2 (b)c$$
               $$(a *_{h_2} c) *_{h_1} (b *_{h_2} c) = (h_2 (c^{-1} a)c) *_{h_1} (h_2 (c^{-1} b)c) =$$
$$h_1 (h_2 (c^{-1} b)c)^{-1} h_2 (c^{-1} a)c)h_2 (c^{-1} b)c =
h_1 (c^{-1} )h_1 h_2 (b^{-1} c)h_{1} h_2 (c^{-1} a)h_1 (c)h_2 (c^{-1} b)c =$$
                        $$  h_1 (c^{-1} )h_1 h_2 (b^{-1} a)h_1 (c)h_2 (c^{-1})h_2( b)c.$$
 Again back in case (i) ($a *_h b = h(ab^{-1} )b$) we get:
                     $$(a*_{h_1}b)*_{h_2}c = (a *_{h_2} c) *_{h_2 h_1 h_2^{-1}} (b *_{h_2} c).$$
In particular, $*_{h_1}$ and $*_{h_2}$ are right distributive if the functions commute
($h_1 h_2 = h_2 h_1$).
The last equation can be interpreted as twisted distributivity, for $G$-families of quandles,  
the concept developed by Ishii, Iwakiri, Jang, and Oshiro \cite{Is-Iw,Ca-Sa,IIJO}.

In the next few sections we  compare associativity and distributivity in developing homology theory.
In Section 3 we recall the basic notions of a chain complex, homology, and a chain homotopy, in 
order to make this paper accessible to non-topologists. We also recall the notion of a presimplicial and 
simplicial module, the basic concepts that are not familiar to nonspecialists.

\section{Chain complex, 
homology, and chain homotopy}

Let $\{C_n\}_{n\in Z}$ be a graded abelian group (or an $R$-module\footnote{For simplicity we work mostly 
with abelian groups, i.e. $\Z$-modules, but we could also assume that we work with $R$-modules, where 
$R$ is a ring with identity.}). A chain complex ${\mathcal C}= \{C_n,\partial_n\}$
is a sequence of homomorphisms $\partial_n: C_n \to C_{n-1}$ such that
$\partial_{n-1}\partial_{n}=0$ for any $n$. 
So $\mathrm{Im}(\partial_{n+1}) \subset \mathrm{Ker}(\partial_{n})$, and the quotient group
$\frac{\mathrm{Ker}(\partial_{n})}{\mathrm{Im}(\partial_{n+1})}$ is called the $n$th homology of a chain
complex ${\mathcal C}$, and denoted by $H_n({\mathcal C})$. Elements of $\mathrm{Ker}(\partial_{n})$ are called
n-cycles, and we write $Z_n=\mathrm{Ker}(\partial_{n})$, and elements of $\mathrm{Im}(\partial_{n+1})$ are called
n-boundaries and we write $B_n=\mathrm{Im}(\partial_{n+1})$ .

A map of chain complexes $f: {\mathcal C}' \to {\mathcal C}$ is a collection of group homomorphisms 
$f_n: C'_n \to C_n$ such that all squares in the diagram commute, that is, $ f_{n-1}\partial'_n= \partial_{n}f_n$.
A chain map induces a map on homology $f_*: H_n( {\mathcal C}') \to H_n( {\mathcal C})$.

One important and elementary tool we use in the paper is a {\it chain homotopy}, so we recall the notion: 
\begin{definition}\label{Definition 3.1}
Two chain maps $f,g : {\mathcal C}' \to {\mathcal C}$
are chain homotopic if there is a degree 1 map $h: C' \to C$
(that is $h_i: C'_i \to C_{i+1}$) such that
$$f-g = {\partial_{i+1}}h_i + h_{i-1}\partial'_i.$$
\end{definition}
The importance of chain homotopy is given by the following classical result:
\begin{theorem} If two chain maps $f$ and $g$ are chain homotopic, 
then they induce the same homomorphism of homology
$f_* =g_*: H({\mathcal C}') \to H({\mathcal C})$.
In particular, if ${\mathcal C}'= {\mathcal C}$, $f=Id$, and $g$ is the zero map, then  
the chain complex ${\mathcal C}$ is acyclic, that is $H_n({\mathcal C})=0$ for any $n$.
\end{theorem}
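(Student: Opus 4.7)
The plan is to show that any two chain homotopic maps agree on homology by a direct computation, and then derive the acyclicity statement as an immediate corollary.

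First I would pick a homology class $[z] \in H_n(\mathcal{C}')$ represented by an $n$-cycle $z \in Z_n(\mathcal{C}')$, so $\partial'_n z = 0$. Applying the chain homotopy identity at level $n$, I get
\[ (f_n - g_n)(z) = \partial_{n+1} h_n(z) + h_{n-1} \partial'_n(z) = \partial_{n+1} h_n(z), \]
since the second summand vanishes on cycles. Hence $(f_n - g_n)(z) \in B_n(\mathcal{C}) = \mathrm{Im}(\partial_{n+1})$, which means $[f_n(z)] = [g_n(z)]$ in $H_n(\mathcal{C})$. Since this holds for every cycle representative, the induced maps satisfy $f_* = g_*$ on $H_n$ for every $n$.

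For the second assertion, I would specialize to $\mathcal{C}' = \mathcal{C}$, $f = \mathrm{Id}$, and $g = 0$. The first part gives $\mathrm{Id}_* = 0_*$ as endomorphisms of $H_n(\mathcal{C})$. But $\mathrm{Id}_*$ is clearly the identity of $H_n(\mathcal{C})$ (a chain map sends cycles to cycles and boundaries to boundaries functorially), and $0_*$ is the zero map. Therefore every element of $H_n(\mathcal{C})$ equals zero, so $H_n(\mathcal{C}) = 0$ for all $n$, i.e., $\mathcal{C}$ is acyclic.

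There is essentially no obstacle here; the only subtle point is making sure the homotopy identity is applied at the correct degree and that one really needs $z$ to be a cycle in order to kill the $h_{n-1} \partial'_n$ term. Everything else is formal. I would keep the write-up to just these two short paragraphs and avoid invoking anything beyond the definitions of chain map, cycle, boundary, and chain homotopy recalled just above the theorem.
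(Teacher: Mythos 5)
Your argument is correct and complete: it is the standard proof that a chain homotopy forces $f_*=g_*$ (evaluate the homotopy identity on a cycle $z$, note $h_{n-1}\partial'_n z=0$, conclude $f_n(z)-g_n(z)\in \mathrm{Im}(\partial_{n+1})$), followed by the immediate specialization $\mathrm{Id}_*=0_*$ giving acyclicity. The paper states this theorem as a classical result and gives no proof of its own, so there is nothing to compare against beyond confirming that your write-up is exactly the expected argument.
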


\subsection{Presimplicial module and Simplicial module}
It is convenient to have the following terminology,  whose usefulness is visible in the next sections and 
which takes into account the fact that, in most homology theories, the boundary operation
$\partial_n:C_n \to C_{n-1}$ can be decomposed as an alternating sum of {\it face maps} $d_i:C_n \to C_{n-1}$.
Often we also have {\it degeneracy} maps $s_i:C_n \to C_{n+1}$. Formal definitions mostly follow \cite{Lod}.

\begin{definition}\label{Definition 3.3}\  
\begin{enumerate}
\item[(Sim)]
A simplicial module $(C_n,d_i,s_i)$, over a ring $R$, is a collection of
$R$-modules $C_n$, $n\geq 0$, together with face maps $d_i:C_n\to C_{n-1}$ and degenerate maps
$s_i: C_n\to C_{n+1}$, $0\leq i \leq n$, which satisfy the following properties:
$$ (1) \ \ \  d_id_j = d_{j-1}d_i\ for\ i<j. $$
$$(2)\ \ \ s_is_j=s_{j+1}s_i,\ \ 0\leq i \leq j \leq n, $$
$$ (3) \ \ \ d_is_j= \left\{ \begin{array}{rl}
 s_{j-1}d_i &\mbox{ if $i<j$} \\
s_{j}d_{i-1} &\mbox{ if $i>j+1$}
       \end{array} \right.
$$
$$ (4) \ \ \ d_is_i=d_{i+1}s_i= Id_{C_n}. $$

\item[(Presim)] 
$(C_n,d_i)$ satisfying (1) is called a {\it presimplicial module} and leads to the chain complex
$(C_n,\partial_n)$ with $\partial_n = \sum_{i=0}^n(-1)^id_i$.

\item[(W)] A {\it weak simplicial module} $(M_n,d_i,s_i)$ satisfies conditions (1)-(3) and a weaker condition
 in place of condition (4):\\
(4') \ \ \ $d_is_i=d_{i+1}s_i$.
\item[(VW)] A {\it very weak simplicial module} $(M_n,d_i,s_i)$ satisfies conditions (1)-(3). 
\end{enumerate}
\end{definition}

We defined weak and very weak simplicial modules motivated by homology of distributive structures 
(as it will be clear later, Proposition \ref{Proposition 6.4}).
We use the terms weak and very weak simplicial modules as the terms pseudo and almost simplicial modules are 
already in use\footnote{According to \cite{Fra}, a pseudo-simplicial module $(M_n,d_i,s_i)$ satisfies only
conditions (1),(3),(4) of Definition \ref{Definition 3.3} \cite{Ti-Vo,In}. An  almost-simplicial module satisfies
conditions (1)-(4) of Definition \ref{Definition 3.3} except $s_is_i= s_{i+1}s_i$. A pseudo-simplicial
module satisfies the Eilenberg-Zilber Theorem described in \cite{Fra1} and proved in \cite{In}.}.

\subsection{Subcomplex of degenerate elements}

Consider a graded module $(C_n,s_i)$ where $s_i: C_n\to C_{n+1}$ for $0\leq i \leq n$. We define a graded 
module of {\it degenerated submodules} $C_n^D$ as follows:
$$C_n^D= span\{s_0(C_{n-1}),...,s_{n-1}(C_{n-1}) \}.$$  
If $(C_n,d_i,s_i)$ is a presimplicial module with degeneracy maps, 
then  $C_n^D$ forms a subchain complex of $(C_n,\partial_n)$ with $\partial_n=\sum_{i=1}^n(-1)^id_i$ 
provided that conditions (3) and (4') of Definition \ref{Definition 3.3} hold (in particular, if 
$(C_n,d_i,s_i)$ is a weak simplicial module). We compute:
$$\partial_ns_p= (\sum_{i=0}^n(-1)^id_i)s_p= \sum_{i=0}^n(-1)^i(d_is_p)= $$
 $$ \sum_{i=0}^{p-1}(-1)^i(d_is_p) + (-1)^pd_ps_p + (-1)^{p+1}d_{p+1}s_p + \sum_{i=p+2}^{p-1}(-1)^i(d_is_p)=$$
$$\sum_{i=0}^{p-1}(-1)^i(s_{p-1}d_i) + \sum_{i=p+2}^{p-1}(-1)^i(s_pd_{i-1}) \in C^D_{n-1}.$$

It is a classical result that if $(C_n,d_i,s_i)$ is a simplicial module, then $C_n^D$ is an acyclic 
subchain complex. The result does not hold, however, for a weak simplicial module, and we 
can have nontrivial degenerate homology $H_n^D=H_n(C^D)$ and normalized homology $H^{Norm}_n=H_n(C/C^D)$ 
different from $H_n(C)$.
These play an important role in the theory of distributive homology.

\begin{remark}\label{Remark 3.4} Even if $(C_n,d_i,s_i)$ is only a very weak simplicial module, that is $d_is_i$ is 
not necessarily equal to $d_{i+1}s_i$, we can construct the analogue of a degenerate subcomplex. 
We define $t_i: C_n \to C_n$ by $t_i=d_is_i-d_{i+1}s_i$, and define subgroups $C_n^{(t)}\subset C_n$ as 
$span (t_0(C_n),...,t_{n-1}(C_n),t_n(C_n))$.  Then we define the subgroups $C_n^{(tD)}$ as
$span(C_n^{(t)},C_n^D)$. We check directly that $C_n^{(t)}$ and $C_n^{(tD)}$ 
are subchain complexes of $(C_n,\partial_n)$ and they play an important role in distributive homology.
In Theorem \ref{Theorem 6.6} we show how to use the triplet of chain complexes $C_n^{(t)}\subset C_n^{(tD)} \subset C_n$ 
to find the homology of a shelf $(X;*_g)$ with $a*_gb= g(b)$, $g:X \to X$, and $g^2=g$. The generalization 
of this is given in \cite{P-S}.
\end{remark}

\section{Homology for a simplicial complex}

The homology theories that we introduce are modelled on the classical homology of simplicial complexes.
We review this for completeness below.

Let $K=(X,S)$ be an abstract simplicial complex with vertices $X$ (which we order) and simplexes $S\subset 2^X$.
That is, we assume elements of $S$ are finite, include all one-element subsets\footnote{We find
it convenient to also allow an  empty simplex, say of dimension $-1$; it will lead to augmented
chain complexes.}, and that if $s'\subset s \in S$, then also $s'\in S$. 
The associated chain complex has a chain group $C_n$ that is a subgroup of 
$\Z X^{n+1}$ (i.e. a free abelian group with basis $X^{n+1}$) generated by
$n$-dimensional simplexes $(x_0,x_1,...,x_n)$: we assume that $x_0<x_1<...<x_n$ in our
ordering. The boundary operation is defined by:
$$\partial (x_0,x_1,...,x_n) = \sum_{i=0}^n (-1)^i (x_0,...,x_{i-1},x_{i+1},...,x_n).$$
Notice that we can put $d_i(x_0,x_1,...,x_n)= (x_0,...,x_{i-1},x_{i+1},...,x_n)$ 
with $\partial_n=\sum_{i=0}^n (-1)^id_i$,
and that $(C_n,d_i)$ is a simplicial module (i.e. $d_id_j= d_{j-1}d_i$ for $0\leq i <j \leq n$).
 
We do not require any structure on $X$, but as we will see later we can think of $X$ as
a (trivial) semigroup or a shelf, $(X,*_0)$, with \
 $a*_0b=a$ for any $a,b\in X$.

One proves classically that homology does not depend on the ordering of $X$.
Alternatively, one can consider a chain complex with bigger chain groups $\bar C_n\subset \Z X^{n+1}$ 
generated by sequences
$(x_0,x_1,...,x_n)$ such that the set $\{x_0,x_1,...,x_n\}$ is a simplex in $S$; 
as before we put $\partial (x_0,x_1,...,x_n) = \sum_{i=0}^n (-1)^i (x_0,...,x_{i-1},x_{i+1},...,x_n).$
In this approach, our definition is ordering independent and allows degenerated simplexes.
The homology is the same as we can consider the acyclic subcomplex of $\bar C_n$ generated
by degenerate elements $(x_0,x_1,...,x_n)$, that is, elements with $x_i=x_{i+1}$ for some $i$, and
``transposition" elements
$(x_0,...x_{i-1},x_i,x_{i+1},x_{i+2},...,x_n)+ (x_0,...x_{i-1},x_{i+1},x_i,x_{i+2},...,x_n)$. 

In this second approach we have a simplicial module $(C_n,d_i,s_i)$ with $s_i(x_0,...,x_n)=
(x_0,...,x_{i-1},x_i,x_i,x_{i+1},...,x_n)$.

{\bf The motivation for the boundary operation} comes from the geometrical realization of an abstract
simplicial complex as illustrated below:\

$$\partial(x_0,x_1,x_2)= \partial({\parbox{2.7cm}{\psfig{figure=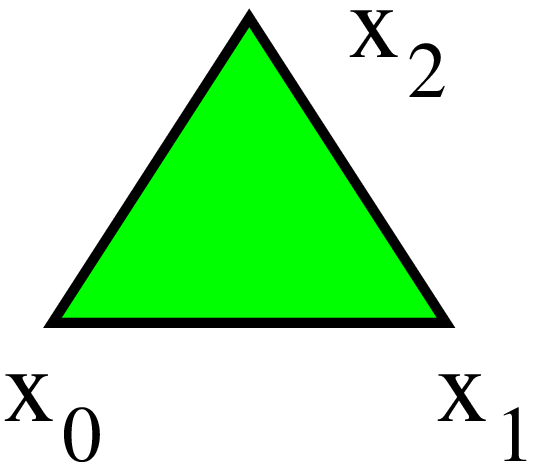,height=2.5cm}}}) =
{\parbox{2.7cm}{\psfig{figure=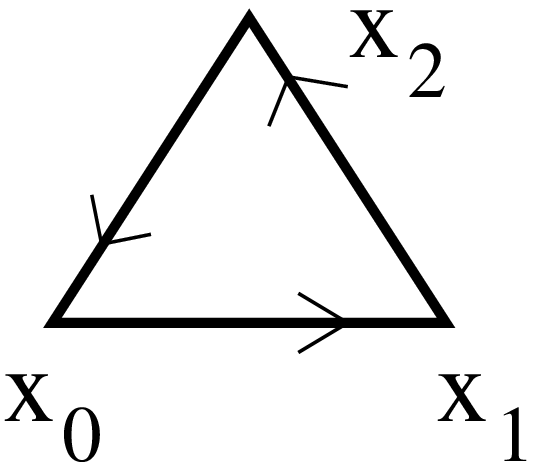,height=2.5cm}}} = $$
$$  (x_1,x_2)-(x_0,x_2)+(x_0,x_1).$$

\section{Homology of an associative structure: group homology and Hochschild homology}

We describe below two classical homology theories for semigroups. Our homology of distributive 
structures is related to these theories.

\subsection{Group homology of a semigroup}
Let $(X,*)$ be a semigroup. We define a chain complex $\{C_n,\partial_n\}$ as follows:
$C_n(X)= \Z X^n$ and $\partial_n : \Z X^n \to \Z X^{n-1}$ is defined by
$$ \partial(x_1,...,x_n)= (x_2,...,x_n) +$$
$$\sum_{i=1}^{n-1}(-1)^i(x_1,...,x_{i-1},x_i*x_{i+1},x_{i+2},...,x_n) +$$
$$(-1)^n(x_1,...x_{n-1})$$
We also assume that $H_0(X)=\Z$ and $\partial_1(x)=1$.
We can check that $ \partial^2=0$ if and only if $*$ is associative.
\begin{example}\label{Example 5.1}
Checking this is quite illuminative, so we perform it for $n=3$:
$$\partial_2(\partial_3(x_1,x_2,x_3))=\partial_2((x_1,x_2) -(x_0*x_1,x_2)+ (x_0,x_1*x_2)-(x_0,x_1))=$$
$$ x_2 - x_1*x_2 +x_1 $$
$$- x_2 + (x_0*x_1)*x_2 -x_0*x_1$$
$$+ x_1*x_2  - x_0*(x_1*x_2) + x_0$$
$$-x_1 + x_0*x_1 - x_0 =$$
$$ (x_0*x_1)*x_2 -x_0*(x_1*x_2),$$
which is 0 iff $*$ is associative.
\end{example}

Let $\partial^{(\ell)}$ be a boundary map obtained from the group homology boundary operation by dropping
the first term from the sum. Analogously, let $\partial^{(r)}$ be a boundary map obtained 
from the group homology boundary operation by dropping the last term from the sum. It is a classical 
observation that $(C_n,\partial^{(\ell})$ and $(C_n,\partial^{(r)})$ are acyclic for a group (or a monoid).
We show this below in a slightly more general context (used later in the distributive case).
\begin{example}\label{Example 5.2}\
\begin{enumerate}
\item[($\ell)$] Assume that a semigroup $(X,*)$ has a left identity $1_{\ell}$ (i.e. $1_{\ell}x=x$),
then the chain homotopy $$H_{\ell}(x_1,...,x_n)= (1_{\ell},x_1,...,x_n)$$
satisfies:
$$(\partial^{(\ell)} H_{\ell} + H_{\ell}\partial^{(\ell)})(x_1,...,x_n)= Id_X.$$
Thus the identity map is chain homotopic to the zero map, and the related homology groups are trivial.
\item[($r$)] Assume that a semigroup $(X,*)$ has a right identity $1_{r}$ (i.e. $x1_r=x$), then 
the chain homotopy 
$$H_{r}(x_1,...,x_n)= (-1)^{n+1}(x_1,...,x_n,1_r)$$
and we get:
$$(\partial^{(r)} H_{r} + H_{r}\partial^{(r)})(x_1,...,x_n)= Id_X.$$
Thus the identity map is chain homotopic to the zero map, and the related homology groups are trivial.
\end{enumerate}
\end{example}
One of the classical observations in group homology is that if $(X,*)$ is a finite group, then  
the cardinality of $X$, $|X|$, annihilates homology groups. We demonstrate this below in a slightly more general context; we 
use the observation later for distributive homology.
\begin{proposition} Assume that $(X,*)$ is a semigroup which contains a finite right orbit  $A$, 
that is, $A$ is a finite subset of $X$ such that for each $b\in X$, we have $*_b(A)=A$ 
(i.e. $*_b:A \to A$ is a bijection). Then $|A|$ annihilates $H_n(X)$. In particular, if 
$(X,*)$ is a finite group we can take $A=X$. If $(X,*)$ has a left zero\footnote{Sometimes called a left 
 projector.} $p_{\ell }$ (i.e. $p_{\ell}*x=p_{\ell}$), 
then we can take $A=\{p_{\ell}\}$ and the homology groups are trivial.
\end{proposition}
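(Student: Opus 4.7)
The plan is to mimic the classical group-homology trick and build an explicit chain homotopy $H : C_n \to C_{n+1}$ satisfying $\partial H + H\partial = |A|\cdot\mathrm{Id}$ in every positive degree. Once this is in place the conclusion is automatic: for any cycle $z \in Z_n$, $|A|\cdot z = \partial(H(z))$, so $|A|$ annihilates $H_n(X)$ for $n \geq 1$.

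The natural candidate is to prepend an element of $A$ and sum:
$$H(x_1,\ldots,x_n) \;=\; \sum_{a\in A}(a,x_1,\ldots,x_n),$$
which is well-defined because $A$ is finite. To verify the identity I would expand $\partial(a,x_1,\ldots,x_n)$ term by term and pair each summand against a corresponding summand of $H\partial(x_1,\ldots,x_n)$. The leading term of $\partial H$ contributes $\sum_{a\in A}(x_1,\ldots,x_n)=|A|(x_1,\ldots,x_n)$, which is precisely the quantity we want to isolate. The second term of $\partial H$ is $-\sum_{a\in A}(a*x_1,x_2,\ldots,x_n)$, and here the orbit hypothesis enters decisively: since $*_{x_1}$ permutes $A$ bijectively, the change of variable $a' = a*x_1$ rewrites this sum as $-\sum_{a\in A}(a,x_2,\ldots,x_n)$, exactly cancelling the first term of $H\partial$. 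All remaining interior terms of $\partial H$ and $H\partial$ are of the shape $\pm(a,x_1,\ldots,x_j*x_{j+1},\ldots,x_n)$ or $\pm(a,x_1,\ldots,x_{n-1})$; after reindexing, each pair carries opposite signs and cancels, leaving only $|A|\cdot(x_1,\ldots,x_n)$.

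With the chain homotopy established, the two special cases are immediate: if $X$ is a finite group then $*_b$ is a bijection of $X$ for every $b$, so $A=X$ works; and if $p_\ell$ is a left zero then $p_\ell*b=p_\ell$ says exactly that $*_b$ fixes $p_\ell$, so $A=\{p_\ell\}$ is a one-element right orbit, whence $|A|=1$ and the identity itself is null-homotopic, forcing $H_n(X)=0$ in positive degrees. The only real obstacle is bookkeeping: the middle-term signs in $\partial(a,x_1,\ldots,x_n)$ are shifted by one position relative to those in $\partial(x_1,\ldots,x_n)$, producing the $(-1)^i$ versus $(-1)^{i+1}$ discrepancy that drives the cancellation. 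Nothing deep happens, but the sign alignment is easy to bungle on a first pass, so I would carry out the computation explicitly for small $n$ (say $n=2$) as a sanity check before writing the general case.
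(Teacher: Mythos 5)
Your proof is correct and is essentially the paper's own argument: the paper defines $\Sigma=\sum_{a\in A}a$ and uses the chain homotopy $h(x_1,\ldots,x_n)=(\Sigma,x_1,\ldots,x_n)$, which is exactly your $H$ written in bilinear notation, with the same cancellation via the bijectivity of $*_{x_1}$ on $A$. The sign bookkeeping you flag does work out as you describe, so no changes are needed.
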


\begin{proof} Let $\Sigma=\sum_{a\in A}a$, in $\Z X$. We have $\Sigma *b=\Sigma$. We consider the chain homotopy
$h_n(x_1,...,x_n)=(\Sigma,x_1,...,x_n)$ (with the convention that $h_{-1}(1)=\Sigma$). 
This is a chain homotopy between $|A|Id$ and the zero map, i.e. we have 
$\partial_{n+1}h+ h\partial_{n} = |A|Id$.  Thus we
conclude that $|A|$ is an annihilator of homology ($|A|H_n(X)=0$). 
\end{proof}

\begin{remark}\label{Remark 5.4} 
\begin{enumerate} 
\item[(i)]
If we define $d_i: C_n\to C_{n-1}$ by:
$$d_0(x_1,...,x_n)= (x_2,...,x_n) $$
$$d_i(x_1,...,x_n)= (x_1,...,x_{i-1},x_i*x_{i+1},x_{i+2},...,x_n) \ \ for\ \ 0<i<n$$
$$and \ \ d_n(x_1,...,x_n)=(x_1,...x_{n-1})$$
then $(C_n,d_i)$ is a presimplicial module.
\item[(ii)] If $(X;*)$ is a monoid, we define degeneracy maps $s_0(x_1,...,x_n)= (1,x_1,...,x_n)$, and 
for $i>0$, $s_i(x_1,...,x_n)= (x_1,...,x_i,1,x_{i+1},...,x_n)$. Then $(C_n,d_i,s_i)$ is a simplicial module.
\end{enumerate}
\end{remark}

\subsection{Hochschild homology of a semigroup}
Let $(X;*)$ be a semigroup. We define a Hochschild chain complex $\{C_n,\partial_n\}$ as follows \cite{Hoch,Lod}:
$C_n(X)= \Z X^{n+1}$ and the Hochschild boundary \
$\partial_n : \Z X^n \to \Z X^{n-1}$ is defined by:
$$\partial(x_0,x_1,...x_n)= $$
$$\sum_{i=0}^{n-1}(-1)^i(x_0,...,x_{i-1},x_i*x_{i+1},x_{i+2},...,x_n) +$$
$$(-1)^n(x_n*x_{0},x_1,...x_{n-1})$$

The resulting homology is called the Hochschild homology of a semigroup $(X,*)$
and denoted by $H\!H_n(X)$ (introduced by Hochschild in 1945 \cite{Hoch}).
It is useful to define $C_{-1}=\Z$ and define $\partial_0(x)=1$ to obtain the augmented Hochschild 
chain complex and augmented Hochschild homology.

Again if $(X,*)$ is a monoid then dropping the last term gives an acyclic chain complex.

More generally (and similarly to group homology),
we check that if $(X,*)$ has a left unit $1_{\ell}$, then the chain homotopy 
$H_{\ell}(x_0,...,x_n)= (1_{\ell},x_0,...,x_n)$
satisfies $(\partial H_{\ell} + H_{\ell}\partial)(x_0,...,x_n)= (x_0,x_1,...,x_n)$, so the identity map
is chain homotopic to the zero map. 
For $(X,*)$ with a right unit $1_r$ we use the chain homotopy $H_r((x_0,...,x_n)= (-1)^{n+1}(x_0,...,x_n,1_r)$
to get a chain homotopy between the identity and the zero map.

Notice that dropping the last term in the definition of the boundary operation in 
Hochschild homology is like dropping the first and the last terms in
$\partial$ for the group homology of a semigroup (up to a grading shift).

\begin{remark} \begin{enumerate} 
\item[(i)]
If we define $d_i: C_n\to C_{n-1}$ by:
$$d_i(x_0,...,x_n)= (x_0,...,x_{i-1},x_i*x_{i+1},x_{i+2},...,x_n) \ \ for\ \ 0\leq i<n$$
$$and \ \ d_n(x_0,...,x_n)=(x_n*x_0,x_1,...x_{n-1}),$$
then $(C_n,d_i)$ is a presimplicial module.
\item[(ii)] If $(X,*)$ is a monoid, we define degeneracy maps 
for $0\leq i\leq n$ by the formula 
$s_i(x_0,...,x_n)= (x_0,...,x_i,1,x_{i+1},...,x_n)$. Then $(C_n,d_i,s_i)$ is a simplicial module.
\end{enumerate}
\end{remark}

\begin{remark}\label{Remark 5.6} 
To build a Hochschild chain complex we do not have to restrict ourselves to the case
of a semigroup $X$ or a semigroup ring $RX$. We can consider a general (associative) ring $A$
and our definitions still work due to the homogeneity of the boundary operation.
Thus we put $C_n(A)= A^{\otimes n+1}$, $d_i(a_0,...,a_n)= (a_0,..,a_i*a_{i+1},...a_n)$ for $0\leq i <n$, and
$d_n(a_0,...,a_n)= (a_n*a_0,a_1,...,a_{n-1})$. Notice that $d_id_{i+1}=d_id_i$ iff
$a_i*(a_{i+1}*a_{i+2})= (a_i*a_{i+1})*a_{i+2}$, that is, iff $*$ is associative.
\end{remark}

\section{Homology of distributive structures}\label{Section 6}

Recall that a shelf $(X,*)$ is a set $X$ with  a right self-distributive  binary operation
$*:X \times X \to X$ (i.e. $(a*b)*c= (a*c)*(b*c)$). 
\begin{definition}\label{Definition 6.1}
We define a (one-term) distributive chain complex ${\mathcal C}^{(*)}$ as 
follows:  $C_n=\Z X^{n+1}$ and the boundary operation $\partial^{(*)}_n: C_n \to C_{n-1}$ is given by:
$$\partial^{(*)}_n(x_0,...,x_n)= (x_1,...,x_n) +$$
 $$ \sum_{i=1}^{n}(-1)^i(x_0*x_i,...,x_{i-1}*x_i,x_{i+1},...,x_n).$$
The homology of this chain complex is called
a one-term distributive homology of $(X,*)$ (denoted by $H_n^{(*)}(X)$).
\end{definition}
We directly check that $\partial^{(*)}\partial^{(*)}=0$ (see Example 6.3 and Proposition \ref{Proposition 6.4}).

We can put $C_{-1}=\Z$ and $\partial_0(x)=1$. We have $\partial_0\partial_1^{(*)}=0$, so we 
obtain an augmented distributive chain complex and an 
augmented (one-term) distributive homology, $\tilde H^{(*)}_n$. As in the classical case we get:
\begin{proposition}\label{Proposition 6.4}
$
 H_n^{(*)}(X)=
 \begin{cases}
 \Z \oplus \tilde H^{(*)}_n(X) & n = 0 \\
 \tilde H^{(*)}_n(X) & \text{otherwise}
 \end{cases}
$
\end{proposition}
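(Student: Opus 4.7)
The plan is to exploit the standard relationship between unreduced and augmented homology: the two chain complexes agree in all positive degrees, and differ only by one extra term at the bottom. The proof splits into two easy cases.

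First I would treat $n \geq 1$. For these degrees the chain groups and boundary maps of the augmented and unaugmented complexes coincide, and the augmentation $\partial_0$ plays no role in computing $\ker(\partial_n^{(*)})/\mathrm{Im}(\partial_{n+1}^{(*)})$. Hence $H_n^{(*)}(X) = \tilde H_n^{(*)}(X)$ trivially for all $n \geq 1$.

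The only real content is at $n=0$. Here $H_0^{(*)}(X) = \Z X / \mathrm{Im}(\partial_1^{(*)})$ while $\tilde H_0^{(*)}(X) = \ker(\partial_0)/\mathrm{Im}(\partial_1^{(*)})$. The augmentation map $\partial_0 : \Z X \to \Z$, $x \mapsto 1$, is surjective (assuming $X \neq \emptyset$), so one has a short exact sequence
\[
0 \to \ker(\partial_0) \to \Z X \xrightarrow{\partial_0} \Z \to 0,
\]
which splits because $\Z$ is free, giving $\Z X \cong \ker(\partial_0) \oplus \Z$ as abelian groups. A direct calculation shows $\partial_0\partial_1^{(*)}(x_0,x_1)=\partial_0((x_1)-(x_0*x_1))=1-1=0$, so $\mathrm{Im}(\partial_1^{(*)})\subset \ker(\partial_0)$. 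Quotienting the direct sum decomposition by $\mathrm{Im}(\partial_1^{(*)})$ (which sits inside the first summand) yields
\[
H_0^{(*)}(X) = \Z X / \mathrm{Im}(\partial_1^{(*)}) \cong \ker(\partial_0)/\mathrm{Im}(\partial_1^{(*)}) \oplus \Z = \tilde H_0^{(*)}(X) \oplus \Z,
\]
which is precisely the claimed formula.

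There is no real obstacle; the only point worth being careful about is the splitting in degree zero, where one uses that $\Z$ is a free (hence projective) $\Z$-module. If one works over a more general ring $R$ one should instead note that $\partial_0$ admits a set-theoretic section sending $1 \in R$ to some fixed $x_0 \in X$, which provides the splitting in the module category. An edge case to mention briefly is $X = \emptyset$, in which the statement is vacuous.
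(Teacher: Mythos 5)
Your proof is correct and is precisely the standard classical argument the paper invokes (it offers no proof beyond ``As in the classical case we get''): the complexes agree in positive degrees, and in degree zero the split surjection $\partial_0:\Z X\to\Z$ together with $\mathrm{Im}(\partial_1^{(*)})\subset\ker(\partial_0)$ gives $H_0^{(*)}(X)\cong\Z\oplus\tilde H_0^{(*)}(X)$. Your verification that $\partial_0\partial_1^{(*)}=0$ and your remark on the nonemptiness of $X$ are both accurate.
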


\begin{example}\label{Example 6.3} 
We check here that $\partial^{(*)}_1(\partial^{(*)}_2(x_0,x_1.x_2))=0$ is equivalent to 
$*$ being right self-distributive:
$$\partial^{(*)}_1(\partial^{(*)}_2(x_0,x_1,x_2))= \partial^{(*)}_1((x_1,x_2) - (x_0*x_1,x_2) + (x_0*x_2,x_1*x_2))=$$
 $$ x_2 - x_1*x_2 +$$
$$-x_2 + (x_0*x_1)*x_2 +$$
$$ x_1*x_2 - (x_0*x_2)*(x_1*x_2)=$$
$$(x_0*x_1)*x_2 - (x_0*x_2)*(x_1*x_2)\stackrel{distrib}{=}0$$
\end{example}

\begin{proposition}\label{Proposition 6.4}\ 
\begin{enumerate}
\item[(i)] Let $d_0(x_0,...x_n)= (x_1,...,x_n)$ and 
$d_i(x_0,...x_n)= (x_0*x_i,...,x_{i-1}*x_i,x_{i+1},...,x_n)$, for $0<i\leq n$. Then $(C_n,d_i)$ is a 
presimplicial module. In fact, $d_id_{i+1}=d_id_i$ for $i>0$ is equivalent to right self-distributivity.
\item[(ii)] Let $s_i(x_0,...x_n)= (x_0,...,x_{i-1},x_i,x_i,x_{i+1},...,x_n)$, then $(C_n,d_i,s_i)$ is a 
very weak simplicial module.
\item[(iii)] If $(X,*)$ is  a spindle, then $(C_n,d_i,s_i)$ is a weak simplicial module.
\end{enumerate}
\end{proposition}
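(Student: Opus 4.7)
My plan is to verify all three parts by direct entrywise bookkeeping on basis elements $(x_0,\dots,x_n)$, isolating the one step where an algebraic axiom is genuinely used in each case.

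For part (i), I fix $0 \le i < j \le n$ and compute $d_id_j$ and $d_{j-1}d_i$ coordinate by coordinate. When $i = 0$ the computation involves only deletions plus multiplication by $x_j$ on the leading block, and the two sides match purely by index shifting. When $0 < i < j$, applying $d_j$ first turns positions $0,\dots,j-1$ into $x_k*x_j$ and then $d_i$ multiplies positions $0,\dots,i-1$ by $y_i = x_i*x_j$, producing entries $(x_k*x_j)*(x_i*x_j)$; applying $d_i$ first then $d_{j-1}$ produces $(x_k*x_i)*x_j$ at the same positions. These agree precisely by right self-distributivity, and the remaining coordinates match by straightforward reindexing because the two face maps act on disjoint coordinate ranges there. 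For the converse, specialising to $n=2$, $i=1$, $j=2$ reduces the identity $d_1d_2 = d_1d_1$ to $(x_0*x_1)*x_2 = (x_0*x_2)*(x_1*x_2)$, exactly self-distributivity; this is essentially the content already visible in Example \ref{Example 6.3}.

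For part (ii), it remains to check conditions (2) and (3) of Definition \ref{Definition 3.3}, both purely combinatorial. Condition (2), $s_is_j = s_{j+1}s_i$ for $i \le j$, says that duplicating the entries at positions $i$ and $j$ produces the same tuple regardless of order, with the shift $j \mapsto j+1$ compensating for the earlier insertion at position $i$. For condition (3), when $i < j$ the degeneracy $s_j$ acts only in the tail while $d_i$ acts in the head, so they commute up to the expected index shift; the case $i > j+1$ is symmetric. No shelf axiom is used, which is exactly why the structure is only very weak simplicial in general.

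For part (iii), I isolate condition (4'), $d_is_i = d_{i+1}s_i$. Writing $s_i(x_0,\dots,x_n) = (x_0,\dots,x_{i-1},x_i,x_i,x_{i+1},\dots,x_n)$, applying $d_i$ multiplies the first $i$ coordinates by $x_i$ and deletes the first duplicate, yielding $(x_0*x_i,\dots,x_{i-1}*x_i,x_i,x_{i+1},\dots,x_n)$, while $d_{i+1}$ multiplies positions $0,\dots,i$ by $x_i$ and deletes the second duplicate, yielding $(x_0*x_i,\dots,x_{i-1}*x_i,x_i*x_i,x_{i+1},\dots,x_n)$. These outputs differ only at position $i$, where one has $x_i$ and the other $x_i*x_i$, so (4') is equivalent to $a*a = a$ for all $a\in X$, i.e., to $(X;*)$ being a spindle. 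The main obstacle throughout is careful coordinate tracking; the genuine algebraic content is concentrated in the two places where a potential mismatch forces either self-distributivity (in (i)) or idempotency (in (iii)).
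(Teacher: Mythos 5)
Your proof is correct and follows essentially the same route as the paper: a direct coordinate-by-coordinate computation showing that the presimplicial identity forces right self-distributivity exactly on the head positions $k<i$, that conditions (2) and (3) are purely combinatorial, and that (4') reduces to $x_i$ versus $x_i*x_i$ in a single slot, i.e.\ to idempotency. If anything your case analysis in (i) is slightly more complete, since you verify that distributivity is what makes \emph{every} case $0<i<j$ work, whereas the paper only displays the critical case $j=i+1$.
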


\begin{proof} (i) This is a direct calculation and in the cases of $0=i\leq j$ and $i\leq j-1$ the equality 
$d_id_j=d_{j-1}d_i$ holds without any assumption on  $*$. The equality $d_id_{i+1}-d_id_i=0$ for 
$0<i=j-1$ is equivalent to right self-distributivity. We have:
$$(d_id_{i+1}-d_id_i)(x_0,...,x_n)= $$ 
$$ d_i((x_0*x_{i+1},...,x_{i-1}*x_{i+1},x_{i}*x_{i+1},x_{i+2},...,x_n) - 
(x_0*x_i,...,x_{i-1}*x_i,x_{i+1},x_{i+1},x_{i+2},...,x_n))= $$
$$((x_0*x_{i+1})*(x_{i}*x_{i+1}),...,(x_{i-1}*x_{i+1})*(x_{i}*x_{i+1}), x_{i+2},...,x_n) - $$
$$((x_0*x_i)*x_{i+1},...,(x_{i-1}*x_i)*x_{i+1},x_{i+2},...,x_n) \stackrel{distr}{=} 0.$$
(ii) A short calculation shows that conditions (2) and (3) of a very weak simplicial module hold 
without any assumption on $*$.\\
(iii) We check Condition (4') of Definition \ref{Definition 3.3}: $(d_is_i - d_{i+1}s_i)(x_0,...,x_n)=$
$$(d_i-d_{i+1})((x_0,...,x_{i-1},x_i,x_i,x_{i+1},...,x_n))= $$ 
$$ (x_0*x_i,...,x_{i-1}*x_i,{\bf x_i-x_i*x_i},x_{i+1},...,x_n) \stackrel{idemp}{=}0.$$
We notice that distributivity was not needed here, only the idempotency property of $*$.
\end{proof}
Proposition \ref{Proposition 6.4} is generalized in Lemma \ref{Lemma 7.1}.

\subsection{Computation of one-term distributive homology}\label{Subsection 6.1}
If $(X;*)$ is a rack, then the one-term (augmented) distributive chain complex is acyclic.
This may be the reason that this homology was not studied before. The first systematic calculations  
are given in \cite{P-S}. We observe there, in particular, that if there is given $b\in X$ in a shelf $(X;*)$ 
such that $*_b$ is invertible, then $\tilde H_n^{(*)}(X)=0$. To this effect, consider a chain homotopy 
$(-1)^{n+1}h_b$, where $h_b(x_0,...,x_n)=(x_0,...,x_n,b)$ to get 
$(\partial_{n+1}^{(*)}(-1)^{n+1}h_b + (-1)^{n}h_b\partial_{n}^{(*)})(x_0,...,x_n)= (x_0,...,x_n)*b$.
Thus the map $(x_0,...,x_n) \to (x_0,...,x_n)*b$ is chain homotopic to zero and if $*_b$ is invertible,
$\tilde H_n^{(*)}(X)=0$; compare Proposition \ref{Proposition 8.5}(v).

Below we show another result in this direction, motivated by an analogous observation from group homology 
(Proposition 5.3).
\begin{proposition}\label{Proposition 6.5}
 Assume that $(X;*)$ is a shelf which contains a finite right orbit  $A$,
that is, $A$ is a finite subset of $X$ such that for each $b\in X$, we have $*_b(A)= A*b=A$
(i.e. $*_b:A \to A$ is a bijection). Then $|A|$ annihilates $H_n(X)$. 
In particular, if $(X;*)$ has a left zero $p_{\ell }$ (i.e. $p_{\ell}*x=p_{\ell}$ for any $x\in X$),
then we can take $A=\{p_{\ell}\}$ and the (augmented) homology groups are trivial.

\end{proposition}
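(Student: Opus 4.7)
The plan is to mimic the proof of Proposition 5.3 by building an explicit chain homotopy between $|A|\cdot\mathrm{Id}$ and the zero map, now using the distributive boundary $\partial^{(*)}$ instead of the group-homology boundary. Set $\Sigma=\sum_{a\in A}a\in\Z X$. The crucial property of the orbit is that $*_b$ acts as a permutation of $A$ for every $b\in X$; consequently $\Sigma*b=\sum_{a\in A}(a*b)=\sum_{a'\in A}a'=\Sigma$. This is the substitute for the identity $\Sigma*b=\Sigma$ that was used in the associative setting, and it will be applied in every term of the boundary formula.

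Define $h_n: C_n\to C_{n+1}$ by $h_n(x_0,\ldots,x_n)=(\Sigma,x_0,\ldots,x_n)=\sum_{a\in A}(a,x_0,\ldots,x_n)$. I would next compute $\partial_{n+1}^{(*)}h_n(x_0,\ldots,x_n)$ directly from Definition \ref{Definition 6.1}. Writing $y_0=\Sigma,y_1=x_0,\ldots,y_{n+1}=x_n$, the leading term $(y_1,\ldots,y_{n+1})$ produced by $d_0$ contributes $|A|\cdot(x_0,\ldots,x_n)$, since the front slot $\Sigma$ is deleted linearly and absorbs the sum over $A$. For each $i\geq 1$ the face $d_i$ acts by applying $*_{y_i}=*_{x_{i-1}}$ to every earlier entry, and in particular replaces the initial $\Sigma$ by $\Sigma*x_{i-1}=\Sigma$; reading off what is left, one sees
\[
 d_i h_n(x_0,\ldots,x_n) \;=\; h_{n-1}d_{i-1}(x_0,\ldots,x_n) \qquad (i\ge 1).
\]
Summing the alternating signs gives
\[
 \partial_{n+1}^{(*)}h_n(x_0,\ldots,x_n) \;=\; |A|\,(x_0,\ldots,x_n)\;-\;h_{n-1}\partial_n^{(*)}(x_0,\ldots,x_n),
\]
i.e.\ $\partial_{n+1}^{(*)}h_n+h_{n-1}\partial_n^{(*)}=|A|\cdot\mathrm{Id}$. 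Hence $|A|\cdot\mathrm{Id}$ is chain-homotopic to the zero map, and Theorem 3.2 yields $|A|\,H_n^{(*)}(X)=0$.

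For the augmented statement I would extend by setting $h_{-1}(1)=\Sigma$; a one-line verification shows $\partial_1^{(*)}h_0(x_0)+h_{-1}\partial_0(x_0)=|A|x_0$, so the same conclusion applies to $\tilde H_n^{(*)}(X)$. Finally, a left zero $p_\ell$ satisfies $p_\ell*x=p_\ell$ for every $x$, so $A=\{p_\ell\}$ is a one-element right orbit; since $|A|=1$ annihilates all homology, the augmented distributive homology vanishes.

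The only real obstacle is the bookkeeping in the middle step, specifically verifying the commutation $d_i h_n = h_{n-1}d_{i-1}$ for $i\ge 1$. This hinges on the two features already isolated: the invariance $\Sigma*x=\Sigma$ of the orbit sum, and the fact that the distributive face $d_i$ only ever touches entries at positions $<i$, so moving the block $(\Sigma,\cdot)$ through $d_i$ lands it in front of $d_{i-1}$ applied to the original tuple. Once that identity is established, the rest is formal telescoping.
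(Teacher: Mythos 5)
Your proposal is correct and is essentially the paper's own proof: the same chain homotopy $h_n(x_0,\ldots,x_n)=(\Sigma,x_0,\ldots,x_n)$ with $\Sigma=\sum_{a\in A}a$, the same invariance $\Sigma*b=\Sigma$, and the same identity $\partial_{n+1}h_n+h_{n-1}\partial_n=|A|\cdot\mathrm{Id}$. Your verification of $d_ih_n=h_{n-1}d_{i-1}$ for $i\ge 1$ spells out a step the paper leaves implicit, and your handling of the augmented complex and the left-zero case matches the paper's intent.
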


\begin{proof} The element $\sum_{a\in A}a \in \Z X$ is invariant under the right action, that is,
 $(\sum_{a\in A}a) *b=\sum_{a\in A}a$. We consider the chain homotopy
$$h(x_1,...,x_n)=((\sum_{a\in A}a),x_1,...,x_n) \text{ with the convention that } h(1)=\sum_{a\in A}a.$$
This is a chain homotopy between $|A|Id$ and the zero map, i.e. we have
$\partial_{n+1}h+ h\partial_{n} = |A|Id$.  Thus we
conclude that $|A|$ is an annihilator of homology ($|A|H_n(X)=0$).
\end{proof}
In Section 7, we introduce a multi-term distributive homology and Proposition \ref{Proposition 6.5} 
 can also be generalized to this,  
case, that is, for $\partial^{(a_1,...,a_k)}= \sum_{i=1}^k a_i\partial^{(*_i)}$ with $\sum_{i=1}^ka_i\neq 0$
and $A$ right invariant for any operation $*_i$.

In general, we conjecture in \cite{P-S} that one-term distributive homology is always torsion free. 
Thus in the case of 
Proposition \ref{Proposition 6.5} homology groups are conjectured to be trivial. In the special case of 
invertible $*$ (so $A=X$), we proved already at the beginning of this Subsection that the (augmented) 
homology groups are trivial (see also  \cite{P-S} and Corollary \ref{Corollary 8.2}(ii)).

\subsection{Computation for a shelf with $a*_gb=g(b)$}\

In \cite{P-S} we compute  the one-term distributive homology for a family of shelves  with a premiere example of 
a left trivial shelf  $(X;*_g)$, where $a*_gb=g(b)$ with $g^2=g$.
\begin{theorem}\cite{P-S}\label{Theorem 6.6}
$$\tilde H^{(*_g)}_n(X) \simeq \Z((g(X)-\{x_0\})\times X^n)$$ where $x_0$ is any fixed element of $g(X)$.
In other words, $\tilde H^{(*_g)}_n(X)$ is isomorphic to a free abelian group with basis $(g(X)-\{x_0\})\times X^n$.\\
For a finite $X$, we can write it as $\tilde H^{(*_g)}_n(X)= \Z^{(|g(X)|-1)|X|^n}$.

\end{theorem}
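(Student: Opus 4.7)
The plan is to exploit the fact that $\partial_n^{(*_g)}[x_0,\ldots,x_n]$ is independent of $x_0$, which factors the boundary through an idempotent linear map on $\Z X^n$. Writing $C_n=\Z X\otimes \Z X^n$, Definition \ref{Definition 6.1} shows that $\partial_n[x_0,x_1,\ldots,x_n]$ depends only on $(x_1,\ldots,x_n)$, so we can factor $\partial_n=\tilde\partial_n\circ p_n$, where $p_n=\epsilon\otimes\mathrm{Id}$ is the projection forgetting the first entry and $\tilde\partial_n\colon\Z X^n\to\Z X^n$ is the endomorphism $\tilde\partial_n=\sum_{i=0}^n(-1)^i\tau_i$ with $\tau_0=\mathrm{Id}$ and $\tau_i(y_1,\ldots,y_n)=(g(y_i)^i,y_{i+1},\ldots,y_n)$ for $i\geq 1$. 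Using $g^2=g$, a two-case check on $i\leq j$ versus $i>j$ gives $\tau_i\tau_j=\tau_{\max(i,j)}$; summing signs over pairs $(i,j)$ with $\max(i,j)=k$ yields $(-1)^k$, so $\tilde\partial_n^2=\sum_k(-1)^k\tau_k=\tilde\partial_n$, i.e.\ $\tilde\partial_n$ is idempotent.

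With $\tilde\partial_{n+1}$ an idempotent endomorphism of $C_n$, we have the splitting $C_n=\mathrm{im}\,\tilde\partial_{n+1}\oplus\ker\tilde\partial_{n+1}$. Since $\mathrm{im}\,\partial_{n+1}=\mathrm{im}\,\tilde\partial_{n+1}$ sits inside $\ker\partial_n$, intersecting with $\ker\partial_n$ yields $\ker\partial_n=\mathrm{im}\,\partial_{n+1}\oplus(\ker\partial_n\cap\ker\tilde\partial_{n+1})$, so $\tilde H_n^{(*_g)}(X)\cong\ker\partial_n\cap\ker\tilde\partial_{n+1}$. A direct computation gives the companion identity $p_n\tilde\partial_{n+1}=(\mathrm{Id}-\tilde\partial_n)p_n$: applied to $c$ in the intersection it forces $p_n(c)\in\mathrm{im}\,\tilde\partial_n$, while $c\in\ker\partial_n$ forces $p_n(c)\in\ker\tilde\partial_n$; by idempotency $\mathrm{im}\,\tilde\partial_n\cap\ker\tilde\partial_n=0$, so the intersection collapses to $\ker p_n\cap\ker\tilde\partial_{n+1}$.

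It remains to identify this intersection. In $\tilde\partial_{n+1}[x_0,x_1,\ldots,x_n]$ only the $i=0,1$ summands involve $x_0$, namely $[x_0,x_1,\ldots,x_n]-[g(x_0),x_1,\ldots,x_n]$, while the terms with $i\geq 2$ depend only on $(x_1,\ldots,x_n)$ and cancel on a difference $[x_0,x_1,\ldots,x_n]-[x_0',x_1,\ldots,x_n]$. Setting $\widetilde{\Z X}=\ker\epsilon$ and identifying $\ker p_n\cong\widetilde{\Z X}\otimes\Z X^n$, this gives
\[
\tilde\partial_{n+1}|_{\ker p_n}=(\mathrm{Id}-g)|_{\widetilde{\Z X}}\otimes\mathrm{Id}_{\Z X^n}.
\]
By flatness the kernel is $\ker\bigl((\mathrm{Id}-g)|_{\widetilde{\Z X}}\bigr)\otimes\Z X^n$, and $(\mathrm{Id}-g)(v)=0$ iff $v\in\Z g(X)$, so the kernel equals $\widetilde{\Z g(X)}\otimes\Z X^n$; for any basepoint $x_0\in g(X)$ this is free abelian with basis $(g(X)-\{x_0\})\times X^n$, giving the stated isomorphism.

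The main obstacle is the bookkeeping in the two identities $\tau_i\tau_j=\tau_{\max(i,j)}$ and $p_n\tilde\partial_{n+1}=(\mathrm{Id}-\tilde\partial_n)p_n$, both of which follow from $g^2=g$ via careful index chases. Once they are established the remainder is essentially formal, and freeness of $\tilde H_n^{(*_g)}(X)$ is automatic from its realisation as a subgroup of the free abelian group $C_n$.
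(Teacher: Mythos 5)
Your proof is correct, and it takes a genuinely different route from the paper's. The paper filters $C_n$ by $F_0^{(t)}\subset F_0^{(tD)}\subset C_n$ (the image of $t_0=d_0s_0-d_1s_0$ and the early degenerate part), shows the sub- and top quotient complexes have trivial boundary while the middle quotient is acyclic, and then extracts the answer from the long exact sequence of the pair, including an argument that the connecting homomorphism is an epimorphism. You instead exploit the special feature that $\partial_n^{(*_g)}$ forgets the first coordinate, so $\partial_n=\tilde\partial_n\circ p_n$ with $\tilde\partial_{n+1}$ an idempotent endomorphism of $C_n$; your two key identities $\tau_i\tau_j=\tau_{\max(i,j)}$ (which needs $g^2=g$ only in the case $i\le j$) and $p_n\tilde\partial_{n+1}=(\mathrm{Id}-\tilde\partial_n)p_n$ both check out, and the latter even reproves $\partial^2=0$ for free, since $\tilde\partial_n p_n\tilde\partial_{n+1}p_{n+1}=(\tilde\partial_n-\tilde\partial_n^2)p_np_{n+1}=0$. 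Idempotency then yields the splitting $\ker\partial_n=\mathrm{im}\,\partial_{n+1}\oplus\bigl(\ker\partial_n\cap\ker\tilde\partial_{n+1}\bigr)$, avoiding long exact sequences altogether and realising $\tilde H_n^{(*_g)}(X)$ as an explicit direct summand of the cycle group, namely $\widetilde{\Z g(X)}\otimes\Z X^n$ with explicit representatives $\bigl(y-x_0,x_1,\dots,x_n\bigr)$ for $y\in g(X)\setminus\{x_0\}$; freeness is then automatic rather than something to be tracked through an extension. What the paper's approach buys in exchange is robustness: the triple $F^{(t)}\subset F^{(tD)}\subset C$ is designed to work for shelves far more general than $*_g$ (and for the multi-term theory in the cited follow-ups), whereas your factorisation through $p_n$ is special to operations whose boundary is independent of $x_0$. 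Two small points worth making explicit in a write-up: the identification $\mathrm{im}\,\partial_{n+1}=\mathrm{im}\,\tilde\partial_{n+1}$ uses surjectivity of $p_{n+1}$, and the degenerate case $n=0$ goes through with $p_0=\epsilon=\partial_0$.
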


\begin{proof}
 We give a relatively short ``ideological" computation of $H_n^{(*_g)}(X)$ based on 
the short exact sequence of chain complexes introduced in Section 3 (compare
Remark \ref{Remark 3.4}). More precisely, let $F_0^{(t)}=F_0^{(t)}(C_n)= t_0(C_n)$, and 
$F_0^{(tD)}=F_0^{(tD)}(C_n)= span(t_0(C_n),s_0(C_{n-1}))$. We consider three nested 
chain complexes $F_0^{(t)} \subset F_0^{(tD)} \subset C_n$. The idea of our proof is to observe that 
$F_0^{(t)}$ has trivial boundary operations, $F_0^{(tD)}/F_0^{(t)}$ is acyclic, and $C_n/F_0^{(tD)}$ has 
trivial boundary operations. Finally, we have to study carefully the long exact sequence corresponding to 
the short exact sequence of chain complexes $0\to F_0^{(tD)} \to C_n \to C_n/F_0^{(tD)} \to 0$ to get the 
conclusion of the theorem.
In more detail, we are mostly interested in the case of $*_g$ from the theorem, 
but much of what follows applies in  more general setting.\
We have
$t_0(x_0,x_1,...,x_n)=(x_0,x_1,...,x_n) - (x_0*x_0,x_1,...,x_n)=(x_0-x_0*x_0,x_1,...,x_n)$; we 
use a ``bilinear notation".
We have $\partial t_0= 0$ as long as the equality $x*a = (x*x)*a$ holds\footnote{It holds if
a subshelf $A=*(X\times X)=\{z\in X\ | \ z=x*y \text{ for some  $x,y\in X$} \}$ is a spindle. 
For example, if there is a retraction $p: X \to A$ of $X$ to a spindle $A$ with $x_1*x_2=p(x_1)*p(x_2)$.
Two basic examples are: $a*_gb=g(b)$, $g^2=g=p$ and  $a*_fb=f(a)$, $f^2=f=p$; this idea is considered in \cite{P-S}.
} in $(X,*)$.
Thus we have:\\
(I) $H_n(F_0^t)= F_0^t=t_0C_n= \Z^{(|X|-|X/\sim|)|X|^{n}}$, 
 where $\sim$ is an equivalence relation on $X$ generated by $x \sim x*x$. For $*=*_g$, we can take as 
a basis of $H_n(F_0^t)= F_0^t$ elements $(x_0-g(x_0),x_1,...,x_n)$, and for a finite $X$, 
$H_n(F_0^t)=\Z^{(|X|-|g(X)|)|X|^{n}}$.\\
(II) For any shelf, $F_0^{(tD)}/F_0^{(t)}$ is acyclic. Namely, $s_0$ is a chain homotopy between
the identity and the zero map on $F_0^{(tD)}/F_0^{t}$. We have:
$$ \partial s_0 + s_0\partial = t_0 + s_0d_0 \equiv s_0d_0 \equiv Id\ on \ F_0^{(tD)}/F_0^{t}$$
(here we use the fact that in a shelf $d_0s_0=Id$, so $s_0d_0s_0=s_0$, thus $s_0d_0$ is the identity on
$F_0^{(tD)}/F_0^{t}$).\\
As a corollary, we have that the embedding $F^t_0 \to F_0^{(tD)}$ induces an isomorphism on homology.\\
(III) Consider now the chain complex $C_n/F_0^{(tD)}$. Here, for $a*_gb=g(b)$, $g^2=g$,
 the boundary operation is trivial as
$$\partial (x_0,x_1,...,x_n)= t_0(x_1,...,x_n)+\sum_{i=2}^n(-1)^i(g(x_i),...,g(x_i),x_{i+1},...,x_n)\in F_0^{(tD)}.$$
From this we conclude that $H_n(C_n/F_0^{(tD)})=C_n/F_0^{(tD)}$. As a basis of the group we can take 
elements $(x_0,x_1,...,x_n)$ with $x_0=g(x_0)$ and $x_1\neq x_0$. Thus the group is isomorphic to 
$\Z(g(X)\times (X-\{x_0\}|)\times X^{n-1})$ and for a finite $X$, the group is isomorphic to
 $\Z^{|g(X)|(|X|-1)|X|^{n-1}}$.\\
(IV) We consider the long exact sequence of homology corresponding to\\
 $0\to F_0^{(tD)}\to C_n \to C_n/F^{(tD)}\to 0$:
$$ ...\stackrel{b_*}{\rightarrow} H_n(F_0^{(tD)}) \to H_n(C) \to H_n(C/F^{(tD)}) \stackrel{b_*}{\rightarrow}
 H_{n-1}(F_0^{(tD)})\to... $$
We now show that the connecting homomorphism $b_*:H_n(C/F_0^{(tD)}) \to H_{n-1}(F_0^{(tD)})$ is an epimorphism. 
In fact, the element $(x_0,x_1-x_1*x_1,x_2,...,x_n)$ is a chain in $C_n$ but it is a cycle in $C_n/F_0^{(tD)}$. 
Thus its boundary $\partial(x_0,x_1-x_1*x_1,x_2,...,x_n)= (x_1-x_1*x_1,x_2,...,x_n)\in F_0^{(tD)}$, and 
this yields  
our connecting homomorphism $b: C_n/F_0^{(tD)} \to F_0^{(tD)}$, defined on the level of chains, with the 
image equal to $C_n^{(t)}$. 
However, because of (II), $b$ yields an epimorphism $b_*:H_n(C/F_0^{(tD)}) \to H_{n-1}(F_0^{(tD)})$.
Thus the long exact sequence of homology gives the short exact sequence:
$$0 \to H_n(C) \to H_n(C/F^{(tD)}) \stackrel{b_*}{\rightarrow} H_{n-1}(F_0^{(tD)})\to 0.$$
We now compute $H_n(C)$ as the kernel of $b_*$ to get the free abelian group with a basis 
obtained from the basis of $ H_n(C/F^{(tD)})$ by deleting elements of the form 
$(x_0,x_1-x_1*x_1,x_2,...,x_n)$ for fixed $x_0$. Thus, $H_n(X)$ is isomorphic to $ \Z((g(X))-\{x_0\})\times X^n)$
for $n>0$ and  $H_0(X)= \Z(g(X))$. If $X$ is finite we get $rank\ \tilde H_n = |g(X)|(|X|-1)|X|^{n-1}-(|X|-|g(X)|)|X|^{n-1}=
(|g(X)|-1)|X|^{n}$.
\end{proof}

We can make a small but useful generalization of Theorem \ref{Theorem 6.6} by considering
a new chain complex $C^{(d)}_n(X)$ obtained from $C_n^{(*_g)}(X)$ by taking, for any number $d$, the
boundary operation $\partial^{(d)} = d\partial^{(*_g)}$. See Theorem \ref{Theorem 9.1} and \cite{Pr-Pu} 
for further generalizations of the case $g=Id$.
\begin{corollary}\label{Corollary 6.7} Assume that $X$ is a finite set.
\begin{enumerate}
\item[(i)]If $d\neq 0$ then  $$\tilde H_n^{(d)}(X)= \Z^{(|g(X)|-1)|X|^n} \oplus \Z_d^{|X|^{n+1}-|g(X)|u_n},$$
where $u_n=u_n(|X|)=|X|^n - |X|^{n-1} +...+ (-1)^{n}|X|+(-1)^{n+1}$. In particular, if $g=Id$ we get 
$$\tilde H_n^{(d)}(X)= \Z^{(|X|-1)|X|^n} \oplus \Z_d^{u_n-(-1)^n}.$$
\item[(ii)] If $d=0$, then $H^{(d)}_n(X)= C^{(d)}_n(X)= \Z^{|X|^{n+1}}$.
\end{enumerate}
\end{corollary}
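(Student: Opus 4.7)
The plan is straightforward. Part (ii) is immediate: when $d=0$ the boundary $\partial^{(d)}=d\partial^{(*_g)}$ vanishes identically, so every chain is a cycle and no nonzero chain is a boundary; hence $H_n^{(d)}(X)=C_n^{(d)}(X)=\Z X^{n+1}\cong\Z^{|X|^{n+1}}$.

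For part (i), my plan is to bootstrap directly off Theorem \ref{Theorem 6.6} via the simple algebraic observation that rescaling a boundary operator by a nonzero integer leaves cycles unchanged but multiplies boundaries by $d$. Set $Z_n=\mathrm{Ker}(\partial_n^{(*_g)})$ and $B_n=\mathrm{Im}(\partial_{n+1}^{(*_g)})$. Since $C_n=\Z X^{n+1}$ is free abelian and $d\neq 0$, multiplication by $d$ is injective, so $\mathrm{Ker}(\partial_n^{(d)})=Z_n$ and $\mathrm{Im}(\partial_{n+1}^{(d)})=dB_n$; the augmentation $\partial_0$ is not rescaled, so the same identification persists in degree $0$. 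Hence $\tilde H_n^{(d)}(X)=Z_n/dB_n$. By Theorem \ref{Theorem 6.6} the quotient $\tilde H_n^{(*_g)}(X)=Z_n/B_n\cong\Z^{(|g(X)|-1)|X|^n}$ is free, so the short exact sequence $0\to B_n\to Z_n\to\tilde H_n^{(*_g)}(X)\to 0$ splits, yielding
\[
\tilde H_n^{(d)}(X)\cong (B_n/dB_n)\oplus\tilde H_n^{(*_g)}(X)\cong\Z_d^{\mathrm{rank}(B_n)}\oplus\Z^{(|g(X)|-1)|X|^n}.
\]

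It therefore remains to pin down $\mathrm{rank}(B_n)$. I would invoke the standard rank identities $\mathrm{rank}(C_n)=\mathrm{rank}(Z_n)+\mathrm{rank}(B_{n-1})$ and $\mathrm{rank}(\tilde H_n)=\mathrm{rank}(Z_n)-\mathrm{rank}(B_n)$ to obtain the recursion $\mathrm{rank}(B_n)=|X|^{n+1}-\mathrm{rank}(B_{n-1})-\mathrm{rank}(\tilde H_n^{(*_g)}(X))$ with base case $\mathrm{rank}(B_{-1})=1$ (the rank of the image of the augmentation). Feeding in $\mathrm{rank}(\tilde H_n^{(*_g)}(X))=(|g(X)|-1)|X|^n$ from Theorem \ref{Theorem 6.6} and a one-line induction then give $\mathrm{rank}(B_n)=|X|^{n+1}-|g(X)|\,u_n$, where $u_n$ is determined by $u_0=1$ and $u_n=|X|^n-u_{n-1}$, matching the stated closed form. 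The $g=\mathrm{Id}$ specialization reduces to the elementary identity $(|X|+1)\,u_n=|X|^{n+1}-(-1)^{n+1}$ (obtained by telescoping $u_n=\sum_{k=0}^n(-1)^k|X|^{n-k}$), which rewrites $|X|^{n+1}-|X|\,u_n$ as $u_n-(-1)^n$.

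There is no genuine obstacle; the only point requiring some care is the bookkeeping at degree zero, namely checking that $\partial_0$ is unchanged under the rescaling so $\tilde Z_0^{(d)}=\tilde Z_0^{(*_g)}$ and that the splitting argument still delivers the correct answer $\Z^{|g(X)|-1}\oplus\Z_d^{|X|-|g(X)|}$ there. This matches the formula since $u_0=1$.
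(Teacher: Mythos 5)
Your proposal is correct and follows essentially the same route as the paper: the free part is read off from Theorem \ref{Theorem 6.6}, the torsion is identified as $\mathrm{Im}(\partial_{n+1})\otimes\Z_d$ (your splitting of $0\to B_n\to Z_n\to\tilde H_n^{(*_g)}\to 0$ just makes this assertion explicit), and $\mathrm{rank}\,\mathrm{Im}(\partial_{n+1})$ is computed from the same rank recursion $\mathrm{rank}\,B_n+\mathrm{rank}\,\tilde H_n+\mathrm{rank}\,B_{n-1}=|X|^{n+1}$ with the same base case. The only difference is presentational: you justify the step the paper states without proof.
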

\begin{proof} (i) As long as $d\neq 0$ the free part of the homology does not depend on $d$, so we 
know the free part from Theorem \ref{Theorem 6.6}. We see that  the torsion part is 
$(\partial_{n+1}(C_{n+1}))\otimes \Z_d$, so for a finite $X$ we have to compute the rank of $\partial_{n+1}(C_{n+1})$.
We do this by observing that $$rk \partial_{n+1}(C_{n+1})+ rk \tilde H^{(*_g)}_n +rk \partial_{n}(C_{n})= 
rk C_n(X)= |X|^{n+1}.$$
For example, for $n=0$ we get $(|X|- |g(X)|) + (|g(X)|-1) +1 = |X|$  (we work with the 
reduced homology $\tilde H^{(*_g)}_n$).
Knowing initial data, the rank of homology, and the ranks of the chain groups we compute 
that $rk \partial_{n+1}(C_{n+1}) = |X|^{n+1}- |g(X)||X|^{n}+ |g(X)||X|^{n-1}+...+ (-1)^{n+1}|g(X)| =
|X|^{n+1}-|g(X)|u_n$, and the formula for homology is proven.\\
(ii) Boundary operations are trivial, so the formula follows.
\end{proof}

\section{Multi-term distributive homology}\label{Section 7} 

The first homology theory related to a self-distributive structure was constructed in early 1990s by 
Fenn, Rourke, and Sanderson \cite{FRS} and motivated by (higher dimensional) knot theory\footnote{The recent 
paper by Roger Fenn, \cite{Fenn} states:
"Unusually in the history of mathematics, the discovery of the homology and classifying
space of a rack can be precisely dated to 2 April 1990."}. 
For a rack $(X,*)$, they defined rack homology $H_n^R(X)$ by taking $C^R_n=\Z X^n$ and 
$\partial_n^R: C_n \to C_{n-1}$ is given by $\partial_n^R = \partial_{n-1}^{(*)}-\partial_{n-1}^{(*_0)}$.
Our notation has grading shifted by 1, that is, $C_n(X)= C^R_{n+1}= \Z X^{n+1}$. It is  routine to 
check that $\partial^R_{n-1}\partial_n^R=0$. However, it is an interesting question what properties 
of $*_0$ and $*$ are really used. With relation to the paper \cite{N-P-4} we noticed that 
it is distributivity again which makes $(C^R(X),\partial_n^R)$ a chain complex. More generally we 
observed that if $*_1$ and $*_2$ are right self-distributive and distributive  with respect to each other, 
then $\partial^{(a_1,a_2)}= a_1\partial^{(*_1)}+a_2\partial^{(*_2)}$ leads to a chain complex
(i.e. $\partial^{(a_1,a_2)}\partial^{(a_1,a_2)}=0$). 
Below I answer a more general question: for a finite set $\{*_1,...,*_k\}\subset Bin(X)$ and integers $a_1,...,a_k \in \Z$, 
when is $(C_n,\partial^{(a_1,...,a_k)})$ with $\partial^{(a_1,...,a_k)}=a_1\partial^{(*_1)}+...+a_k\partial^{(*_k)}$ 
a chain complex?
When is $(C_n,d_i^{(a_1,...,a_k)})$ a presimplicial set?
We answer these questions in Lemma \ref{Lemma 7.1}. In particular, for a distributive set $\{*_1,...,*_k\}$ 
the answer is affirmative.

\begin{lemma}\label{Lemma 7.1}\ 
\begin{enumerate}
\item[(i)] If $*_1$ and $*_2$ are right self-distributive operations, then  $(C_n,\partial^{(a_1,a_2)})$ is 
a chain complex if and only if the operations $*_1$ and $*_2$ satisfy:
$$(a*_1b)*_2c + (a*_2b)*_1c = (a*_2c)*_1(b*_2c) + (a*_1c)*_2(b*_1c).$$ 
We call this condition  {\it weak distributivity}.
\item[(ii)] We say that a set $\{*_1,...,*_k\}\subset Bin(X)$ is weakly distributive if each operation is 
right self-distributive and each pair of operations is weakly distributive (with two main cases:  
distributivity $(a*_1b)*_2c = (a*_2c)*_1(b*_2c)$ and chronological distributivity\footnote{I did not see this concept 
considered in literature, but it seems to be important in K.Putyra's work 
on odd Khovanov homology \cite{Put}; see also Proposition \ref{Proposition 7.2}.} 
$(a*_1b)*_2c=(a*_1c)*_2(b*_1c)$).
We have: $(C_n,d_i^{(a_1,...,a_k)})$ is a presimplicial set if and only if  
the  set $\{*_1,...,*_k\}\subset Bin(X)$ is weakly distributive.
\item[(iii)] $(C_n,\partial_n^{(a_1,...,a_k)})$ is a chain complex if and only if   
the  set $\{*_1,...,*_k\}\subset Bin(X)$ is weakly distributive.
\end{enumerate}
\end{lemma}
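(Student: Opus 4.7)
The plan is to reduce all three parts to a bilinear computation in degree~$2$, generalising Example~\ref{Example 6.3} and Proposition~\ref{Proposition 6.4}(i). For part~(i), I first expand
$$\bigl(a_1\partial^{(*_1)}+a_2\partial^{(*_2)}\bigr)^2 = a_1^2\bigl(\partial^{(*_1)}\bigr)^2 + a_2^2\bigl(\partial^{(*_2)}\bigr)^2 + a_1a_2\bigl(\partial^{(*_1)}\partial^{(*_2)}+\partial^{(*_2)}\partial^{(*_1)}\bigr).$$
Example~\ref{Example 6.3} together with the self-distributivity hypothesis annihilates the two diagonal squares, so (for $a_1a_2\neq 0$) the chain-complex condition reduces to the vanishing of the mixed anticommutator. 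To evaluate it, I would repeat the eight-term computation of Example~\ref{Example 6.3} with two different operations, then add the computation with the two operations swapped. The terms involving a single operation, such as $x_1*_1 x_2$ and $x_1*_2 x_2$, pair up between the two orderings and cancel, leaving exactly the four-term expression whose vanishing is the weak-distributivity identity of the statement. Since $C_1$ is free on triples $(x_0,x_1,x_2)$, this forces weak distributivity as a universal relation on~$X$.

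For part~(ii), I would check the presimplicial identities $d_i d_j = d_{j-1} d_i$ for $0\le i<j$, expanded by linearity as a double sum $\sum_{\alpha,\beta}a_\alpha a_\beta\, d_i^{(*_\beta)} d_j^{(*_\alpha)}$. At each output position $l<i$, the compositions $d_i^{(*_\beta)}d_j^{(*_\alpha)}$ and $d_{j-1}^{(*_\beta)}d_i^{(*_\alpha)}$ produce the entries $(x_l *_\alpha x_j)*_\beta (x_i *_\alpha x_j)$ and $(x_l *_\alpha x_i)*_\beta x_j$ respectively, while all the remaining output coordinates agree. The presimplicial identity thus becomes, position by position, a polynomial relation in $(a_1,\ldots,a_k)$ whose diagonal coefficient $a_\alpha^2$ records self-distributivity of $*_\alpha$ and whose symmetrised off-diagonal coefficient $a_\alpha a_\beta$ records weak distributivity of the pair $(*_\alpha,*_\beta)$ applied to $(x_l, x_i, x_j)$. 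Requiring this uniformly in $(a_1,\ldots,a_k)$ is therefore equivalent to the set being weakly distributive. The case $i=0$ behaves trivially, since $d_0$ does not involve any operation.

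Part~(iii) follows from~(ii) in the forward direction, because any presimplicial module satisfies $\partial^2=0$. For the converse, I would specialise the coefficient vector: $(a_1,\ldots,a_k)=e_\alpha$ recovers self-distributivity of $*_\alpha$ via Example~\ref{Example 6.3}, while $(a_1,\ldots,a_k)=e_\alpha+e_\beta$ combined with part~(i) forces weak distributivity of the pair $(*_\alpha,*_\beta)$.

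The main obstacle is the bookkeeping of the degree-$2$ symmetrisation: one must verify that the single-operation terms of $\partial^{(*_1)}\partial^{(*_2)}$ and $\partial^{(*_2)}\partial^{(*_1)}$ cancel exactly in pairs between the two orderings, leaving only the four weak-distributivity terms. Once this core identity is in hand, everything else is essentially formal: the higher-degree face-map identities in~(ii) reduce to the same local triple computation at the positions $(x_l, x_i, x_j)$, just as in the single-operation case treated in Proposition~\ref{Proposition 6.4}(i), and part~(iii) is a direct consequence of~(ii) together with coefficient specialisation.
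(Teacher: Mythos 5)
Your expansion of $\bigl(\partial^{(a_1,a_2)}\bigr)^2$ into diagonal squares plus the mixed anticommutator, and your degree-$2$ computation extracting the four-term weak-distributivity identity, coincide with the paper's treatment of necessity in part (i). The gap is in your part (ii), at the sentence ``while all the remaining output coordinates agree.'' This is false for non-adjacent faces. Take $0<i<j-1$ and a mixed pair $\alpha\neq\beta$: in $d_i^{(*_\beta)}d_j^{(*_\alpha)}$ the coordinate at a position $l$ with $i<l<j$ is $x_l*_\alpha x_j$ (the operation applied \emph{first} acts there and nothing touches it afterwards), whereas in $d_{j-1}^{(*_\beta)}d_i^{(*_\alpha)}$ it is $x_l*_\beta x_j$ (only the operation applied \emph{second} acts there). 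So for $j>i+1$ the two compositions land on \emph{different basis tuples} of the free module $\Z X^{n-1}$, and your ``position by position'' reduction is not legitimate: an identity among formal sums of tuples does not decompose coordinatewise, and the weak-distributivity identity --- which is an equality of two-element multisets in $X$ --- only transfers to an equality of tuples when all the other coordinates already match. The paper's own proof only verifies the adjacent case $d_id_{i+1}=d_id_i$ and asserts the rest, so you cannot appeal to Proposition \ref{Proposition 6.4}(i) here either: even there the non-adjacent identities use distributivity at the head positions, and in the multi-term setting the middle positions introduce a genuinely new obstruction.

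The obstruction is not repairable under weak distributivity alone. Take $a*_fb=f(a)$ and $a*_gb=g(a)$ for arbitrary $f,g:X\to X$; each is right self-distributive and the pair is chronologically distributive, hence weakly distributive. A direct computation on $C_3$ gives
$$\bigl(\partial^{(*_f)}\partial^{(*_g)}+\partial^{(*_g)}\partial^{(*_f)}\bigr)(x_0,x_1,x_2,x_3)=-(fg(x_0),f(x_2))+(fg(x_0),g(x_2))+(gf(x_0),f(x_2))-(gf(x_0),g(x_2)),$$
i.e.\ $(fg(x_0)-gf(x_0),\,g(x_2)-f(x_2))$ in bilinear notation, which is nonzero whenever $fg(x_0)\neq gf(x_0)$ and $f(x_2)\neq g(x_2)$ (e.g.\ $X=\{1,2\}$, $f\equiv 1$, $g$ the transposition). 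So $d_1d_3\neq d_2d_1$ and even $\bigl(\partial^{(*_f)}+\partial^{(*_g)}\bigr)^2\neq 0$ on $C_3$, although the degree-$2$ identity you verified does hold. What your argument actually proves is the equivalence in degree $2$ and the adjacent-index identities; to get the full presimplicial structure you must assume genuine distributivity of the set, in which case the correct bookkeeping is to pair the $(\alpha,\beta)$ summand of $d_i^{}d_j^{}$ with the $(\beta,\alpha)$ summand of $d_{j-1}^{}d_i^{}$ --- under distributivity the head coordinates match by the identity $(x_l*_\beta x_i)*_\alpha x_j=(x_l*_\alpha x_j)*_\beta(x_i*_\alpha x_j)$ and the middle coordinates are both $x_l*_\alpha x_j$. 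You should either add that hypothesis or flag that the ``if'' direction of (ii) and (iii), as stated for weakly distributive sets, breaks down beyond $C_2$.
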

\begin{proof}
We have $\partial_{n-1}^{(a_1,a_2)}  \partial_n^{(a_1,a_2)}=$
$$(a_1\partial_{n-1}^{(*_1)}+a_2\partial_{n-1}^{(*_2)}) (a_1\partial_{n}^{(*_1)}+a_2\partial_{n}^{(*_2)}) =$$
$$a_1^2\partial_{n-1}^{(*_1)}\partial_{n}^{(*_1)} + a_2^2\partial_{n-1}^{(*_2)}\partial_{n}^{(*_2)}
+ a_1a_2(\partial_{n-1}^{(*_1)}\partial_{n}^{(*_2)} + \partial_{n-1}^{(*_2)}\partial_{n}^{(*_1)}) 
=$$
$$a_1a_2(\partial_{n-1}^{(*_1)}\partial_{n}^{(*_2)} + \partial_{n-1}^{(*_2)}\partial_{n}^{(*_1)})$$
To see that the condition $(a*_1b)*_2c + (a*_2b)*_1c = (a*_2c)*_1(b*_2c) + (a*_1c)*_2(b*_1c)$ is necessary, 
let us consider the case $n=2$. We have 
$$(\partial_{1}^{(*_1)}\partial_{2}^{(*_2)}) + \partial_{1}^{(*_2)}\partial_{2}^{(*_1)})(x_0,x_1,x_2)=$$
$$\partial_{1}^{(*_1)}((x_1,x_2) - (x_0*_2x_1,x_2)+ (x_0*_2x_2,x_1*_2x_2)) +$$ 
$$\partial_{1}^{(*_2)}((x_1,x_2) - (x_0*_1x_1,x_2)+ (x_0*_1x_2,x_1*_1x_2)) =$$
$$x_2 - x_1*_1x_2 - x_2 +(x_0*_2x_1)*_1x_2 + x_1*_2x_2 - (x_0*_2x_2)*_1(x_1*_2x_2) + $$
$$x_2 - x_1*_2x_2 - x_2 +(x_0*_1x_1)*_2x_2 + x_1*_1x_2 - (x_0*_1x_2)*_2(x_1*_1x_2) = $$
$$(x_0*_2x_1)*_1x_2 - (x_0*_2x_2)*_1(x_1*_2x_2) + (x_0*_1x_1)*_2x_2 - (x_0*_1x_2)*_2(x_1*_1x_2),$$
which is equal to zero iff  weak distributivity holds.\\
On the other hand, we show below that weak distributivity is sufficient to have  
$d_i^{(a_1,a_2)}d_{i+1}^{(a_1,a_2)}=d_i^{(a_1,a_2)}d_i^{(a_1,a_2)}$ for $0<i<n$ and is sufficient for
 $(C_n,d_i^{(a_1,a_2)})$ 
being a presimplicial module (the other needed equalities $d_id_j=d_{j-1}d_i$ for $i<j$ follow without using any 
special conditions). Namely, we have:
$$(d_i^{(a_1,a_2)}d_{i+1}^{(a_1,a_2)}-d_i^{(a_1,a_2)}d_i^{(a_1,a_2)})(x_0,...,x_n) =$$
$$d_i^{(a_1,a_2)}(a_1((x_0,...,x_i)*_1x_{i+1},x_{i+2},...,x_n) + a_2((x_0,...,x_i)*_2x_{i+1},x_{i+2},...,x_n) - $$
$$(a_1((x_0,...,x_{i-1})*_1x_i,x_{i+1},...,x_n) + a_2((x_0,...,x_{i-1})*_2x_i,x_{i+1},...,x_n))) =$$
$$a_1^2(((x_0,...,x_{i-1})*_1x_{i+1})*_1(x_i*_1x_{i+1}),x_{i+2},...,x_n) - $$
$$(((x_0,...,x_{i-1})*_1x_i)*_1x_{i+1},x_{i+2},...,x_n)) +$$
$$a_2^2(((x_0,...,x_{i-1})*_2x_{i+1})*_2(x_i*_2x_{i+1}),x_{i+2},...,x_n) -$$
$$(((x_0,...,x_{i-1})*_2x_i)*_2x_{i+1},x_{i+2},...,x_n)) +$$
$$a_1a_2(((x_0,...,x_{i-1})*_1x_{i+1})*_2(x_i*_1x_{i+1}),x_{i+2},...,x_n) +$$
$$a_1a_2(((x_0,...,x_{i-1})*_2x_{i+1})*_1(x_i*_2x_{i+1}),x_{i+2},...,x_n) - $$
$$a_1a_2(((x_0,...,x_{i-1})*_1x_i)*_2x_{i+1},x_{i+2},...,x_n))- $$
$$a_1a_2(((x_0,...,x_{i-1})*_2x_i)*_1x_{i+1},x_{i+2},...,x_n)) $$ 
which is equal to zero by the weak distributivity property. This completes our proof of (i); (ii) and (iii) 
follow from this directly.
\end{proof}

There is some justification for studying the concept of chronological-distributivity or weak distributivity,
as every semigroup $A$ (with the property: $xa=xb$, for every $x$, implies $a=b$) 
can be embedded as a chronological-distributive semigroup in $Bin(A)$ (compare Proposition \ref{Proposition 2.11}):
\begin{proposition}\label{Proposition 7.2}
\begin{enumerate}
\item[(i)] For $f:X \to X$ we define $*_f$ by $a*_fb=f(a)$. We have $*_f*_g= *_{gf}$ as 
$a*_f*_gb= (a*_fb)*_gb=g(f(a))= a*_{gf}b$. 
\item[(ii)]
For any pair of functions $f,g:X\to X$ the pair $(*_f,*_g)$ is chronologically distributive; namely we have:
$$(a*_fb)*_gc= g(f(a)),$$
$$(a*_fc)*_g((a*_fc)= g(a*_fc)=g(f(a)).$$
\item[(iii)] Any semigroup $A$ with the property: $xa=xb$, for every $x$, implies $a=b$,
 is a chronologically distributive subsemigroup of $Bin(A)$.
\item[(iv)] Any commutative semigroup $A$ with the property: $xa=xb$, for every $x$, implies $a=b$,
 is a distributive subsemigroup of $Bin(A)$.
\end{enumerate}
\end{proposition}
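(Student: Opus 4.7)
The plan is to handle the four parts in turn. Parts (i) and (ii) are direct computations, already laid out inside the statement itself: I would simply verify the identities $a*_f*_gb = g(f(a)) = a*_{gf}b$ (unfolding the definition of $*_f$ twice) and $(a*_fb)*_gc = g(f(a)) = g(a*_fc) = (a*_fc)*_g(b*_fc)$, noting that no hypothesis on the maps $f,g$ is needed for either calculation. These two statements give, respectively, a monoid-like composition rule and the chronological-distributivity of any pair of ``left-trivial'' operations.

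For (iii), my strategy is to realize $A$ inside $Bin(A)$ via right multiplications. Let $R_a: A\to A$ denote $x\mapsto xa$, and define $\tau: A \to Bin(A)$ by $\tau(a)=*_{R_a}$, so that $x\, \tau(a)\, y = xa$. Associativity of $A$ gives $R_b\circ R_a = R_{ab}$, and combining this with part (i) yields
$$\tau(a)\tau(b) = *_{R_a}*_{R_b} = *_{R_b\circ R_a} = *_{R_{ab}} = \tau(ab),$$
so $\tau$ is a semigroup homomorphism. Injectivity is immediate from the hypothesis on $A$: the equality $\tau(a)=\tau(b)$ means $xa = xb$ for every $x\in A$, which forces $a=b$. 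Finally, every element of the image $\tau(A)$ has the form $*_f$ for some $f:A\to A$, so by part (ii) each pair in $\tau(A)$ is chronologically distributive, giving the required embedding.

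For (iv), I would upgrade chronological distributivity to ordinary right distributivity using the commutativity assumption. Using the same $\tau$ and computing both sides of the right-distributivity axiom for $*_{R_p}$ and $*_{R_q}$,
$$(x*_{R_p}y)*_{R_q}z = R_q(R_p(x)) = xpq, \qquad (x*_{R_q}z)*_{R_p}(y*_{R_q}z) = R_p(R_q(x)) = xqp.$$
These expressions agree for every $x$ exactly when $xpq = xqp$ for all $x$; by the cancellation-type hypothesis on $A$ this collapses to $pq=qp$, which holds since $A$ is commutative. Hence every pair in $\tau(A)$ is right distributive, and $\tau(A)$ is a distributive subsemigroup of $Bin(A)$.

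There is no real obstacle in this proof; the only conceptual point worth flagging is that the embedding must use \emph{right} multiplications rather than left ones, so that the semigroup multiplication of $A$ matches the composition convention $*_f*_g = *_{gf}$ recorded in part (i). Everything else is a mechanical assembly of parts (i) and (ii) together with the chosen cancellation hypothesis.
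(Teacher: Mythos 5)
Your proof is correct and is essentially the paper's intended argument: the paper simply defers to Proposition \ref{Proposition 2.11}, whose map $\tau$ (defined by $x\,\tau(a)\,y = xa$) is exactly your embedding via right multiplications $*_{R_a}$, with the homomorphism, injectivity, and (chronological) distributivity checks matching yours point for point. The only cosmetic remark is that in part (iv) the cancellation hypothesis is not actually needed to conclude $xpq=xqp$ from commutativity (it only enters for injectivity of $\tau$), but this does not affect the validity of your argument.
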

\begin{proof} The proof is a simple application of ideas from Proposition \ref{Proposition 2.11}. 
\end{proof}

\subsection{From distributivity to associativity}

We observed that to linearly combine two self-distributive operations into a new operation
we need weak distributivity. We can ask the similar question for associative operations, say $*_{\alpha}$ and
$*_{\beta}$ on $X$. For $\partial^{(a,b)}= a\partial^{\alpha}+ b\partial^{\beta},$ is it a boundary
operation? We consider group or Hochschild homology. A sufficient condition is that
$\partial^{\alpha}\partial^{\beta} = -\partial^{\beta}\partial^{\alpha}$.
This allows us not only to create linear combinations of boundary operations but also
to create a chain bicomplex using $\partial^{\alpha}$ horizontally and $\partial^{\beta}$ vertically.
The condition  $\partial^{\alpha}\partial^{\beta} = -\partial^{\beta}\partial^{\alpha}$ follows from:
$$(a*_{\alpha}b)*_{\beta}c + (a*_{\beta}b)*_{\alpha}c = $$
$$ a*_{\beta}(b*_{\alpha} c)+ a*_{\alpha}(b*_{\beta}c)$$

I do not know a good name for this so I will call it weak associativity 
(following the terminology from the distributive case), as it is a
combination of associativity $(a*_{\alpha}b)*_{\beta}c= a*_{\alpha}(b*_{\beta}c)$, and
chronological associativity (that is: $(a*_{\alpha}b)*_{\beta}c= a*_{\beta}(b*_{\alpha}c)$).

Of course weak associativity follows from each, associativity and chronological-associativity, separately.
\\ \ \ \\
\centerline{\psfig{figure=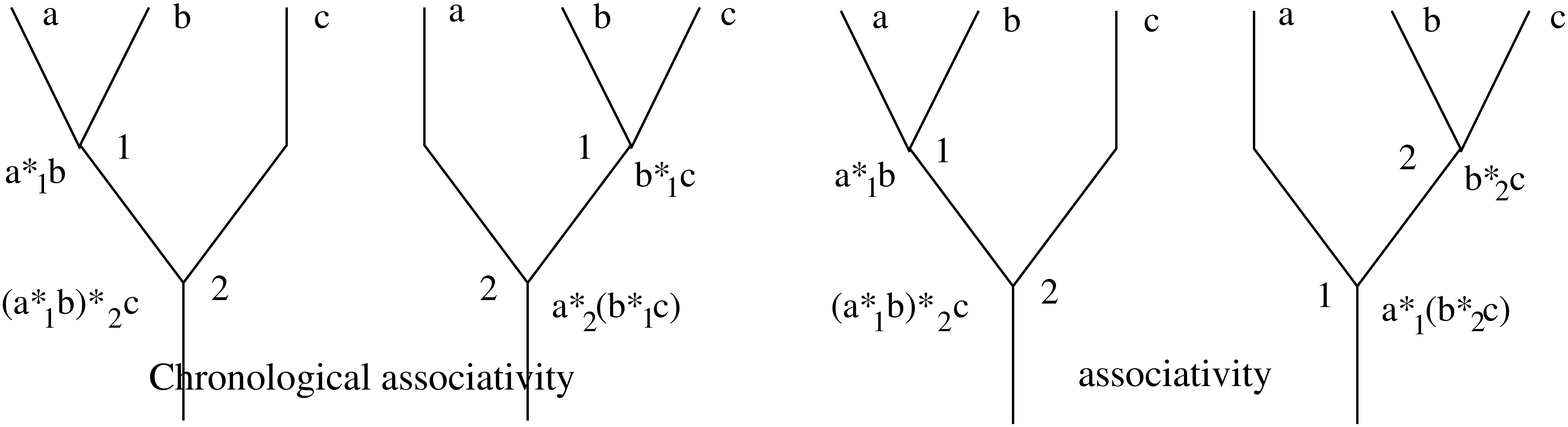,height=3.6cm}}
\  \\ 
{\bf Checking for $n=3$ and group homology}:\\
Let  $*_{\alpha}$ and $*_{\beta}$ be two associative operations on a set $X$. We have:
$$(\partial^{\beta}\partial^{\alpha}+ \partial^{\alpha}\partial^{\beta})(x_1,x_2,x_3)= $$
$$\partial^{\beta}((x_2,x_3)-(x_1*_{\alpha}x_2,x_3)+ (x_1,x_2*_{\alpha}x_3)- (x_1,x_2))+$$
$$\partial^{\alpha}((x_2,x_3)-(x_1*_{\beta}x_2,x_3)+ (x_1,x_2*_{\beta}x_3)- (x_1,x_2))=$$
$$x_3-x_2*_{\beta}x_3+x_2 -x_3 + (x_1*_{\alpha}x_2)*_{\beta}x_3 -x_1*_{\alpha}x_2 + $$
$$ x_2*_{\alpha}x_3 - x_1*_{\beta}(x_2*_{\alpha}x_3)+ x_1 -x_2 +x_1*_{\beta}x_2 -x_1 +$$
$$x_3-x_2*_{\alpha}x_3+x_2 -x_3 + (x_1*_{\beta}x_2)*_{\alpha}x_3 -x_1*_{\beta}x_2 + $$
$$ x_2*_{\beta}x_3 - x_1*_{\alpha}(x_2*_{\beta}x_3)+ x_1 -x_2 +x_1*_{\alpha}x_2 -x_1 =$$
$$(x_1*_{\alpha}x_2)*_{\beta}x_3 - x_1*_{\beta}(x_2*_{\alpha}x_3)+ $$
$$(x_1*_{\beta}x_2)*_{\alpha}x_3 - x_1*_{\alpha}(x_2*_{\beta}x_3)$$
which is equal to zero iff weak associativity holds.

\section{Techniques to study multi-term distributive homology}

\subsection{The remarkable map $f:X^{n+1}\to X^{n+1}$;\\ 
 $f(x_0,x_1,...,x_{n-1},x_n) = (x_0*x_1*...*x_n, x_1*...*x_n,...,x_{n-1}*x_n,x_n)$}\ \\

I noticed this very interesting map only in September 2010, but it looks similar to
the well known change of coordinates in homology of groups.

Let $*_0$ denote the trivial right action on $X$ (i.e. $a*_0b=a$), and let operations $*,*_1, *_2,...*_k$ be elements
of a  distributive submonoid of $Bin(X)$, that is, they are  right self-distributive operations on
a set $X$ which are distributive with respect to  another.\footnote{Historical note: 
The concept of a monoid of operations on a set $X$, $Bin(X)$, can be found in a classical literature, e.g. \cite{R-S}, 
however a multi-term distributive homology which followed, while motivated by rack and quandle homology,  
was only conceived in July 2010 at the end
of my visit to Gdansk and before Knots in Poland III. Seeds of the concepts
 were in the paper \cite{N-P-4} and the following:
\begin{observation}
\begin{enumerate}
\item[(i)] If $*: X\times X$ is a right self-distributive binary operation on $X$ then
$$*^k=
\underbrace{**...*}_{\mbox{$k$-times}}: X\times X \to X, $$
is also self-distributive.
\item[(ii)] If $*_1$ and $*_2$ are right self-distributive operations that are also right distributive with respect 
to each other then the composition $*_1*_2$ is right self-distributive.
\item[(iii)] $*_0$ defined by $a*_0b=a$ is right distributive with respect to any other operation
$(a*b)*_0c= (a*_0c)*(b*_0c)$ and $(a*_0b)*c= (a*c)*_0(b*c)$.
\item[(iv)] If two binary operations $*_1$ and $*_2$ are distributive  with respect to each other then
$$\partial^{(*_2)}\partial^{(*_1)}=-\partial^{(*_1)}\partial^{(*_2)}.$$
\item[(v)] If $*_1$ and $*_2$ are self-distributive and distributive   with respect to each other then
$\partial^{(a_1,a_2)}= a_1\partial^{(*_1)}+a_2\partial^{(*_2)}$ leads to a chain complex
(i.e. $\partial^{(a_1,a_2)}\partial^{(a_1,a_2)}=0$).
\end{enumerate}
 \end{observation}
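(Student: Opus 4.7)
The plan is to dispatch the five parts in an order that lets each one rest on the previous ones and on results already established, so that almost no fresh calculation is needed.

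I would begin with (iii), which is purely definitional: since $a *_0 b = a$ for all $a,b$, both identities $(a*b)*_0 c = a*b = (a *_0 c)*(b *_0 c)$ and $(a *_0 b)*c = a*c = (a*c) *_0 (b*c)$ reduce to the trivial $a *_0 c = a$ and $b *_0 c = b$. Part (ii) is then exactly Lemma \ref{Lemma 2.7}(iii) applied with an empty ``background'' set $S$, giving self-distributivity of $*_1 *_2$ directly. Part (i) follows from (ii) by induction on $k$: the base $k=1$ is the hypothesis, and in the inductive step, having that $*$ and $*^k$ are each self-distributive and mutually distributive, Lemma \ref{Lemma 2.7}(iii) yields self-distributivity of $*^{k+1} = * \, *^k$, while mutual distributivity of $*$ and $*^{k+1}$ is handed to us by Lemma \ref{Lemma 2.7}(i)--(ii).

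For (iv), the cleanest strategy is to reduce to Lemma \ref{Lemma 7.1}. Mutual distributivity of $*_1, *_2$ implies weak distributivity term-by-term: $(a *_1 b) *_2 c = (a *_2 c) *_1 (b *_2 c)$ and $(a *_2 b) *_1 c = (a *_1 c) *_2 (b *_1 c)$, so their sum matches the weak distributivity equation trivially. Applying Lemma \ref{Lemma 7.1} with $a_1 = a_2 = 1$ gives $(\partial^{(*_1)} + \partial^{(*_2)})^2 = 0$, and expanding
\[
(\partial^{(*_1)} + \partial^{(*_2)})^2 = (\partial^{(*_1)})^2 + (\partial^{(*_2)})^2 + \partial^{(*_1)}\partial^{(*_2)} + \partial^{(*_2)}\partial^{(*_1)},
\]
together with the one-term identity $(\partial^{(*_i)})^2 = 0$ (which is self-distributivity, already verified in Example \ref{Example 6.3} and Proposition \ref{Proposition 6.4}), the anticommutation $\partial^{(*_1)} \partial^{(*_2)} = -\partial^{(*_2)} \partial^{(*_1)}$ falls out.

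Finally, part (v) is an immediate consequence of (iv): expanding $(\partial^{(a_1,a_2)})^2 = a_1^2 (\partial^{(*_1)})^2 + a_2^2 (\partial^{(*_2)})^2 + a_1 a_2 \bigl(\partial^{(*_1)} \partial^{(*_2)} + \partial^{(*_2)} \partial^{(*_1)}\bigr)$, the first two terms vanish by self-distributivity and the last vanishes by (iv). The only step requiring genuine care is the bookkeeping in (iv), but since Lemma \ref{Lemma 7.1} has already performed the relevant face-level computation, no real obstacle arises; the whole Observation is essentially a packaged corollary of Lemma \ref{Lemma 2.7} together with Lemma \ref{Lemma 7.1}.
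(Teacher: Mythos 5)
Parts (i), (ii), (iii), and (v) are handled correctly, and your overall strategy --- assembling the Observation from Lemma \ref{Lemma 2.7}, the remark on $*_0$ following Proposition \ref{Proposition 2.1}, and Lemma \ref{Lemma 7.1} --- is exactly how the paper intends this footnoted Observation to be read (it gives no separate proof). In particular, your induction for (i) correctly carries the mutual distributivity of $*$ and $*^k$ along as part of the inductive hypothesis; alternatively one can simply quote Proposition \ref{Proposition 2.4}(ii) for the monoid generated by $\{*\}$.

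The one genuine gap is in (iv). As stated, (iv) assumes only that $*_1$ and $*_2$ are distributive with respect to each other; it does \emph{not} assume self-distributivity --- compare with (v), where self-distributivity is listed as a separate hypothesis. Your argument uses self-distributivity twice: once to satisfy the hypothesis of Lemma \ref{Lemma 7.1} (which begins ``If $*_1$ and $*_2$ are right self-distributive operations\ldots''), and once to discard $(\partial^{(*_1)})^2$ and $(\partial^{(*_2)})^2$ from the expansion of $(\partial^{(*_1)}+\partial^{(*_2)})^2$. So what you prove is (iv) under the extra hypotheses of (v), not (iv) itself. The repair is already implicit in the proof of Lemma \ref{Lemma 7.1}: only cross terms appear in $\partial^{(*_1)}\partial^{(*_2)}+\partial^{(*_2)}\partial^{(*_1)}$, and the mixed face relations
$$d_i^{(*_1)}d_j^{(*_2)}=d_{j-1}^{(*_2)}d_i^{(*_1)}, \qquad d_i^{(*_2)}d_j^{(*_1)}=d_{j-1}^{(*_1)}d_i^{(*_2)} \qquad (i<j)$$
hold with no hypothesis when $i=0$, and for $i\geq 1$ amount precisely to the two mutual-distributivity identities $(a*_1b)*_2c=(a*_2c)*_1(b*_2c)$ and $(a*_2b)*_1c=(a*_1c)*_2(b*_1c)$; the standard sign bookkeeping then gives the anticommutation directly, with no appeal to self-distributivity. (Equivalently: in the $a_1a_2$ block of the displayed computation in the proof of Lemma \ref{Lemma 7.1}, the four terms cancel in pairs under mutual distributivity alone.) With (iv) established in this generality, your deduction of (v) stands as written.
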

}\\
Let $f=f^{(*)}: RX^n \to RX^{n} $ be given by\footnote{We use a standard convention for products 
in non-associative algebras, called the left normed convention, that is, whenever parentheses are
omitted in a product of elements $a_1$, $a_2,\ldots,$ $a_n$ of $X$ then
$a_1*a_2*\ldots *a_n=((\ldots ((a_1*a_2)*a_3)*\ldots)*a_{n-1})*a_n$ (left association),
for example, $a*b*c=(a*b)*c$).}: 
$$f(x_0,x_1,...,x_{n-1},x_n) = (x_0*x_1*...*x_n, x_1*...*x_n,...,x_{n-1}*x_n,x_n)$$
and $\partial^{(*)}(x_0,x_1,...,x_{n-1},x_n) = \sum_{i=0}^n (-1)^i(x_0*x_i,...,x_{i-1}*x_i, x_{i+1},...,x_n)$\
then    $ f^{(*)}\partial^{(*_2)}  = \partial^{(*_1)} f^{(*)} $ where $*_2= **_1$ 
(recall that the composition of operations is: $a (**_1) b = (a*b)*_1 b$), as
the following calculation demonstrates:\\
$f^{(*)}\partial^{(*_2)}(x_0,...,x_n)= $
$$ \sum_{i=0}^n (-1)^i(x_0*...*x_{i-1}*_2x_i*x_{i+1}*...*x_n,
x_{i-1}*_2x_i*x_{i+1}*...*x_n,x_{i+1}*...*x_n,...,x_n)$$
and
$\partial^{(*_1)}f^{(*)}(x_1,...,x_n)= $
$$\sum_{i=1}^n (-1)^i (x_0*...*x_{i-1}*x_i*_1x_i*x_{i+1}*...*x_n,
x_{i-1}*x_i*_1x_i*x_{i+1}*...*x_n,x_{i+1}*...*x_n,...,x_n).
$$
Here are interesting applications/special cases:\\
\begin{corollary}\label{Corollary 8.2}
\begin{enumerate}
\item[(i)]
Consider the multi-term boundary operation $\partial^{(a_1,...,a_n)}= \sum_{i=1}^k a_i\partial^{(*_i)}$,
then $f^{(*)}$ is a chain map from the chain complex on $\Z X^{n+1}$ with a composite boundary 
operation $*\circ\partial^{(a_1,...,a_n)}   \stackrel{def}{=} 
\sum_{i=1}^k a_i\partial^{(**_i)}$ to $(\Z X^{n+1},\partial^{(a_1,...,a_n)})$.\\
 Thus if $*$ is invertible (like in a rack) then
this chain map is invertible and induces an isomorphism of homology. In particular: 
\item[(ii)] If $\partial^{(*)}$ is a one-term operation with invertible $*$ then it has the same homology
as $\partial^{(*_0)}$ which is acyclic.
Here let us stress that we proved acyclicity for one-term homology for racks (for one-term homology
 we can prove acyclicity in a more general case: it suffices to
assume that there is $b$ such that $*_b$ is a bijection (as usually $*_b(a)=a*b$)).
(See Theorem \ref{Theorem 6.6} for examples of shelves that are not  racks and with a chain complex that is not acyclic). 
\item[(iii)] In classical (two-term) rack homology ($\partial = \partial^{(*_0)} -\partial^{(*)}$)
the above result gives an isomorphism with the chain complex $\bar\partial = \partial^{(\bar *)} -\partial^{(*_0)}$
which describes the classical homology of the dual complex ($\bar *$ in place of $*$)\footnote{The observation 
that rack or quandle
homology are the same for $(X;*)$ for $(X;\bar *)$ was proven first by S.Kamada and was known
to the authors of \cite{FRS}.}. 
\item[(iv)] More generally, we can consider any two-term complex with a boundary operation
 $a_1\partial^{(*_1)} + a_2\partial^{(*_2)}$ and for an invertible $*_1$
we get an isomorphic complex with $a_1\partial^{(*_0)} +a_2\partial^{(\bar*_1*_2)}$. 
This can be interpreted as saying that
any 2-term homology of racks is equivalent to the twisted homology \cite{CES-1} 
$\partial^T= t \partial^{0} -\partial^{1}$ (noninvertible $a_2$ gives slightly more possibilities).\\
Notice that, on the way from the twisted homology of $(X;*)$ and its dual $(X;\bar *)$, we also invert $t$.
\end{enumerate}
\end{corollary}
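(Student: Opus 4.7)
The plan is to derive all four parts from the intertwining identity $f^{(*)}\partial^{(*_2)} = \partial^{(*_1)}f^{(*)}$ (valid whenever $*_2 = **_1$) that was just verified in the displayed calculation preceding the corollary. Multiplying by $a_i$ and summing, with the choices $*_1 = *_i$ and $*_2 = **_i$, yields
$$f^{(*)}\circ \Bigl(\sum_{i=1}^k a_i\partial^{(**_i)}\Bigr) = \Bigl(\sum_{i=1}^k a_i\partial^{(*_i)}\Bigr)\circ f^{(*)},$$
which is exactly the chain-map statement in (i). To promote it to a homology isomorphism when $*$ is invertible, I would prove that the underlying set map $f^{(*)}\colon X^{n+1}\to X^{n+1}$ is a bijection.

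The bijectivity is a right-to-left induction. The last coordinate of $f(x_0,\ldots,x_n)$ is $x_n$, so $x_n$ is read off directly. Given $x_{i+1},\ldots,x_n$, the $i$-th coordinate has the left-normed form $y_i = (\cdots((x_i*x_{i+1})*x_{i+2})\cdots)*x_n$, from which $x_i$ is recovered by applying $\bar *_{x_n},\bar *_{x_{n-1}},\ldots,\bar *_{x_{i+1}}$ in turn; these are bijections since $*$ is invertible. Hence $f^{(*)}$ is an invertible chain map and induces an isomorphism on homology, finishing (i).

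Parts (ii)--(iv) then follow by substitution, using only the two tautological identities $\bar **=*_0$ and $\bar **_0=\bar *$ in $Bin(X)$. For (iv), take $*:=\bar *_1$ in (i); the source complex picks up the boundary
$$a_1\partial^{(\bar *_1 *_1)} + a_2\partial^{(\bar *_1 *_2)} = a_1\partial^{(*_0)} + a_2\partial^{(\bar *_1 *_2)},$$
and bijectivity of $f^{(\bar *_1)}$ (equivalent to invertibility of $*_1$) yields the stated isomorphism. Part (iii) is the specialization $(a_1,a_2)=(1,-1)$, $(*_1,*_2)=(*,*_0)$: the source complex becomes $\partial^{(*_0)}-\partial^{(\bar *)}$, which is, up to an overall sign, the rack complex of the dual rack $(X;\bar *)$ and so has the same homology. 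For (ii), specialize further to one term and set $*_1:=*_0$, forcing $*_2=**_0=*$; the intertwining becomes $f^{(*)}\partial^{(*)}=\partial^{(*_0)}f^{(*)}$, giving the homology isomorphism with the $*_0$-complex, whose augmented version is acyclic by the observation at the start of Subsection \ref{Subsection 6.1} (for any $b$, $(*_0)_b=\mathrm{id}$ is invertible).

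The only step with real content is the bijectivity of $f^{(*)}$; everything else is formal linearity plus the two monoid identities above. No independent verification of $\partial^2=0$ is needed on either side, since each operation in sight sits in a weakly distributive set and Lemma \ref{Lemma 7.1} already certifies that the corresponding multi-term boundary squares to zero.
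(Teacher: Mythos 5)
Your proposal is correct and takes essentially the same route as the paper: Corollary 8.2 is presented there precisely as an application of the intertwining identity $f^{(*)}\partial^{(**_1)}=\partial^{(*_1)}f^{(*)}$ computed just before it, and you derive (i)--(iv) from that identity by linearity, the monoid relations $\bar{*}*=*_0$ and $**_0=*$, and the (correct) right-to-left induction showing $f^{(*)}$ is a bijection when $*$ is invertible. The only detail you add beyond the paper's sketch is the explicit inversion of $f^{(*)}$, which is exactly the point the paper leaves implicit.
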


\subsection{Splitting multi-term distributive homology into degenerate and normalized parts}\label{Subsection 8.2}

For a quandle $(X;*)$ and its chain complex $(C_n,\partial^R)$, Carter, Kamada, and Saito at al. \cite{Car,CJKLS,CKS} 
considered the degenerate subcomplex and its quotient which they call {\it quandle chain complex}.
Litherland and Nelson \cite{L-N} proved that this complex splits. 
Their result extends to multi-spindle $(X;*_1,...,*_k)$ (that is, a multi-shelf with every operation idempotent).
 Our proof follows that given in \cite{N-P-2}.

Consider a multi-spindle $(X;*_1,...,*_k)$ and its chain complex $C_n(X)=\Z X^{n+1}$,
 $\partial^{(a_1,...,a_k)}= 
\sum_{i=1}^ka_i\partial^{(*_i)}$. Recall that we deal with a weak simplicial module $(C_n,d_i,s_i)$ with 
$d_i=d_i^{(a_1,...,a_k)}= \sum_{i=1}^ka_id^{(*_i)}_i$ and \\
 $s_i(x_0,...x_n)= (x_0,...,x_{i-1},x_i,x_i,x_{i+1},...,x_n)$. Thus, 
we know, in general, that $C_n^D=span(s_0C_{n-1},s_1C_{n-1},...,s_{n-1}C_{n-1})$ is a subchain complex of 
$(C_n,\partial^{(a_1,...,a_k)})$. This complex is usually not acyclic but it always splits. 
Let $C^{Norm}=C/C^D$ be the quotient complex, called the {\it normalized} complex of a multi-spindle.

\begin{theorem}\label{Theorem 8.3}
\begin{enumerate}
\item[(i)] Consider the short exact sequence of chain complexes:
$$0 \to C_n^D(X)  \to C_n(X) \to C_n^{Norm}(X) \to 0$$
Then this complex splits with a split map $\alpha : C^{Norm}(X) \to C_n(X)$ given by the formula:
$$\alpha(x_0,x_1,x_2,...,x_n)= (x_0,x_1-x_0,x_2-x_1,...,x_n-x_{n-1}).$$
We will the use multilinear convention as in \cite{N-P-2}, e.g. $\alpha(x_0,x_1,x_2)=$ 
$$(x_0,x_1-x_0,x_2-x_1)= (x_0,x_1,x_2)-(x_0,x_0,x_2) - (x_0,x_1,x_1)+ (x_0,x_0,x_1).$$
\item[(ii)] $H_n(X) = H_n^D(X) \oplus H_n^{Norm}(X)$.
\end{enumerate}
\end{theorem}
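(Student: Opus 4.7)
The plan is to establish part (i) by verifying three properties of $\alpha$---well-definedness on the quotient, that it splits the projection $\pi\colon C_n\to C^{Norm}_n$, and that it is a chain map---and then to deduce part (ii) immediately from the resulting chain-level splitting. First I would extend $\alpha$ to a $\Z$-linear endomorphism of $C_n$ via the multilinear convention and check it kills $C^D_n$: if $(x_0,\ldots,x_n)$ has $x_i=x_{i+1}$, the $(i+1)$-st slot of the expression $(x_0,x_1-x_0,\ldots,x_n-x_{n-1})$ becomes $x_{i+1}-x_i=0$, so the multilinear expansion vanishes. Hence $\alpha$ descends to a well-defined map $\alpha\colon C^{Norm}_n\to C_n$.

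Next I would expand $\alpha(x_0,\ldots,x_n)=\sum_{S\subseteq\{1,\ldots,n\}}(-1)^{|S|}D_S(x_0,\ldots,x_n)$, where $D_S$ replaces, for each $i\in S$, the $i$-th slot by $x_{i-1}$ and leaves the other slots unchanged. For every non-empty $S$, letting $j$ be its smallest element, both the $(j-1)$-st and $j$-th slots of $D_S(x_0,\ldots,x_n)$ equal $x_{j-1}$, so $D_S(x_0,\ldots,x_n)\in C^D_n$. Only the term $S=\emptyset$ survives modulo $C^D_n$, yielding $\pi\alpha=\mathrm{id}_{C^{Norm}}$. The two-line illustration given in the statement for $\alpha(x_0,x_1,x_2)$ is the $n=2$ instance of this expansion.

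The core step, and the main obstacle, is to verify $\partial\alpha=\alpha\partial$ as maps $C_n\to C_{n-1}$, regarding $\alpha$ as extended to an endomorphism of $C_\bullet$. By $\Z$-linearity in the coefficients $(a_1,\ldots,a_k)$, it suffices to handle each one-term boundary $\partial=\partial^{(*_j)}$ separately, where $*_j$ is idempotent. Expanding both $\partial\alpha$ and $\alpha\partial$ multilinearly produces families of terms indexed by the subset $S\subseteq\{1,\ldots,n\}$ chosen by $\alpha$ together with a face index $i$ coming from $\partial$. The terms in which the face hits a pair of slots that $\alpha$ has already equalized collapse via the idempotency identity $x*_j x=x$, while the remaining terms pair up between the two orders $\partial\circ\alpha$ and $\alpha\circ\partial$ after a careful re-indexing. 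This is the direct generalization of the computation of Litherland--Nelson \cite{L-N} for a single quandle operation, following the strategy of \cite{N-P-2} in the multi-shelf setting.

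Once $\alpha$ is a chain-map section of $\pi$, the short exact sequence $0\to C^D\to C\to C^{Norm}\to 0$ splits as a sequence of chain complexes, so $C_\bullet=C^D_\bullet\oplus\alpha(C^{Norm}_\bullet)$, with $\alpha$ a chain isomorphism onto the second summand. Passing to homology then yields $H_n(X)=H_n^D(X)\oplus H_n^{Norm}(X)$, which is part (ii). The delicate point throughout the argument is the chain-map identity in Step 3: $\partial$ mixes adjacent slots through the shelf operations $*_j$, while $\alpha$ differences adjacent slots, and it is precisely the idempotency axiom $x*_j x=x$ of the multi-spindle that ensures the resulting extra terms cancel so that the two orders agree.
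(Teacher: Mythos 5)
Your overall strategy coincides with the paper's: extend $\alpha$ multilinearly, check $\alpha(C^D_n)=0$, deduce $\pi\alpha=\mathrm{id}_{C^{Norm}}$ from $(\alpha-\mathrm{id})(C_n)\subset C^D_n$, reduce the chain-map property to a single idempotent operation by linearity, and then read off (ii) from the chain-level splitting. The first two steps and the reduction are fine as written. The problem is that the third step---the only place where the theorem has real content---is not carried out: you assert that after multilinear expansion ``the remaining terms pair up between the two orders $\partial\circ\alpha$ and $\alpha\circ\partial$ after a careful re-indexing,'' and that is precisely the claim that needs proof. Moreover, read naively it is false: the two composites do \emph{not} cancel face by face. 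The paper's Lemma \ref{Lemma 8.4} computes the commutator for a single face map and finds a nonzero residue: for $0<i<n$,
$$d_i\alpha-\alpha d_i=r_{i-1}+r_i,\qquad r_i=-\bigl((x_0,x_1-x_0,\dots,x_i-x_{i-1})*x_i,\;x_{i+2}-x_{i+1},\dots,x_n-x_{n-1}\bigr),$$
with $r_{-1}=r_n=0$ (the paper's ``$r_{-1}=r_0=0$'' is a typo, as its own formula for $r_0$ shows). Idempotency enters exactly once, to rewrite the leftover slot $x_{i-1}*x_i-x_i$ as $(x_{i-1}-x_i)*x_i$ so that it is absorbed into $r_i$.

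The identity $\partial^{(*)}\alpha=\alpha\partial^{(*)}$ then follows only upon summing over $i$ with signs, because $\sum_{i=0}^{n}(-1)^i(r_{i-1}+r_i)$ telescopes to zero; each residue $r_i$ is produced by the $i$-th face with sign $(-1)^i$ and killed by the $(i+1)$-st face with sign $(-1)^{i+1}$. So the cancellation is across adjacent face indices of $\partial\alpha-\alpha\partial$, not a termwise matching of $\partial\alpha$ against $\alpha\partial$. Until you either prove this commutator identity (or some equivalent bookkeeping of the multilinear expansion indexed by pairs $(S,i)$), the chain-map property, and hence the splitting and part (ii), is not established. Everything else in your proposal is sound, and once Lemma \ref{Lemma 8.4} is in place your argument closes exactly as the paper's does.
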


\begin{proof} (i) First observe that $\alpha$ is well defined since $\alpha(s_i(x_0,...,x_{n-1}))= 
(x_0,...,x_i-x_i,...,x_{n-1}) = 0$, so $\alpha (C_n^D)=0$. We also have  $\beta\alpha = Id_{C^{Norm}}$, because 
$(\alpha - Id)(C_n) \subset C^D_n$ and $\beta (C_n^D)=0$. This shows that $\alpha$ splits $\{C_n\}$ as a graded 
group. To show this split of a chain complex we should show that $\alpha$ is a chain map, that is,  
$\partial^{(a_1,...,a_k)}\alpha = \alpha\partial^{(a_1,...,a_k)}$. Of course it suffices to prove the relation 
$\partial^{(*_i)}\alpha = \alpha\partial^{(*_i)}$ for any $i$.
This follows from Lemma \ref{Lemma 8.4} below. Part (ii) follows directly from (i).
\end{proof}
 \begin{lemma}\label{Lemma 8.4}
\begin{enumerate}
\item[(i)] 
For any spindle $(X,*)$ and its related presimplicial module $(C_n,d_i)$ we have
$$d_i\alpha - \alpha d_i = r_{i-1}+ r_i, \ for \ 0\leq i \leq n, \ where$$ 
$$r_{−1} = r_0 = 0 \text{ and for } 0 < i < n:$$
 $$    r_i = −((x_0 , x_1 - x_0 , ..., x_i - x_{i-1} )* x_i , x_{i+2} - x_{i+1} , ..., x_n - x_{n-1})). $$
In particular, $r_0 = -(x_0 , x_2 - x_1 , ..., x_n - x_{n-1}))$ and \\
$r_{n-1} = -(x_0 , x_1 - x_0 , ..., x_{n-1} - x_{n-2} ) * x_{n-1}$.
\item[(ii)] $\partial^{(*)}_n \alpha - \alpha\partial^{(*)}_n = 0$.
\end{enumerate}
\end{lemma}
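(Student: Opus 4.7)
The plan is to establish part (i) by expanding $\alpha$ as a signed sum of basis tensors, applying $d_i$ termwise, and regrouping; part (ii) then follows from part (i) by a telescoping argument.

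For part (i), start from the multilinear expansion
\[
\alpha(x_0,\ldots,x_n) = \sum_{\epsilon \in \{0,1\}^n} (-1)^{|\epsilon|}\,(x_0,\, x_{1-\epsilon_1},\, \ldots,\, x_{n-\epsilon_n}),
\]
and similarly expand $\alpha d_i(x_0,\ldots,x_n)$ after first applying $d_i$. The case $i=0$ is immediate, as $d_0$ just drops the first coordinate: a one-line comparison gives $d_0\alpha - \alpha d_0 = -(x_0,y_2,\ldots,y_n) = r_0$, matching $r_{-1}+r_0$ with the convention $r_{-1}=0$. For $0<i\le n$, I would split the expansion of $d_i\alpha$ according to the value of $\epsilon_i$, which determines the ``acting element'' $x_{i-\epsilon_i}$ used in the presimplicial formula.

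The $\epsilon_i=0$ family (acting by $x_i$, the same element acting in $\alpha d_i$) further splits by $\epsilon_{i+1}$. Terms with $\epsilon_{i+1}=0$ cancel exactly against the $\delta_i=0$ terms of $\alpha d_i$, while terms with $\epsilon_{i+1}=1$ combine with the remaining $\delta_i=1$ terms of $-\alpha d_i$; after applying the spindle identity $x_i*x_i = x_i$ (to rewrite $(x_i-x_{i-1})*x_i$ as $x_i - x_{i-1}*x_i$), these collected terms reassemble into $r_i$. The $\epsilon_i=1$ family (acting by $x_{i-1}$) has no counterpart in $\alpha d_i$ and assembles on its own into $r_{i-1}$; here idempotency $x_{i-1}*x_{i-1} = x_{i-1}$ is invoked whenever $\epsilon_{i-1}=0$ causes source positions $i-1$ and $i$ to coincide. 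At the boundary $i=n$ the sub-family indexed by $\epsilon_{i+1}$ is vacuous and the $r_i$ part disappears, so the identity reduces to $d_n\alpha - \alpha d_n = r_{n-1}$, consistent with setting $r_n=0$.

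For part (ii), summing the identities in (i) with alternating signs gives
\[
\partial_n^{(*)}\alpha - \alpha \partial_n^{(*)} = \sum_{i=0}^n (-1)^i (d_i\alpha - \alpha d_i) = \sum_{i=0}^n (-1)^i(r_{i-1}+r_i).
\]
Reindexing $\sum_{i=0}^n(-1)^i r_{i-1}$ by $j=i-1$ rewrites it as $-\sum_{j=-1}^{n-1}(-1)^j r_j$; adding to $\sum_{i=0}^n(-1)^i r_i$ causes all interior $r_j$ with $0\le j\le n-1$ to cancel pairwise, leaving only the boundary contributions $r_{-1} + (-1)^n r_n = 0$.

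The main obstacle will be the combinatorial bookkeeping in part (i): keeping track of which basis tensors in the $2^n$-term expansion of $d_i\alpha$ cancel against which basis tensors in $\alpha d_i$, and recognizing exactly where spindle idempotency must be applied to collapse doubled factors into the stated residues. Once the pattern is laid out (as the small case $n=3$, $i=2$ already illustrates), the general argument is a matter of disciplined indexing rather than any new idea.
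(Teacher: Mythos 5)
Your argument is correct and is essentially the paper's own proof: the paper carries out the same computation of $d_i\alpha-\alpha d_i$ in the compact multilinear notation (checking $i=0$ and $i=n$ directly, expanding the acting slot $*(x_i-x_{i-1})$ bilinearly to split off $r_{i-1}$, and using idempotency $x_i*x_i=x_i$ to identify the leftover slot-$i$ term $x_{i-1}*x_i-x_i$ with the corresponding slot of $r_i$), and part (ii) is proved by the identical telescoping cancellation $\sum_{i=0}^n(-1)^i(r_{i-1}+r_i)=r_{-1}+(-1)^nr_n=0$. Two small notes: idempotency is actually needed only in assembling $r_i$, not $r_{i-1}$ (the $\epsilon_i=1$ family is literally the multilinear expansion of $-((x_0,\ldots,x_{i-1}-x_{i-2})*x_{i-1},\ldots)$ with no simplification required), and the convention ``$r_{-1}=r_0=0$'' in the statement is a typo for $r_{-1}=r_n=0$, which you have implicitly and correctly repaired.
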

\begin{proof}
        We check immediately that $d_0 \alpha - \alpha d_0 = r_0$ and that $d_n \alpha - \alpha d_n = r_{n-1}.$
 Then, for $0 < i < n$ we compute:
$$(d_i\alpha - \alpha d_i)(x_0,x_1,...,x_n)= $$ 
$$d_i((x_0 , x_1 -x_0 ,...,x_n -x_{n-1} )- \alpha((x_0 , ..., x_{i-1})*x_i , x_{i+1} , ..., x_n ) =$$
$$((x_0 , x_1 -x_0 , ..., x_{i-1} -x_{i-2} )*(x_{i} -x_{i-1}), x_{i+1} -x_i , x_{i+2} -x_{i+1} , ..., x_n - x_{n-1})-$$
$$((x_0 , x_1 -x_0, ..., x_{i-1} -x_{i-2})*x_i , x_{i+1} - x_{i-1}*x_i , x_{i+2}-x_{i+1},...,x_{n}-x_{n−1}) = $$
$$-((x_0 , x_1 - x_0 , ..., x_{i-1} - x_{i-2} ) * x_{i-1} , x_{i+1} - x_i , x_{i+2} - x_{i+1} , ..., x_n - x_{n-1})+ $$
$$((x_0,x_1 -x_0,...,x_{i-1} -x_{i-2} )*x_i , x_{i+1} -x_i -x_{i+1} +x_{i-1} *x_i,x_{i+2}-x_{i+1},...,x_n -x_{n-1})=$$
$$r_{i-1} +((x_0 , x_1 -x_0 , ..., x_{i-1} -x_{i-2} )*x_i , (x_{i-1} -x_i )*x_i , x_{i+2} -x_{i+1},...,x_n -x_{n−1}) =$$
$r_{i-1} + r_i$, as needed.

(ii) follows from (i) as $\partial^{(*)}_n = \sum_{i=0}^n(-1)^id_i$ and $\sum_{i=0}^n (-1)^i(r_{i-1}+ r_i) = 0.$
\end{proof}

\subsection{Basic properties of multi-term distributive homology}\label{Subsection 8.3}

Let $(X;*_1,...,*_k)$ be a multi-shelf. We say that $A\subset X$ is a submulti-shelf if it is 
closed under all operations $*_i$. In particular, for an element $t\in X$, the set $\{t\}$ is a 
submulti-shelf iff it satisfies the idempotency condition for any operation ($t*_it=t$).
For a submulti-shelf $A$ we have the short exact sequence of chain complexes (recall that 
$\partial^{(a_1,...,a_k)}= \sum_{i=1}^ka_i\partial^{(*_i)}$ 
and to shorten notation we often write $\Sigma=\sum_{i=1}^ka_i$):
$$ 0 \to C_n(A) \to C_n(X) \to C_n(X,A) \to 0, \text{\ \  where } C_n(X,A)= C_n(X)/C_n(A).$$
\begin{proposition}\label{Proposition 8.5}
\begin{enumerate}
\item[(i)] Assume that for a submulti-shelf $A\subset X$ there is an operations-preserving retraction
$r: X \to A$. Then $r$ extends to a (chain complex) split of the above short exact sequence
$\tilde{r}:\Z X^{n+1} \to \Z A^{n+1}$. 
In particular, $H_n(X)=H_n(A) \oplus H_n(X,A)$.  
\item[(ii)] If $\{t\}\subset X$ is a one element submulti-shelf of $X$, then $X\to \{t\}$ is a 
multi-shelf retraction, thus, by (i)  
$C_n(X,\{t\})$ splits and $H_n(X)=H_n(\{t\}) \oplus H_n(X,\{t\})$. We think about  the  homology 
of $\{t\}$ as  a multi-shelf homology of a point, and call $H_n(X,\{t\})$ a {\it reduced homology}.
 \item[(iii)] Let $$\Sigma = \Sigma_{i=1}^k a_i\neq 0, \text{\ \ then }  H_n^{(a_1,...,a_n)}(\{t\})=
\begin{cases}
 \Z & n = 0 \\
 0 & n>0 \text{ even}\\
\Z_{\Sigma} &  \text{ $n$ is odd}
 \end{cases}
$$ 
and for $\Sigma = 0$, $H_n^{(a_1,...,a_n)}(\{t\})=\Z$ for any $n$.
\item[(iv)] Let $(X;*)$ be a shelf, $(x*t)*t=x*t$ for every $x\in X$, and $X*t$ 
be the orbit of the left action of $X$ on $t$ that is,  
$X*t=\{y\in X\ | \ y=x*t, \text{\ for some } x\in X\}$. Then $r_t=*_t: X \to X*t$ is a retraction; thus, by (i),
$H_n^{(*)}(X)=H_n^{(*)}(X*t) \oplus H_n^{(*)}(X,X*t)$.
\item[(v)]
Let $(X;*_1,...,*_k)$ be a multi-shelf.
Consider the map $h_t: C_n \to C_{n+1}$ given by $h_t(x_0,...,x_n)= (x_0,...,x_n,t)$, and the map 
$f_t=\sum_{i=1}^ka_i (*_i)_t$,
given by $f_t(x_0,...,x_n)= \sum_{i=1}^ka_i((x_0,...,x_n)*_it)$, 
then $(-1)^{n+1}h_t: C_n \to C_{n+1}$ is a chain homotopy between the map $f_t$
and the zero map.
\item[(vi)] Let  $(X;*_1,...,*_k)$ be a multi-shelf and $*_0$ the identity operation of $Bin(X)$.
Let $\partial^{(a_0,a_1,...,a_k)}=\sum_{i=0}^ka_i\partial^{(*_i)}$. Then $a_0Id_X$ is chain homotopic to 
$-f_t= -\sum_{i=1}^ka_i (*_it)$. 
\item[(vii)] Let $(X,*)$ be a shelf, and consider a rack boundary operation
 $\partial^R= \partial^{(*_0)}- \partial^{(*)}$,
then for any $t\in X$, we have $f_t=*_t$ is chain homotopic to $-Id_X$ and it is a (chain complex) retraction, 
thus $H^R_n(X,X*t)=0$ and $H^R_n(X)= H^R_n(X*t)$ for any $t\in X$ such that $(x*t)*t=x*t$ for every $x\in X$. 
A generalization of this observation
plays an important role in the computation of the 4-term homology of distributive lattices in \cite{Pr-Pu}.
\end{enumerate}
\end{proposition}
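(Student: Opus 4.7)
The proposition packages several distinct statements, so the proof proposal naturally splits into three groups. The first group --- (i), (ii), and (iv) --- all follow by constructing a diagonal chain-level extension of a shelf retraction. The plan is to define $\tilde r : C_n(X) \to C_n(A)$ on basis elements by $\tilde r(x_0,\ldots,x_n) = (r(x_0),\ldots,r(x_n))$; because $r$ preserves every operation $*_i$, it commutes with every face map $d_i^{(*_j)}$ and hence with $\partial^{(a_1,\ldots,a_k)}$, giving a chain map, and $r|_A = \mathrm{Id}_A$ forces $\tilde r$ to be a chain-level left inverse of the inclusion $C_n(A) \hookrightarrow C_n(X)$. The short exact sequence therefore splits as chain complexes, immediately giving $H_n(X) = H_n(A) \oplus H_n(X,A)$ and proving (i). Part (ii) is just (i) applied to the singleton $\{t\}$, which is a submulti-shelf precisely by the idempotency assumption. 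For (iv), I would directly verify that $r_t = *_t$ is a shelf homomorphism (this is right self-distributivity, $(a*b)*t=(a*t)*(b*t)$) and that $r_t$ fixes $X*t$ pointwise (the hypothesis $(x*t)*t = x*t$), then invoke (i).

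For (iii), I would compute directly on the one-dimensional chain group $C_n(\{t\}) = \Z$. Idempotency forces every face map to fix the generator $(t,\ldots,t)$, so
\begin{equation*}
\partial^{(a_1,\ldots,a_k)}_n (t,\ldots,t) \;=\; \Sigma \cdot \Bigl(\sum_{j=0}^{n}(-1)^j\Bigr)(t,\ldots,t),
\end{equation*}
which equals $\Sigma \cdot (t,\ldots,t)$ for $n$ even and vanishes for $n$ odd. Reading off the resulting two-periodic complex $\Z \xleftarrow{0} \Z \xleftarrow{\Sigma} \Z \xleftarrow{0} \Z \xleftarrow{\Sigma} \cdots$ gives the stated homology, with $\Sigma = 0$ collapsing all differentials to zero and producing $\Z$ in every degree.

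The key technical step is (v), from which (vi) and (vii) fall out. The plan is a direct unpacking of the defining formula for $\partial^{(*_i)}$: when $\partial^{(*_i)}_{n+1}$ is applied to $h_t(x_0,\ldots,x_n) = (x_0,\ldots,x_n,t)$, the first $n+1$ summands leave the terminal $t$ untouched and collectively reproduce $h_t\,\partial^{(*_i)}_n(x_0,\ldots,x_n)$, while the final summand, indexed by $n+1$, contributes $(-1)^{n+1}(x_0*_i t,\ldots,x_n*_i t) = (-1)^{n+1}(x_0,\ldots,x_n)*_i t$. Multiplying by $a_i$ and summing over $i$ yields
\begin{equation*}
\partial^{(a_1,\ldots,a_k)}_{n+1}\, h_t \;-\; h_t\, \partial^{(a_1,\ldots,a_k)}_n \;=\; (-1)^{n+1} f_t,
\end{equation*}
so absorbing the sign into $H_n := (-1)^{n+1} h_t$ gives (v). Statement (vi) follows by re-including the index $i=0$: since $(*_0)_t = \mathrm{Id}_X$, the additional term contributes $a_0\,\mathrm{Id}$, and the same chain homotopy now shows $a_0\,\mathrm{Id} + f_t$ is null-homotopic, i.e. $a_0\,\mathrm{Id} \simeq -f_t$. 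Part (vii) is the rack specialization $a_0 = 1$, $a_1 = -1$, in which (vi) provides a chain homotopy between $\mathrm{Id}_X$ and $*_t$ (up to the sign convention in the statement); combined with the retraction of (iv), this makes the inclusion $C_*(X*t) \hookrightarrow C_*(X)$ a chain homotopy equivalence, hence $H^R_n(X, X*t) = 0$ and $H^R_n(X) = H^R_n(X*t)$.

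The main obstacle is nothing conceptual --- it lies entirely in the sign bookkeeping for (v), since the grading shift from $h_t$ modifies the alternating pattern of the boundary and the cancellation of ``interior'' faces against $h_t\partial$ must be tracked carefully. Once (v) is in hand, every remaining part is either a direct application of it or formal manipulation with the splitting produced in (i).
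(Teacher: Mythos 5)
Your proposal is correct and follows essentially the same route as the paper: a diagonal chain-level extension of the retraction for (i), (ii), (iv); the two-periodic complex $\cdots \xrightarrow{\times\Sigma} \Z \xrightarrow{0} \Z \to 0$ for (iii); and the direct computation $\partial_{n+1}h_t - h_t\partial_n = (-1)^{n+1}f_t$ (the last face map being the only one that touches the appended $t$) for (v), with (vi) and (vii) as formal consequences. Your parenthetical about the sign convention in (vii) is well taken --- with $a_0=1$, $a_1=-1$ one gets $*_t$ homotopic to $+\mathrm{Id}$, which is what the retraction argument actually uses.
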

\begin{proof}
(i) If $i: A \to X$ is an embedding and $\tilde{i}: C_n(A) \to C_n(X)$ its linear extension to chain complexes, 
then $\tilde{r}\tilde{i}=Id_A$ and $\partial \tilde{r} = \tilde{r} \partial$, so $\tilde{r}$ is a 
 map that splits chain complex $C_n(X)$ and (i) of Proposition \ref{Proposition 8.5} follows.\\
(ii) It follows from (i) and idempotency $t*_it=t$.\\
(iii) $C_n(\{t\}) = \Z $ with basic element $(t,t,...,t)$. The chain complex reduces to:
$$ ... \stackrel{0}{\rightarrow} \Z  \stackrel{\times \Sigma}{\rightarrow} \Z 
\stackrel{0}{\rightarrow} \Z  \stackrel{\times \Sigma}{\rightarrow}
 \Z \stackrel{0}{\rightarrow} \Z  \stackrel{\times \Sigma}{\rightarrow} \Z \stackrel{0}{\rightarrow} \Z \to 0$$
and the homology follows immediately.\\
(iv) This follows from (i).\\
(v) We have $$\partial_{n+1}^{(a_1,...,a_k)}h_t - h_t\partial_{n}^{(a_1,...,a_k)} =
(-1)^{n+1}\sum_{i=1}^ka_i((x_0,...,x_0)*_it)$$
 and (v) follows.\\
(vi) This follows immediately from (v). \\
(vii) This is a consequence of (vi) but it should be stressed that it is a tautology for a rack (as then 
$X*t=X$ for any $t$). 
If $(X,*)$ is not a rack, that is, there is $t$ with $*_t$ not invertible, then we have a reduction in 
the computation of rack homology ($\partial^R= \partial^{(*_0)}- \partial^{(*)}$) \ from $X$ to $X*t$.
\end{proof}

We refer to \cite{Pr-Pu} for some useful generalizations of Proposition \ref{Proposition 8.5}.

We end this section by showing that 
the {\it reduced early degenerate complex} $(F_0,\{t\}) =s_0(C_{n-1})/C_n(\{t\})$ splits from  
the reduced chain complex $C(X,\{t\})$ of a multi-spindle $(X;*_1,...,*_n)$. The second factor 
$C(X,\{t\})/(F_0,\{t\})$ is called the {\it reduced early normalized chain complex} and denoted 
by $C^{eN}(X,\{t\})$. We also show how $F_0=\{F_n^0\}$ and $\{C_n\}$ are related.
\begin{proposition}\label{Proposition 8.6}
\begin{enumerate}
\item[(i)]
The short exact sequence of  multi-spindle chain complexes:
$$ 0 \to (F_n^0,\{t\}) \to C_n(X,\{t\}) \to C_n^{eN}(X,\{t\}) \to 0$$
splits with a split map $s_0p_0: C(X,\{t\}) \to (F_0,\{t\})$. where $p_0(x_0,x_1,...,x_n)=(x_1,...,x_n)$.
\item[(ii)] $s_0: C_{n-1}(X)\otimes \Z_{\sum_{i=1}^ka_k} \to s_0{C_{n-1}}\otimes \Z_{\sum_{i=1}^ka_k}$ 
yields an isomorphism on $mod (\sum_{i=1}^ka_k)$ homology.
\end{enumerate}
\end{proposition}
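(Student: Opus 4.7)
The approach is to first verify that $F_n^0 := s_0 C_{n-1}$ is a subcomplex of $C_n(X)$ in the multi-spindle setting, then exhibit $\psi := s_0 p_0$ as a chain-level retraction onto it, and finally read off (ii) as a by-product of the same computation. For the subcomplex property I would compute $d_j s_0$ using the simplicial-style identities: idempotency $a *_i a = a$ forces $d_0 s_0 = d_1 s_0 = \mathrm{Id}$ (so these two contributions cancel in the alternating sum), while for $j \ge 2$ an entry-by-entry check gives $d_j s_0 = s_0 d_{j-1}^{(*_i)}$. Summing with coefficients $a_i$ yields the key identity
$$\partial_n s_0 \;=\; -\,s_0\,\partial_{n-1} \;+\; \Sigma\cdot s_0 d_0,\qquad \Sigma:=\sum_{i=1}^k a_i,$$
so in particular $\partial_n(F_n^0) \subset F_{n-1}^0$. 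Since $(t,\ldots,t) = s_0(t,\ldots,t)$, we have $C_n(\{t\}) \subset F_n^0$, so the quotient $(F_n^0,\{t\}) = F_n^0/C_n(\{t\})$ is genuinely a subcomplex of $C_n(X,\{t\})$.

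For (i), first observe that $\psi$ is a set-theoretic retraction onto $F_n^0$: since $p_0 = d_0$ and $d_0 s_0 = \mathrm{Id}$, we get $\psi|_{F_n^0} = s_0(p_0 s_0) = s_0 = \mathrm{Id}$. I would then upgrade this to a chain-level statement by verifying $\psi d_j = d_j \psi$ term by term. Idempotency makes the $j=0,1$ contributions cancel on each side (both $\psi(d_0-d_1)$ and $(d_0-d_1)\psi$ vanish). For $j \ge 2$ both $\psi d_j(x_0,\ldots,x_n)$ and $d_j \psi(x_0,\ldots,x_n)$ equal the same tuple, because $\psi$ merely overwrites coordinate $0$ by coordinate $1$ and $d_j$ for $j \ge 2$ acts symmetrically on positions $0$ and $1$. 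Well-definedness modulo $C_n(\{t\})$ is automatic from $\psi(t,\ldots,t)=(t,\ldots,t)$, which splits the short exact sequence.

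For (ii), I would read the statement off the identity above: modulo $\Sigma$ the right-hand side vanishes, giving $\partial_n s_0 \equiv -s_0\,\partial_{n-1} \pmod{\Sigma}$, so after tensoring with $\Z_\Sigma$ the map $(-1)^n s_0$ in degree $n$ is an honest chain map from $C_{\bullet-1}\otimes\Z_\Sigma$ into $s_0 C_{\bullet-1}\otimes\Z_\Sigma$. As an underlying map of graded abelian groups, $s_0$ is injective (its left inverse $d_0$ satisfies $d_0 s_0 = \mathrm{Id}$) and by definition surjective onto its image, hence a graded bijection; this survives the tensor with $\Z_\Sigma$. A bijective chain map (up to a uniform sign twist per degree) induces an isomorphism on homology.

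The main obstacle is purely bookkeeping: the chain-map check for $\psi$ and the derivation of $\partial s_0 + s_0 \partial = \Sigma\, s_0 d_0$ are the only substantive computations, and both rely only on idempotency together with the face--degeneracy relations that already hold in any multi-shelf. No invertibility, distributivity between distinct $*_i$, or weak-distributivity input is needed beyond what is already encoded in each $\partial^{(*_i)}$.
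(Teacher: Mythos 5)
Your proposal is correct and takes essentially the same route as the paper, which proves Proposition \ref{Proposition 8.6} via Lemma \ref{Lemma 8.7}: the same homotopy identity $\partial s_0+s_0\partial=\Sigma\, s_0p_0$ derived from $d_0s_0=d_1s_0$ (idempotency) and $d_is_0=s_0d_{i-1}$ for $i>1$, the same term-by-term verification that $s_0p_0$ is a chain retraction onto $F^0$, and the same observation that $(-1)^ns_0$ becomes a bijective chain map onto $s_0C_{n-1}$ after reducing mod $\Sigma$. The only (harmless) notational wrinkle is that in your key identity the symbol $d_0$ must mean the single-operation face map $p_0$ rather than the weighted sum $\sum_i a_i d_0^{(*_i)}=\Sigma p_0$, exactly as you use it elsewhere.
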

\begin{proof} Proposition \ref{Proposition 8.6} follows from  Lemma \ref{Lemma 8.7} (see also \cite{Pr-Pu} for 
further developments of these ideas).
\end{proof}
\begin{lemma}\label{Lemma 8.7}
\begin{enumerate}
\item[(i)] The map $s_0: C_n \to C_{n+1} $ is a chain homotopy between
$(\sum_{i=1}^k a_i) s_0p_0$ and the zero map. In particular $(\sum_{i=1}^k a_i)$ annihilates $H_n(F^0(X))$.
Furthermore, $s_0p_0$ is a chain that splits the chain complex of Proposition \ref{Proposition 8.6}(i).
\item[(ii)]
The map $p_0: C_n \to C_{n-1} $ is a chain homotopy between $(\sum_{i=1}^k a_i)p_0p_0$ and the zero map.
 Furthermore, $p_0p_0$ is a chain map.
\item[(iii)] If $(\sum_{i=1}^k a_i)=0$, then $(-1)^ns_0$ and $(-1)^np_0$ are chain maps (we write
$\sigma$ for $(-1)^ns_0$).
Furthermore, $p_0s_0= Id_{C_n}$ and  $s_0p_0=Id_{F^0}$. In particular, $\sigma: C_n \to F^0_{n+1}$ 
is an isomorphism of chain complexes.
\item[(iv)] More generally, $\sigma \otimes Id$  is a chain complex isomorphism
 $C_n(X)\otimes \Z_{\Sigma} \to F^0_{n+1}\otimes \Z_{\Sigma}$.
In particular, $H_n(X,\Z_{\Sigma})$ is isomorphic to $H_{n+1}(F_0,\Z_{\Sigma})$.
\end{enumerate}
\end{lemma}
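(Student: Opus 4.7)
The plan is to reduce all four parts of the lemma to two master chain-homotopy identities
\[
\partial s_0 + s_0 \partial = \Sigma\, s_0 p_0, \qquad \partial p_0 + p_0 \partial = \Sigma\, p_0 p_0,
\]
where $\Sigma = \sum_{\alpha=1}^k a_\alpha$ and $\partial$ denotes the multi-term boundary $\partial^{(a_1,\ldots,a_k)}$. Once these are established, parts (i)--(iv) all follow by direct interpretation without further computation.

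To prove the first identity I would unfold how each face map $d_i^{(*_\alpha)}$ composes with $s_0(x_0,\ldots,x_n) = (x_0,x_0,x_1,\ldots,x_n)$. The outcome is $d_0 s_0 = \mathrm{Id}$ trivially, $d_1 s_0 = \mathrm{Id}$ by the multi-spindle idempotency $x_0 *_\alpha x_0 = x_0$ (the only place this hypothesis is used), and $d_i s_0 = s_0 d_{i-1}$ for $i \ge 2$ by reindexing. Dually, $s_0 d_0 = s_0 p_0$ since $p_0 = d_0$, and $s_0 d_i = d_{i+1} s_0$ for $i \ge 1$. When the alternating sums are assembled with the weights $a_\alpha$, the $i=0$ and $i=1$ contributions to $\partial s_0$ cancel each other by idempotency, and the remaining terms of $\partial s_0$ telescope against those of $s_0 \partial$ after the index shift, leaving only $\Sigma\, s_0 p_0$. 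The analogous computation for $p_0$ uses $p_0 d_0 = p_0 d_1 = d_0 p_0$ (which needs no idempotency, since projection kills the inserted duplicate directly) and $p_0 d_i = d_{i-1} p_0$ for $i \ge 2$, and the same cancellation pattern yields $\partial p_0 + p_0 \partial = \Sigma\, p_0 p_0$.

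Given the two identities, part (i) is immediate: on $F^0$ the map $s_0 p_0$ acts as $\mathrm{Id}$ because $p_0 s_0 = \mathrm{Id}$, so $\Sigma$ annihilates $H_n(F^0)$. The split claim in Proposition~\ref{Proposition 8.6}(i) follows by verifying that $s_0 p_0$ is a chain map on $C(X,\{t\})$, which drops out of combining the two identities: $\partial\, s_0 p_0 = -s_0 \partial p_0 + \Sigma\, s_0 p_0 p_0 = s_0 p_0\, \partial$. Part (ii) is already contained in the second master identity. For (iii), specializing $\Sigma = 0$ gives $\partial s_0 = -s_0 \partial$ and $\partial p_0 = -p_0 \partial$, which are exactly the graded-commutation conditions that make $(-1)^n s_0$ and $(-1)^n p_0$ chain maps of degrees $+1$ and $-1$; together with the pointwise equalities $p_0 s_0 = \mathrm{Id}_{C_n}$ and $s_0 p_0 = \mathrm{Id}_{F^0}$ this makes $\sigma$ a chain isomorphism with explicit signed inverse. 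Part (iv) follows by reducing the master identities modulo $\Sigma$: over $\Z_\Sigma$ the coefficient $\Sigma$ vanishes, reinstating the hypothesis of (iii) on the tensored complex, so $\sigma \otimes \mathrm{Id}$ becomes a chain isomorphism producing the claimed shift in mod-$\Sigma$ homology.

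The main obstacle is purely bookkeeping: confirming that after the reindexing $i \mapsto i-1$ in $d_i s_0 = s_0 d_{i-1}$, the alternating sums telescope exactly, leaving a single uncanceled term with precisely the coefficient $\Sigma$ (rather than $-\Sigma$ or $\Sigma - 1$), and isolating idempotency as used \emph{only} to collapse the $d_0 s_0 - d_1 s_0$ pair. Once the two master identities are identified as doing all the work, the four parts of the lemma become formal consequences.
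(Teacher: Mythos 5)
Your proposal is correct and follows essentially the same route as the paper: both rest on the two homotopy identities $\partial s_0 + s_0\partial = \Sigma\, s_0 p_0$ and $\partial p_0 + p_0\partial = \Sigma\, p_0 p_0$, derived from $d_0 s_0 = d_1 s_0 = \Sigma\,\mathrm{Id}$ (idempotency entering only at $d_1 s_0$), $p_0 d_0 = p_0 d_1$, and the weak-simplicial commutation relations, after which (i)--(iv) are formal consequences. The only (harmless) difference is that you deduce that $s_0 p_0$ is a chain map algebraically from the two identities, whereas the paper also verifies this by a direct computation of $d_i s_0 p_0 - s_0 p_0 d_i$.
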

\begin{proof}(i)  We use the fact that $d_0s_0=d_1s_0 = (\sum_{i=1}^k a_i)Id_{C_n}$ and that
$(C_n,d_i,s_i)$ is a weak simplicial module and, in particular, $d_is_0=s_0d_{i-1}$ for $i>1$.
Thus we have:
$$\partial^{(a_1,...,a_k)}s_0 + s_0\partial^{(a_1,...,a_k)}= \sum_{i=0}^{n+1}(-1)^{i}d_is_0 +
\sum_{i=0}^n(-1)^{i}s_0d_i= $$
$$(d_0s_0- d_1s0) + \sum_{i=2}^{n+1}(-1)^{i}d_is_0 + \sum_{i=0}^n(-1)^{i}s_0d_i= $$
$$ \sum_{i=2}^{n+1}(-1)^{i}s_0d_{i-1} + \sum_{i=0}^n(-1)^{i}s_0d_i= $$
 $$ \sum_{i=1}^{n+1}(-1)^{i+1}s_0d_{i} + \sum_{i=0}^n(-1)^{i}s_0d_i= s_0d_0 =
(\sum_{i=1}^k a_i) s_0p_0.$$
$(\sum_{i=1}^k a_i) s_0p_0$ is a chain map, and because $C_n$ is a complex of free groups, the map
$s_0p_0$ is a chain map.\\
We also can check directly that $s_0p_0: C_n \to C_n$ is a chain map that is a (chain) retraction 
to $F^0_n=s_0C_{n-1}$. First, $s_0p_0$ is the identity on $F^0_n$; further we have:
$$(d^{(*)}_0s_0p_0 -s_0p_0d^{(*)}_0)(x_0,x_1,x_2,...,x_n)= (x_0,x_1,x_2,...,x_n)- (x_1,x_1,x_2...,x_n),$$
$$(d^{(*)}_1s_0p_0 -s_0p_0d^{(*)}_1)(x_0,x_1,x_2,...,x_n)= (x_0,x_1,x_2,...,x_n)- (x_1,x_1,x_2...,x_n),$$
$$(d^{(*)}_is_0p_0 -s_0p_0d^{(*)}_i)(x_0,x_1,x_2,...,x_n)=0 \text{ for $i>1$  }.$$
Thus $\partial^{(*)}s_0p_0 = s_0p_0\partial^{(*)}$ and finally
$\partial^{(a_1,...,a_k)}s_0p_0 = s_0p_0\partial^{(a_1,...,a_k)}$.

(ii) We notice that $d_0p_0= (\sum_{i=1}^k a_i)p_0p_0$ and $d_ip_0=p_0d_{i+1}$. Thus:
$\partial^{(a_1,...,a_k)}p_0 + p_0\partial^{(a_1,...,a_k)}= (\sum_{i=1}^k a_i)p_0p_0$ and (ii) follows.\\
(iii) For $\sum_{i=1}^k a_i =0$ we directly see that $(-1)^ns_0$ and $(-1)^np_0$ are chain maps.
(iv) We see immediately that $s_0p_0=Id_{F_0}\mod \Sigma$ and $p_0s_0=Id_{C_n} \text{ mod }  \Sigma$.
\end{proof}

\section{Examples}\label{Examples 9}

 In this section  we illustrate our theory by various 
calculations of homology of multi-spindles. With the exception of racks (e.g. \cite{N-P-2,Nos,Cla}) 
no calculations were done before. 
We offer calculations of varying difficulties,
 starting from two-term homology. In Subsection 9.4 we make a detailed calculation using the following idea:
in the homology of a point, the chain groups $C_n(\{t\})$ are one-dimensional which makes the computation easy (see
Proposition \ref{Proposition 8.5} (iv)). For $|X|>1$ the chain groups grow exponentially, but there is one
case when the computation is not difficult, but still illuminating: the case of $|X|=2$ and
normalized homology, in which $C_n(X)$ is two-dimensional. For example, it works nicely for the group
homology of $\Z_2$ and for the Hochschild homology of $\Z(\Z_2)=\Z[x]/(x^2-1)$, or $\Z[x]/(x^2)$ (the underlying ring of
Khovanov homology). Here we show the calculation for a $4$-term distributive homology of a 4-spindle (in fact,
the maximal multi-spindle for $|X|=2$; see Subsection 9.3 and the 2-element Boolean algebra $B_1$). 

\subsection{The case of 2-term homology with  $\partial^{(a,d)}=a\partial^{(*_0)}+ d\partial^{(*_{\sim})}$}\

Define $*_{\sim}: X\times X \to X$ as the
 left trivial operation, that is $a*_{\sim}b=b$ (we will explain our notation 
in the section on Boolean algebras).

Below we consider the homology of the chain complex $(C(X);\partial^{(a,d)})$
where $\partial^{(a,d)}=a\partial^{(*_0)}+ d\partial^{(*_{\sim})}$. This generalizes Theorem \ref{Theorem 6.6} 
for $g=Id$ and is further generalized in \cite{Pr-Pu}.
\begin{theorem}\label{Theorem 9.1}\ 
\begin{enumerate}
\item[(1)] The chain complex $(C_n(X),\partial^{(a,d)})$ splits into three pieces:\\ 
(i) $C_n(\{t\})$, the chain complex of a point (we fix a point  $t\in X$), \\
(ii) $F_0(X,\{t\})=\{F^0_n(X,\{t\})\}=\{F^0_n/C_n(\{t\})\}=\{s_0C_{n-1}/C_n(\{t\})\}$, 
the reduced early degenerate chain complex, and\\
(iii)  $C_n^{eN}(X,\{t\})= C_n(X,\{t\})/F^0_n$, the reduced early normalized chain complex.\\
\item[(2)] If $a+d\neq 0$, then $H_n(\{t\})=
\begin{cases}
 \Z & \text{if $n = 0$} \\
0 & \text{if $n$ is even and } n>0 \\
 \Z_{a+d} & \text{if $n$ is odd} 
 \end{cases}
$\\
If $a+d=0$ then $H_n(\{t\})=\Z$.
\item[(3)] For a finite $X$, and $a$ or $d$ different from $0$ we have:\\  $H_n(F_0(X,\{t\}))=
\begin{cases}
\Z_{gcd(a,d)}^{u_n-1} & \text{ if $n$ is even} \\
\Z_{gcd(a,d)}^{u_n} & \text{ if $n$ is odd}
\end{cases} $\\
where $u_n(|X|)=u_n$ is 
defined by: 
$u_0=1$, $u_1=|X|-1$, and $u_n+u_{n-1} = |X|^n$, that is $u_n= |X|^n-u_{n-1}= |X|^n-|X|^{n-1}+...+(-1)^n=
\frac{|X|^{n+1}+(-1)^n}{|X|+1}$.
\item[(4)] For a finite $X$, and $a\neq 0$, we have  $$H_n(X,\{t\})/F_0)=
\Z_a^{u_{n+1}-u_n+(-1)^n}.$$
\item[(5)] If $a\neq 0$, and $a+d\neq 0$ then\\
$H_n(X)=
\begin{cases}
 \Z \oplus \Z_a^{|X|-1}& \text{if $n = 0$} \\
\Z_a^{u_{n+1}-u_n+1}\oplus \Z_{gcd(a,d)}^{u_{n}-1} & \text{if $n$ is even and } n>0 \\
 \Z_{a+d} \oplus \Z_a^{u_{n+1}-u_n -1}\oplus \Z_{gcd(a,d)}^{u_{n}} & \text{if $n$ is odd}
 \end{cases}
$ \\
The case of $a=0$ was already considered in Corollary \ref{Corollary 6.7}. The case of $a+d=0$ 
differs only from the general case in the factor $H_n(\{t\})$ so can be easily derived from (2)-(4).

\end{enumerate}
\end{theorem}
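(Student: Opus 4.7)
The plan is to decompose the chain complex into three summands and compute each. First, $(X; *_0, *_\sim)$ is a multi-spindle: both operations are idempotent, and the four required distributivity relations collapse to tautologies, e.g.\ $(a *_\sim b)*_0 c = b = (a*_0 c) *_\sim (b *_0 c)$ and $(a *_0 b)*_\sim c = c = (a *_\sim c)*_0 (b*_\sim c)$. For any $t \in X$, the singleton $\{t\}$ is a sub-multi-shelf (by idempotency) and the constant map $X \to \{t\}$ is an operations-preserving retraction. Proposition~\ref{Proposition 8.5}(ii) then splits $C_n(X) = C_n(\{t\}) \oplus C_n(X, \{t\})$, and Proposition~\ref{Proposition 8.6} further splits $C_n(X, \{t\}) = F_0(X, \{t\}) \oplus C^{eN}_n(X, \{t\})$ via $s_0 p_0$, establishing~(1). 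Part~(2) is immediate from Proposition~\ref{Proposition 8.5}(iii) with $\Sigma = a+d$.

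\textbf{The reduced normalized piece (part (4)).} The key simplification on $C^{eN}$ is that for $i \geq 1$ the face $d_i^{(*_\sim)}(x_0, \ldots, x_n) = (x_i, x_i, \ldots, x_i, x_{i+1}, \ldots, x_n)$ has equal first two coordinates, hence lies in $F_0$ and vanishes in the quotient. Combined with $d_0^{(a,d)} = (a+d) d_0$, a brief rearrangement gives $\bar\partial^{(a,d)} = a\,\bar\partial^{(*_0)}$ on $C^{eN}$. Now $(C(X,\{t\}), \partial^{(*_0)})$ is the reduced unnormalized simplicial chain complex of the codiscrete simplicial set on $X$ relative to $\{t\}$, hence acyclic; applying Proposition~\ref{Proposition 8.6} to the single operation $*_0$ exhibits $(C^{eN}(X,\{t\}), \bar\partial^{(*_0)})$ as an acyclic direct summand. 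Therefore
$$H_n(C^{eN}(X,\{t\})) = \mathrm{im}(\bar\partial^{(*_0)}_{n+1}) \otimes \Z_a = \Z_a^{r_n},$$
and the recursion $r_n + r_{n-1} = \mathrm{rank}\, C^{eN}_n(X,\{t\}) = (|X|-1)|X|^n$ with initial value $r_0 = |X|-1$ gives $r_n = u_{n+1} - u_n + (-1)^n$ by induction using $u_n + u_{n-1} = |X|^n$.

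\textbf{The reduced degenerate piece (part (3)), assembly (part (5)), and main obstacle.} On $F_0$ one has $d_0^{(a,d)} = d_1^{(a,d)} = (a+d) d_0$, so the first two terms of $\partial^{(a,d)}|_{F_0}$ cancel; under the isomorphism $s_0: C_{n-1}(X,\{t\}) \xrightarrow{\sim} F_0^n(X,\{t\})$ the boundary transports to the truncated distributive differential $-\sum_{i \geq 1} (-1)^i d_i^{(a,d)}$ on $C_{n-1}(X,\{t\})$. Lemma~\ref{Lemma 8.7}(i),(iv) tells us only that $(a+d)$ annihilates $H_*(F_0)$ and that $H_n(F_0; \Z_{a+d}) \cong H_{n-1}(X; \Z_{a+d})$, while the theorem asserts the strictly finer torsion $\Z_{\gcd(a,d)}$. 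This is the main obstacle: one must separate the $a$- and $d$-contributions by expanding $d_i^{(a,d)} = a\,d_i^{(*_0)} + d\,d_i^{(*_\sim)}$, performing a Bezout-type change of basis on $F_0^n(X,\{t\})$ (whose rank is $|X|^n - 1$) to bring the boundary matrix into Smith normal form with elementary divisors equal to multiples of $\gcd(a,d)$, and reading off the stated torsion. The rank bookkeeping, combined with the recursion $u_n + u_{n-1} = |X|^n$, matches the claimed exponents $u_n-1$ in even degrees and $u_n$ in odd degrees. Part~(5) is then the direct sum of (2), (3), and (4), with the $\Z_{a+d}$ summand coming from~(2), the $\Z_a$ summand from~(4), and the $\Z_{\gcd(a,d)}$ summand from~(3).
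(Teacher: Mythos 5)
Your parts (1), (2), (4), and (5) follow essentially the same route as the paper: split off the point and the early degenerate part via Propositions \ref{Proposition 8.5} and \ref{Proposition 8.6}, read $H_n(\{t\})$ from Proposition \ref{Proposition 8.5}(iii), and observe that on the reduced early normalized quotient the boundary collapses to $a\,\bar\partial^{(*_0)}$, whose underlying complex is acyclic, so the homology is $\mathrm{im}\,\bar\partial^{(*_0)}_{n+1}\otimes\Z_a$ with rank determined by the recursion $r_n+r_{n-1}=(|X|-1)|X|^n$. One slip in your derivation of (4): $d_1^{(*_{\sim})}(x_0,\dots,x_n)=(x_1,x_2,\dots,x_n)$ does \emph{not} have equal first two coordinates and does not lie in $F_0$; the correct observation is that $d_1^{(*_{\sim})}=d_0$, so the two $d$-weighted terms $d\,d_0-d\,d_1^{(*_{\sim})}$ cancel, while $d_i^{(*_{\sim})}$ for $i\ge 2$ does land in $F_0$. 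Your conclusion $\bar\partial^{(a,d)}=a\,\bar\partial^{(*_0)}$ survives, and the rank bookkeeping checks out.

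The genuine gap is part (3), which you yourself flag as the main obstacle and then do not close. Saying that a ``Bezout-type change of basis'' brings the boundary matrix on $F_0(X,\{t\})$ into Smith normal form with elementary divisors equal to multiples of $\gcd(a,d)$ is a restatement of the claim, not an argument: nothing you write explains why the elementary divisors are exactly $\gcd(a,d)$ rather than proper divisors or larger multiples, why there are $u_n-1$ of them in even degree and $u_n$ in odd degree, or why $H_*(F_0(X,\{t\}))$ has no free part. The missing input is an annihilation argument finer than Lemma \ref{Lemma 8.7}(i): besides $(a+d)$ killing $H_*(F^0)$, apply Proposition \ref{Proposition 8.5}(v) with the homotopy $h_t$ --- here $f_t=a\,\mathrm{Id}+d\cdot\mathrm{const}_t$, and the constant map vanishes on the reduced complex $C(X,\{t\})$ --- to conclude that $a$ also annihilates $H_*(C(X,\{t\}))$ and hence its direct summand $H_*(F_0(X,\{t\}))$; together these give annihilation by $\gcd(a,a+d)=\gcd(a,d)$. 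Even then one must still identify the torsion exactly and count; the paper does this by transporting the computation across the mod-$(a+d)$ isomorphism $H_{n+1}(F_0;\Z_{a+d})\cong H_n(X;\Z_{a+d})$ of Lemma \ref{Lemma 8.7}(iii)--(iv) and combining with (2) and (4), with full details deferred to \cite{Pr-Pu}. As written, your (3) --- and therefore the $\Z_{\gcd(a,d)}$ summands in (5) --- is asserted rather than proved.
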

\begin{proof}
(1) This follows from Propositions \ref{Proposition 8.5} and \ref{Proposition 8.6}.\\
(2) This is a special case of Proposition \ref{Proposition 8.5}(iii).\\
(3) This follows from (4) and Lemma \ref{Lemma 8.7}(iii).\\
(4) First we notice that $\partial^{(a,d)}= a\partial^{(*_0)}$ in our chain group. The result follows from 
the fact that for $a=1$ we get an acyclic chain complex and from a careful analysis of the rank of 
$\partial_n(C^{eN})$.\\
(5) This is the summary of (2)-(4). 

\end{proof}
\subsection{Example: 3-term distributive homology of a spindle with $1_r$ and $0_r$}\

For any spindle $(X,*)$ we have the 3-element distributive set $\{*_0,*,*_{\sim}\}$; we check directly:
$$(x*y)*_{\sim}z=z \text{ and } (x*_{\sim}z)*(y*_{\sim}z)= z*z=z,$$
and
$$(x*_{\sim}y)*z= y*z \text{ and } (x*z)*_{\sim}(y*z)= y*z.$$
Thus we can consider 3-term distributive homology of a multi-spindle $(X;*_0,*,*_{\sim})$ with the 
boundary operation $\partial^{(a,c,d)}= a\partial^{(*_0)} + c\partial^{(*)} + d \partial^{(*_{\sim})}$.
Computation of this homology, in general, is a difficult problem as it contains quandle homology as 
a special case. However, for $*$ with a right unit $1_r$ (i.e. $x*1_r=x$), and a right projector $0_r$ 
(i.e. $x*0_r=0_r$), the solution can be obtained in a manner similar to that 
of Theorem \ref{Theorem 9.1}. Namely, we have:
\begin{theorem}\label{Theorem 9.2}\
\begin{enumerate}
\item[(1)] The chain complex $(C_n(X),\partial^{(a,c,d)})$ splits into three pieces:\\
(i) $C_n(\{1_r\})$, the chain complex of a point (we fix a point  $1_r$), \\
(ii) $F_0(X,\{1_r\})=\{F^0_n/C_n(\{1_r\})\}=\{s_0C_{n-1}/C_n(\{1_r\})\}$, 
the reduced chain complex of  early degenerate elements, and\\
(iii)  $C_n^{eN}(X,\{t\}) =C_n(X,\{1_r\})/F^0_n$, the reduced early normalized chain complex.\\
\item[(2)] If $a+c+d\neq 0$, then $H_n(\{t\})=
\begin{cases}
 \Z & \text{if $n = 0$} \\
0 & \text{if $n$ is even and } n>0 \\
 \Z_{a+c+d} & \text{if $n$ is odd}
 \end{cases}
$\\
If $a+c+d=0$ then $H_n(\{1_r\})=\Z$.
\item[(3)] For a finite $X$, and $a$, $c$, or $d$  different from $0$, we have:\\  $H_n(s_0C_{n-1}/C_n(\{1_r\})=
\begin{cases}
\Z_{gcd(a,c,d)}^{u_n-1} & \text{ if $n$ is even} \\
\Z_{gcd(a,c,d)}^{u_n} & \text{ if $n$ is odd}
\end{cases} $\\
where $u_n=u_n(|X|)= |X|^n-|X|^{n-1}+...+(-1)^n$, as in Theorem \ref{Theorem 9.1}.
\item[(4)] For a finite $X$, and $a$ or $c$ different from $0$, we have  $$H_n((X,\{1_r\})/F_0)=
\Z_{gcd(a,c)}^{u_{n+1}-u_n+(-1)^n}.$$
\item[(5)] If $a$ or $c$ $\neq 0$, and $a+c+ d\neq 0$ then\\
$H_n(X)=
\begin{cases}
 \Z \oplus \Z_{gcd(a,c)}^{|X|-1}& \text{if $n = 0$} \\
\Z_{gcd(a,c)}^{u_{n+1}-u_n+1}\oplus \Z_{gcd(a,c,d)}^{u_{n}-1} & \text{if $n$ is even and } n>0 \\
 \Z_{a+c+d} \oplus \Z_{gcd(a,c)}^{u_{n+1}-u_n -1}\oplus \Z_{gcd(a,c,d)}^{u_{n}} & \text{if $n$ is odd}
 \end{cases}
$ \\
The case $a=c=0$ was already described in Corollary \ref{Corollary 6.7}.
The case $a+c+d=0$  differs from other cases only at $H_n(\{1_r\})$. 
\end{enumerate}
\end{theorem}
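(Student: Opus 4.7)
The proof follows the blueprint of Theorem~\ref{Theorem 9.1} point by point. For (1), the constant map $r:X\to\{1_r\}$ is a multi-spindle homomorphism (using $1_r*1_r=1_r$ from spindle idempotency, together with the trivial identities $1_r*_0 1_r = 1_r = 1_r*_\sim 1_r$), so Proposition~\ref{Proposition 8.5}(i) splits off $C_n(\{1_r\})$, and Proposition~\ref{Proposition 8.6}(i) then splits $F_0(X,\{1_r\})$ from $C_n^{eN}(X,\{1_r\})$. Part (2) is immediate from Proposition~\ref{Proposition 8.5}(iii) with $\Sigma = a+c+d$.

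The key step is (4). As in the proof of Theorem~\ref{Theorem 9.1}, the operation $*_\sim$ vanishes on $C^{eN}$: for $i\geq 2$ the chain $d_i^{(*_\sim)}(x_0,\ldots,x_n)=(x_i,x_i,\ldots,x_i,x_{i+1},\ldots,x_n)$ has its first two entries equal and hence lies in $F_0$, while $d_0^{(*_\sim)}$ and $d_1^{(*_\sim)}$ both coincide with leading-entry deletion and cancel with opposite signs. Hence $\partial^{(a,c,d)} = a\partial^{(*_0)} + c\partial^{(*)}$ on $C^{eN}$. I now invoke Proposition~\ref{Proposition 8.5}(v) twice. With $t=1_r$, the right-unit property $x*1_r = x$, combined with $(x_0,\ldots,x_n)*_\sim 1_r = (1_r,\ldots,1_r)\equiv 0$ in $C(X,\{1_r\})$, shows $f_{1_r} = (a+c)\,Id$ is null-homotopic, so $(a+c)$ annihilates $H_n(C^{eN})$. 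With $t=0_r$, the right-projector identities $x*0_r = 0_r$ and $x*_\sim 0_r = 0_r$ give $f_{0_r}(x_0,\ldots,x_n) = a(x_0,\ldots,x_n) + (c+d)(0_r,\ldots,0_r)$; because $(0_r,\ldots,0_r)\in F_0$ the second term vanishes in $C^{eN}$, so $f_{0_r} = a\,Id$ is null-homotopic there and $a$ too annihilates $H_n(C^{eN})$. Consequently $gcd(a,a+c)=gcd(a,c)$ annihilates. Pinning down the torsion as $\Z_{gcd(a,c)}^{u_{n+1}-u_n+(-1)^n}$ then follows the rank analysis of Theorem~\ref{Theorem 9.1}(4): for $gcd(a,c)=1$ (say $a=1$) the homotopy $h_{0_r}$ alone shows that $(C^{eN},\partial^{(a,c,d)})$ is acyclic, and combined with $rk\, C_n^{eN}(X,\{1_r\}) = |X|^{n+1}-|X|^n$ this determines $rk\,\partial_n(C^{eN})$ and hence the claimed exponent.

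For (3), apply Lemma~\ref{Lemma 8.7}(iv) to the reduced complex $C_n(X,\{1_r\})$: the map $\sigma\otimes Id$ is an isomorphism of chain complexes $C_n(X,\{1_r\})\otimes\Z_\Sigma \to F^0_{n+1}(X,\{1_r\})\otimes\Z_\Sigma$, so $H_{n+1}(F_0(X,\{1_r\}),\Z_\Sigma) \simeq H_n(C(X,\{1_r\}),\Z_\Sigma)$; feeding (4) into this isomorphism together with a universal-coefficient calculation produces (3). Here the torsion order jumps from $gcd(a,c)$ to $gcd(a,c,d) = gcd(gcd(a,c),\Sigma)$ precisely because $F_0$ is sensitive to $\partial^{(*_\sim)}$, whose contribution had disappeared on $C^{eN}$. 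Part (5) is the direct sum assembly of (2)--(4), with the $H_0(\{1_r\})=\Z$ summand providing the exceptional $n=0$ term.

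The main obstacle is upgrading the annihilation $gcd(a,c)\cdot H_n(C^{eN}) = 0$ to the precise structure $\Z_{gcd(a,c)}^{r_n}$: chain-homotopy arguments only yield annihilators, so one must separately verify acyclicity in the coprime case (where the role of $0_r$ is essential, since $h_{0_r}$ provides a null-homotopy of the identity without any $1_r$-contribution) and then carry out the two-step recursion $u_n + u_{n-1} = |X|^n$ to identify $rk\,\partial_n$ exactly. The analogous bookkeeping for (3) --- tracking how the $\sigma$-shift in Lemma~\ref{Lemma 8.7}(iv) interacts with reducing modulo $\Sigma$ and with the three-way splitting of Proposition~\ref{Proposition 8.6} --- is the other place where real care is required.
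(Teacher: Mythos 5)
Your proposal is correct and follows essentially the same route as the paper: the paper's own proof likewise consists of the two chain homotopies $h_{1_r}$ and $h_{0_r}$ giving annihilation of the reduced homology by $gcd(a,c)$, followed by the splitting and rank bookkeeping of Theorem \ref{Theorem 9.1}, with the remaining details deferred to \cite{Pr-Pu}. In fact your computations $f_{1_r}=(a+c)\,Id + d\,(1_r,\ldots,1_r)$ and $f_{0_r}=a\,Id+(c+d)\,(0_r,\ldots,0_r)$ are the correct ones (the paper's two displayed formulas have these swapped), and your only slip is the parenthetical ``say $a=1$'': for general coprime $a,c$ the acyclicity of $C^{eN}$ requires a B\'ezout combination of \emph{both} null-homotopies, not $h_{0_r}$ alone.
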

\begin{proof} The proof is a refinement of the proof of Theorem \ref{Theorem 9.1}. We first consider 
the chain homotopy $h_{1_r}$ to get:
$$(-1)^{n+1}(\partial^{(a,c,d)}h_{1_r} - h_{1_r}\partial^{(a,c,d)})(x_0,...,x_n)= a(x_0,...,x_n) + 
(c+d)(1_r,...,.1_r),$$ 
and the chain homotopy $h_{0_r}$ to  get:
$$(-1)^{n+1}(\partial^{(a,c,d)}h_{0_r} - h_{0_r}\partial^{(a,c,d)})(x_0,...,x_n)= (a+c)(x_0,...,x_n) + 
d(0_r,...,.0_r),$$
From this we conclude that $H_n(C,\{1_r\})$ is annihilated by $gcd(a,c)$. Further we proceed like in the proof 
of Theorem \ref{Theorem 9.1}; see \cite{Pr-Pu} for details.
\end{proof} 
\subsection{Example: 4-term normalized distributive homology of the 2-element Boolean algebra}\label{Subsection 9.3} 

Our first interesting example of a distributive monoid is given by a distributive lattice (e.g. Boolean algebra)
$({\mathcal L},\cup,\cap)$ because lattice operations $\cup$ and $\cap$ form a distributive set. 
(We refer to \cite{B-D,Gra,Si,Tra} for an extensive coverage of distributive lattices\footnote{In our language 
a distributive lattice is a multi-spindle $({\mathcal L},\cup,\cap)$ with commutative and associative operations,
satisfying absorption axioms: $(a\cup b)\cap b = b = (a\cap b)\cup b$.} and Boolean algebras).
In this paper, we 
denote these binary operations by $*_{\cup}$ and $*_{\cap}$. The distributive monoid spanned by these operations
is a commutative monoid of 4 idempotent elements: $*_0$ - identity element, $*_{\cup}$, $*_{\cap}$, 
and the composition $*_{\sim}= *_{\cap} *_{\cup}$. 
One can present the monoid as: 
$$\{*_{\cup},*_{\cap}\ | \ *_{\cup}*_{\cap}=*_{\cap}*_{\cup}, *_{\cup}*_{\cup}=*_{\cup}, 
*_{\cap}*_{\cap}=*_{\cap}\}.$$
Notice that $*_{\sim}$ is the left trivial operation, $a*_{\sim}b=b$.

Using our 4-element distributive monoid we can consider the 4-term boundary operation:
$\partial^{(a,b,c,d)}: C_n({\mathcal L}) \to C_{n-1}({\mathcal L})$, where $C_n({\mathcal L})= \Z{\mathcal L}^{n+1}$ 
and $\partial=\partial^{(a,b,c,d)} =
a\partial^{(*_0)}+ b\partial^{(*_{\cup})} + c\partial^{(*_{\cap})} + d\partial^{(*_{\sim})}$.

The computation of the four-term distributive homology of ${\mathcal L}$ is 
generally difficult, but it is done fully in \cite{Pr-Pu}; see Theorem \ref{Theorem 9.5}. 
For normalized homology 
we are able to make a very elementary (and illuminating, in my opinion) calculation in 
the simplest nontrivial case of $B_1=\{0,1\}$, the two element Boolean algebra 
of subsets of the one element set. This case is approachable because $C^{Norm}_n(B_1)$ is 2-dimensional for any $n$.
Choose the basis $e_n= (0,1,0,1,0,...)$, $e'_n=(1,0,1,0,1,...)$ of $C^{Norm}_n(B_1)=C_n(B_1)/C_n^D(B_1)$.
To be able to deduce homology, it is enough to write $\partial$ in this basis.
We have to consider the case of $n$ even and odd separately.\\

\subsection{Detailed calculation of the quandle homology of $\bf{X=B_1}$}

$$\partial^{(*_0)}(e_n) = (-1)^{n}e_{n-1}+ e'_{n-1}= (-1)^n\partial^{(*_0)}(e_n').$$

For $n$ even we have $e_n= (0,1,...,1,0)$ and  $e_n'= (1,0,...,0,1)$; then
$$\partial^{(*_{\cup})}(0,1,...,1,0)= (-1)^{n}e_{n-1}$$
$$\partial^{(*_{\cup})}(1,0,...,0,1)= e_{n-1}$$

For $n$ odd we have $e_n= (0,1,...,0,1)$ and $e'_n= (1,0,...,1,0)$; then
$$\partial^{(*_{\cap})}(0,1,...,1,0,1)= (-1)^{n} e_{n-1} +e_{n-1}'$$
$$\partial^{(*_{\cap})}(1,0,...,0,1,0)= 0.$$
$$\partial^{(*_{\sim})}(e_n)= \partial^{(*_{\sim})}(e'_n)=0.$$
For $\partial^{(a,b,c,d)}= a\partial^{*_0} + b\partial^{*_{\cup}} + c\partial^{*_{\cap}} +d\partial^{*_{\sim}},$ 
and for $n$ even
$$ \partial^{(a,b,c,d)}(e_n)= (-1)^n(a+b)e_{n-1} + (a+c)e'_{n-1} =(-1)^n\partial^{(a,b,c,d)}(e'_n).$$
For $n$ odd:
$$\partial^{(a,b,c,d)}(e_n)= (-1)^n(a+c)e_{n-1} +(a+c)e'_{n-1} ,$$
$$\partial^{(a,b,c,d)}(e'_n)= (a+b)(e_{n-1}+ (-1)^ne'_{n-1}) .$$
Therefore, we have the following matrices of relations in $C^{Norm}_n/\partial(C^{Norm}_{n+1})$. For $n$ even:
\[
\left(
\begin{array}{cc}
(-1)^n(a+b) & a+c  \\
a+b & (-1)^n(a+c)  \end{array}
\right)\]

For $n$ odd:
\[
\begin{pmatrix}
(-1)^n(a+c) & a+c  \\
a+b &  (-1)^n(a+b)
\end{pmatrix} \]

From this we get:
\begin{proposition}\label{Proposition 9.3}\ 
\begin{enumerate}
\item[(i)] $C^{Norm}_n/\partial(C^{Norm}_{n+1}) = \Z \oplus \Z_{gcd(a+b,a+c)}$.
\item[(ii)] For $n>0$, $\partial (C^{Norm}_n) = \Z$, unless $a+b=a+c=0$ in which case
$\partial (C^{Norm}_n) =0$.
\item[(ii)] For $n>0$, $H^{Norm}_n(B_1)= \Z_{gcd(a+b,a+c)}$, unless $a+b=a+c=0$ in which case
$H^{Norm}_n(B_1)= \Z\oplus \Z$. \\ 
$H^{Norm}_0(B_1)= C^{Norm}_0/\partial(C^{Norm}_{1}) = \Z \oplus \Z_{gcd(a+b,a+c)}$.
\end{enumerate}
\end{proposition}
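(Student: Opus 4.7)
The plan is to reduce the entire problem to $2\times 2$ linear algebra over $\Z$ using the boundary formulas just derived. For $n$ even those formulas give $\partial(e_n) = \partial(e_n') = (a+b)e_{n-1} + (a+c)e_{n-1}'$, so the image $\partial(C^{Norm}_n)$ is the cyclic subgroup generated by a single vector. For $n$ odd both $\partial(e_n) = (a+c)(e_{n-1}' - e_{n-1})$ and $\partial(e_n') = -(a+b)(e_{n-1}' - e_{n-1})$ lie along the line $\Z(e_{n-1}' - e_{n-1})$, so $\partial(C^{Norm}_n) = \gcd(a+b, a+c)\cdot \Z(e_{n-1}' - e_{n-1})$. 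This observation makes (ii) immediate: for any $n > 0$ the image $\partial(C^{Norm}_n)$ is a copy of $\Z$ unless $a+b = a+c = 0$, in which case it vanishes.

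For (i), the quotient $C^{Norm}_n/\partial(C^{Norm}_{n+1})$ is, in either parity, of the form $\Z^2 / \Z\cdot d v$ where $v$ is a primitive vector in $\Z^2$ and $d = \gcd(a+b, a+c)$. For $n$ even the relevant $v$ is $e_n - e_n'$ (up to sign), which is obviously primitive. For $n$ odd, writing $a+b = d\alpha$ and $a+c = d\beta$ with $\gcd(\alpha,\beta) = 1$, the vector $v$ is $\alpha e_n + \beta e_n'$, primitive by coprimality of $\alpha,\beta$. A change of basis of $\Z^2$ sending $v$ to a coordinate vector then identifies the quotient with $\Z \oplus \Z_d = \Z \oplus \Z_{\gcd(a+b,a+c)}$ (reading $\Z_0 = \Z$ in the degenerate case $a+b = a+c = 0$, which matches the direct observation that $\partial_{n+1} = 0$ there).

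For (iii) I would compute $Z_n = \ker\partial_n$ and then the quotient $Z_n/B_n$ with $B_n = \partial(C^{Norm}_{n+1})$. Assume first that $d = \gcd(a+b, a+c) \neq 0$. For $n > 0$ even, the map $xe_n + ye_n' \mapsto (x+y)\bigl((a+b)e_{n-1} + (a+c)e_{n-1}'\bigr)$ has kernel $Z_n = \Z(e_n - e_n')$, while $B_n = d\,\Z(e_n' - e_n)$, so $Z_n/B_n \cong \Z_d$. For $n$ odd, the kernel equation is $\beta x = \alpha y$, which by $\gcd(\alpha,\beta) = 1$ forces $Z_n = \Z(\alpha e_n + \beta e_n')$, while $B_n = d\,\Z(\alpha e_n + \beta e_n')$, so once more $Z_n/B_n \cong \Z_d$. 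When $d = 0$ every $\partial_n = 0$, hence $H^{Norm}_n(B_1) = C^{Norm}_n \cong \Z\oplus\Z$ for $n > 0$, as claimed. The $n = 0$ case is exactly (i) specialized to $n = 0$. The main (rather mild) obstacle is keeping the two parities straight and separating off the degenerate stratum $a+b = a+c = 0$; everything else is elementary Smith-normal-form linear algebra on a $2\times 2$ matrix.
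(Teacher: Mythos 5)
Your proposal is correct and follows essentially the same route as the paper: both reduce everything to the explicit $2\times 2$ boundary matrices on the two-dimensional normalized chain groups $C^{Norm}_n(B_1)$ and carry out elementary integral linear algebra, keeping the two parities separate and treating the degenerate stratum $a+b=a+c=0$ apart. The only cosmetic difference is that the paper extracts $H^{Norm}_n$ from the cokernel presentation using the rank identity $\mathrm{rank}(H_n)+\mathrm{rank}(\mathrm{Im}\,\partial_{n+1})+\mathrm{rank}(\mathrm{Im}\,\partial_{n})=\mathrm{rank}(C_n)$ together with $\mathrm{tor}\,H_n=\mathrm{tor}\bigl(C_n/\mathrm{Im}\,\partial_{n+1}\bigr)$, whereas you compute $\ker\partial_n$ explicitly; both yield the same answer.
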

In the proof we use the standard but important observations that \\
(i) $rank(H_n) + rank (\mathrm{Im} \partial_{n+1})+ rank (\mathrm{Im} \partial_{n}) = X^n$, and \\
(ii) $tor H_n(X) = tor (\Z X^{n+1}/\mathrm{Im} (\partial_{n+1})).$ \\


The degenerate part of the homology is much more difficult. We started with computer experiments 
(with the help of Michal Jablonowski and Krzysztof Putyra) and eventually proved the following: 
\begin{theorem}\cite{Pr-Pu}\label{Theorem 9.4}
 Assume that $a+b+c+d \neq 0$ and $a+b\neq 0$ or $a+c\neq 0$ then $rank H_n^D(B_1)=0$ and
$$ H_n^D(B_1)=
 \left\{ \begin{array}{rl}
  \Z_{gcd(a+b,a+c)}^{a_n-1} \oplus \Z_{gcd(a+b,a+c,c+d)}^{a_n-1} &\mbox{ if $n$ is even} \\
 \Z_{a+b+c+d} \oplus \Z_{gcd(a+b,a+c)}^{a_n-1} \oplus \Z_{gcd(a+b,a+c,c+d)}^{a_n} & \mbox{ if $n$ is odd}
       \end{array} \right.
$$
In the formula above $a_n=u_n(2) $ (see Theorem \ref{Theorem 9.1}), that is, 
$a_0=a_1=1$, $a_n+a_{n-1}= 2^n$, and thus
$a_n= 2a_{n-1}+(-1)^n=2^n-2^{n-1}+...+ (-1)^n = \frac{2^{n+1}+(-1)^n}{3}$.
\end{theorem}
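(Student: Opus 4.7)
The plan is to reduce the problem to computing the full homology $H_n(B_1)$ and then subtract the normalized part. Specifically, Theorem \ref{Theorem 8.3} gives the canonical splitting $H_n(B_1)=H_n^D(B_1)\oplus H_n^{Norm}(B_1)$, and Proposition \ref{Proposition 9.3} already determines $H_n^{Norm}(B_1)=\Z_{\gcd(a+b,a+c)}$ for $n>0$. So it suffices to pin down $H_n(B_1)$. For this, I will apply Proposition \ref{Proposition 8.5}(ii): each singleton $\{0\}$ and $\{1\}$ is a sub-multi-spindle (every operation in the monoid $\{*_0,*_\cup,*_\cap,*_\sim\}$ is idempotent on $B_1$), so $H_n(B_1)=H_n(\{t\})\oplus H_n(B_1,\{t\})$; then Proposition \ref{Proposition 8.6}(i) gives a further split $H_n(B_1,\{t\})=H_n(F_0(B_1,\{t\}))\oplus H_n(C^{eN}(B_1,\{t\}))$. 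Matching these three summands with the three pieces asserted in Theorem \ref{Theorem 9.4} (plus the $\Z_{\gcd(a+b,a+c)}$ coming from $H^{Norm}$) will be the whole game.

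The point-factor is immediate from Proposition \ref{Proposition 8.5}(iii): with $\Sigma=a+b+c+d$ we recover the $\Z_{a+b+c+d}$ summand in odd degrees. To extract the annihilators of the other two summands I will apply the chain homotopy $h_t(x_0,\dots,x_n)=(x_0,\dots,x_n,t)$ from Proposition \ref{Proposition 8.5}(v-vi) for both $t=0$ and $t=1$. Substituting into $(x_0,\dots,x_n)*_i t$ for each $*_i$, I read off that for $t=1$ (using $x*_\cap 1=x$ and $x*_\cup 1=1$) the homotopy relation collapses to multiplication by $(a+c)$ modulo $(1,\dots,1)$, while for $t=0$ it collapses to multiplication by $(a+b)$ modulo $(0,\dots,0)$. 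Hence $\gcd(a+b,a+c)$ annihilates $H_n(B_1,\{t\})$, consistent with Proposition \ref{Proposition 9.3}, and in particular annihilates the reduced early normalized summand $H_n(C^{eN}(B_1,\{t\}))$.

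The crux is the early degenerate summand, which must carry the extra $\Z_{\gcd(a+b,a+c,c+d)}$ factor. My approach here is to use Lemma \ref{Lemma 8.7}(iv): the map $\sigma\otimes\mathrm{id}:C_n(B_1)\otimes\Z_\Sigma\to F^0_{n+1}(B_1)\otimes\Z_\Sigma$ is an isomorphism of chain complexes, so $F_0$-homology is essentially a degree-shifted copy of the whole homology with coefficients mod $\Sigma$. Re-running the $h_0$ and $h_1$ computation inside $F_0(B_1,\{t\})$ -- where the residual terms $(c+d)(0,\dots,0)$ and $(b+d)(1,\dots,1)$ that were previously killed in the passage to $(B_1,\{t\})$ now become genuinely nontrivial and interact with each other via the other basepoint -- produces a third relation with coefficient $c+d$. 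Combined with $a+b$ and $a+c$, this gives the desired annihilator $\gcd(a+b,a+c,c+d)$. The multiplicity $a_n-1$ (resp.\ $a_n$) of this torsion factor and the analogous multiplicity for the $\gcd(a+b,a+c)$ factor are then forced by the rank-count equations $\operatorname{rank}(H_n)+\operatorname{rank}(\mathrm{Im}\,\partial_{n+1})+\operatorname{rank}(\mathrm{Im}\,\partial_n)=2^{n+1}$ and the recursion $a_n+a_{n-1}=2^n$, exactly as in the rank-counting step at the end of Proposition \ref{Proposition 9.3}.

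The main obstacle I anticipate is the bookkeeping of the two homotopies $h_0$ and $h_1$ inside $F_0(B_1,\{t\})$: one must verify that they yield \emph{independent} torsion relations producing the gcd of three distinct quantities, rather than a single larger modulus or a hidden free summand. Because $F_0$ is genuinely a subcomplex (not a retract of $C$) and because the two basepoints are exchanged by the $\cup/\cap$ duality, this is where a clean argument is most delicate; I expect one needs either a short filtration $F^0\supset s_0 C_{n-1}(\{t\})$ refining the constructions of Sections \ref{Subsection 8.2}--\ref{Subsection 8.3}, or an explicit diagonalization in the two-dimensional slice of $F^0$ inherited from the normalized basis $e_n,e'_n$ via $\sigma$. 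Once the independence is established the annihilator count closes by the same linear-algebra argument as in Proposition \ref{Proposition 9.3}.
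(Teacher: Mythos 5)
First, note that this paper does not actually prove Theorem \ref{Theorem 9.4}: it is quoted from \cite{Pr-Pu}, and the surrounding text explicitly flags the degenerate part as ``much more difficult'' and as having first been found by computer experiment. So your proposal can only be judged on its own merits. Its skeleton is reasonable and uses the right toolkit: recovering $H_n^D$ from the total $H_n(B_1)$ by cancelling the known $H_n^{Norm}$ of Proposition \ref{Proposition 9.3} is legitimate for finitely generated abelian groups, the three-fold splitting via Propositions \ref{Proposition 8.5} and \ref{Proposition 8.6} is the same one the paper uses for Theorems \ref{Theorem 9.1} and \ref{Theorem 9.2}, and your annihilator bounds are essentially correct. (Indeed, the $c+d$ relation you are chasing comes out cleanly from Lemma \ref{Lemma 8.7}(i): $\Sigma=a+b+c+d$ annihilates $H_n(F^0)$, and $\gcd(a+b,\,a+c,\,a+b+c+d)=\gcd(a+b,a+c,c+d)$; you do not need the murkier ``interaction via the other basepoint'' argument, nor Lemma \ref{Lemma 8.7}(iv), whose mod-$\Sigma$ conclusion cannot see $\gcd(a+b,a+c)$-torsion since that gcd need not divide $\Sigma$.)

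The genuine gap is the last step. Annihilators give only \emph{upper} bounds on torsion (each Smith normal form entry of the relevant boundary matrix divides $\gcd(a+b,a+c)$, resp.\ $\gcd(a+b,a+c,c+d)$), and the rank-count identity $\operatorname{rank}(H_n)+\operatorname{rank}(\mathrm{Im}\,\partial_{n+1})+\operatorname{rank}(\mathrm{Im}\,\partial_n)=\operatorname{rank}(C_n)$ determines only the free rank, i.e.\ the \emph{number} of nonzero SNF entries --- not their values. Nothing in your argument rules out that many of those entries equal $1$ or a proper divisor of the gcd, so the multiplicities $a_n$ and $a_n-1$ are not ``forced.'' In Proposition \ref{Proposition 9.3} this issue does not arise because the $2\times 2$ boundary matrices are written explicitly, and in Theorem \ref{Theorem 9.1}(4) it is dodged because on $C^{eN}$ the boundary degenerates to $a\,\partial^{(*_0)}$, a scalar multiple of an acyclic complex's boundary, so every nonzero SNF entry is exactly $a$. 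In the four-term case $\partial^{(*_\sim)}$ still dies on $C^{eN}$, but $\partial^{(*_\cup)}$ and $\partial^{(*_\cap)}$ do not reduce to multiples of $\partial^{(*_0)}$, so no such shortcut is available; you need a matching \emph{lower} bound --- explicit homology classes of exact order $\gcd(a+b,a+c)$ (resp.\ $\gcd(a+b,a+c,c+d)$) in the right quantity, or a prime-by-prime computation of the mod-$p$ ranks of the boundary matrices, or a finer filtration of $C^D$ whose subquotients are scalar multiples of acyclic complexes. Supplying that mechanism is precisely the hard content of \cite{Pr-Pu}, and it is absent from the proposal.
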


\subsection{More about homology for
 $\bf{ \partial^{(a,b,c,d)}= a\partial^{*_0} + b\partial^{*_{\cup}} + c\partial^{*_{\cap}} +d\partial^{*_{\sim}}}$}
\ \\
Two months after a June seminar talk I gave at Warsaw Technical University, we found a general formula 
for the four-term distributive homology of any finite distributive lattice. 
For $b=c=0$ it gives Theorem \ref{Theorem 9.1}. To formulate Theorem \ref{Theorem 9.5} 
we need some basic terminology:
let ${\mathcal L}$ be a distributive lattice; we say that
  an element a of ${\mathcal L}$ is join-irreducible if for any
decomposition $a = b \cup c$, we have $a = b$ or $a = c$. Let $J({\mathcal L})$ be the set of non-minimal (different from
$\emptyset$),
join-irreducible elements in ${\mathcal L}$ and $J$ its cardinality. In what follows $L$ denotes the cardinality of
${\mathcal L}$.
If ${\mathcal L}$ is finite, then $J$ is equal to the length of every maximal chain
in ${\mathcal L}$ (see Corollary 14 in \cite{Gra}). 
\begin{theorem}\cite{Pr-Pu}\label{Theorem 9.5}.  Let ${\mathcal L}$ be a finite distributive lattice.
Assume for simplicity that $a+b+c+d\neq 0$, $a+b$ or $a+c$ is not equal to $0$, and one of $a$, 
$b$ and $c$ is not equal to $0$. Then $ H^{(a,b,c,d)}_n({\mathcal L})=$
$$ \Z \oplus \Z_{gcd(a+b,a+c)}^{J}\oplus \Z_{gcd(a,b,c)}^{L-J-1}  \text{ if $n=0$ }, $$
$$  \Z_{gcd(a+b,a+c)}^{Ju_n(2)} \oplus \Z_{gcd(a,b,c)}^{u_{n+1}(L)-u_n(L)+1-Ju_n(2)} 
\oplus \Z_{gcd(a+b,a+c,c+d)}^{Ju_n(2)-J}
 \oplus \Z_{gcd(a,b,c,d)}^{u_n(L)-1-Ju_n(2)+J}$$
 if $n$ is even, and 
$$ \Z_{a+b+c+d} \oplus  
\Z_{gcd(a+b,a+c)}^{Ju_n(2)} \oplus \Z_{gcd(a,b,c)}^{u_{n+1}(L)-u_n(L)-1-Ju_n(2)} \oplus \Z_{gcd(a+b,a+c,c+d)}^{Ju_n(2)}
\oplus \Z_{gcd(a,b,c,d)}^{u_n(L)-Ju_n(2)} $$
 if $n$ is odd.
\end{theorem}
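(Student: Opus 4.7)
The plan is to successively split the chain complex $(C_*(\mathcal{L}), \partial^{(a,b,c,d)})$ using the structural results of Section~8, then reduce the remaining computation to pieces controlled by the join-irreducibles of $\mathcal{L}$ via Birkhoff's representation.

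First I would peel off a one-element sub-multi-spindle. Since every element $t\in\mathcal{L}$ satisfies $t\cup t=t\cap t=t$ and $t*_0 t=t*_\sim t=t$, Proposition~\ref{Proposition 8.5}(i)--(ii) gives a splitting $H_n(\mathcal{L})=H_n(\{t\})\oplus H_n(\mathcal{L},\{t\})$, accounting for the $\Z$ in degree~$0$ and the $\Z_{a+b+c+d}$ in odd degrees by Proposition~\ref{Proposition 8.5}(iii). Next, by Proposition~\ref{Proposition 8.6} (applied to the multi-spindle $(\mathcal{L};*_0,*_\cup,*_\cap,*_\sim)$), the reduced complex splits further as $F_0(\mathcal{L},\{t\})\oplus C^{eN}(\mathcal{L},\{t\})$. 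Lemma~\ref{Lemma 8.7}(iv) identifies $F_0$ with a shift of the full reduced complex modulo $\Sigma=a+b+c+d$, which will eventually produce the summands governed by $\gcd(a+b,a+c,c+d)$ and $\gcd(a,b,c,d)$ once the other part is known.

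The heart of the argument is the analysis of $C^{eN}(\mathcal{L},\{t\})$, and this is where $J$ enters. My plan is to show that this complex splits as a direct sum of two subcomplexes: a ``join-irreducible'' piece that, on each of the $J$ join-irreducibles, behaves like the normalized $B_1$-complex of Subsection~9.3, and a complementary ``decomposable'' piece supported on tuples in which some entry is a proper join $p\cup q$ of incomparable elements. On the first piece one reads off, exactly as in Proposition~\ref{Proposition 9.3} and Theorem~\ref{Theorem 9.4}, the factors $\Z_{\gcd(a+b,a+c)}^{Ju_n(2)}$ (and their contribution to the degenerate part through Step~2). On the decomposable piece, both $\cup$ and $\cap$ act nontrivially and the distributivity relations $(x\cup p)\cap q = (x\cap q)\cup(p\cap q)$ force the boundary to mix only $*_0$, $*_\cup$, $*_\cap$ in a way that makes the homology annihilated by $\gcd(a,b,c)$ (and, after composing with Step~2, by $\gcd(a,b,c,d)$); this should proceed by an explicit chain homotopy built from the idempotent retraction $x\mapsto x\cup t$ (or $x\cap t$) for suitable choices of $t$, generalizing the homotopies $h_{1_r}$ and $h_{0_r}$ in the proof of Theorem~\ref{Theorem 9.2}.

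Finally I would assemble the formula. The exponents of the torsion summands are determined by: (a) the ranks of $C^{eN}_n$ and $F_0^n$ as in Theorems~\ref{Theorem 9.1} and \ref{Theorem 9.2}, which introduce $u_n(L)$ and $u_{n+1}(L)-u_n(L)$; (b) the $J$-many copies of the $B_1$-pattern, which introduce $J\cdot u_n(2)$ in the places where the $\gcd(a+b,a+c)$-torsion lives; and (c) the shift induced by Lemma~\ref{Lemma 8.7} that sends even degrees to odd degrees with an adjustment by $(-1)^n$. An Euler-characteristic check of each individual gcd-factor (the differences $u_{n+1}(L)-u_n(L)\pm 1 - Ju_n(2)$ and $u_n(L)-1-Ju_n(2)+J$ have to match the rank of the relevant free subquotient of $C_n$) will then pin down all exponents. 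The main obstacle is Step~3: producing an honest chain-level splitting of $C^{eN}$ into the join-irreducible and decomposable subcomplexes, since the decomposition $a=\bigcup_{p\in J(\mathcal{L}),\,p\leq a} p$ does not come from a multi-spindle homomorphism and the splitting must be verified by a direct chain-homotopy construction exploiting lattice distributivity.
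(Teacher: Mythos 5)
First, a point of comparison: the paper does not actually prove Theorem \ref{Theorem 9.5} --- it is quoted from \cite{Pr-Pu} with no argument beyond the template established in Theorems \ref{Theorem 9.1} and \ref{Theorem 9.2}. So your proposal can only be measured against that template, and against it your Steps 1 and 2 are sound: peeling off $C_n(\{t\})$ via Proposition \ref{Proposition 8.5}(i)--(iii) correctly accounts for the $\Z$ and the $\Z_{a+b+c+d}$, and the further splitting into $F_0(\mathcal{L},\{t\})$ and $C^{eN}(\mathcal{L},\{t\})$ via Proposition \ref{Proposition 8.6} and Lemma \ref{Lemma 8.7} is exactly how the paper organizes the two-term and three-term cases. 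Your observation that $\partial^{(*_\sim)}$ vanishes modulo degenerate chains (so that only $a,b,c$ survive on the normalized quotient, while $d$ re-enters on the degenerate part through the mod-$\Sigma$ shift) is also the right mechanism for the appearance of $\gcd(a,b,c)$ versus $\gcd(a,b,c,d)$ and of $c+d$ in the third factor.

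The genuine gap is your Step 3, and you have correctly diagnosed it yourself: the entire content of the theorem --- the appearance of $J$ and of the exponents $Ju_n(2)$, $Ju_n(2)-J$, $u_{n+1}(L)-u_n(L)\pm 1-Ju_n(2)$, $u_n(L)-1-Ju_n(2)+J$ --- lives in the asserted chain-level decomposition of $C^{eN}$ into a ``join-irreducible'' piece isomorphic to $J$ copies of the normalized $B_1$-complex and a complementary piece annihilated by $\gcd(a,b,c)$, and you do not construct this decomposition. The obstruction you name is real: Birkhoff's representation $a=\bigcup_{p\le a,\,p\in J(\mathcal{L})}p$ is not a multi-spindle homomorphism, the span of tuples of join-irreducibles is not a subcomplex (the face maps $d_i^{(*_\cup)}$ produce proper joins), and no candidate projection or chain homotopy is exhibited. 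Moreover, your closing claim that an Euler-characteristic check ``will then pin down all exponents'' cannot work: there are four independent torsion exponents in each degree and the rank identity $\mathrm{rk}\,H_n+\mathrm{rk}\,\partial_{n+1}+\mathrm{rk}\,\partial_n=L^{n+1}$ together with $\mathrm{tor}\,H_n=\mathrm{tor}(\Z X^{n+1}/\mathrm{Im}\,\partial_{n+1})$ supplies only the total rank of $\mathrm{Im}\,\partial_{n+1}$, not its elementary divisors; separating the $\gcd(a+b,a+c)$-torsion from the $\gcd(a,b,c)$-torsion requires an actual diagonalization (or filtration with computed subquotients) of the boundary map, which is precisely what is missing. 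As it stands the proposal is a plausible roadmap whose exponents are read off from the answer rather than derived.
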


\subsection{Generalized lattices}

Our computation in \cite{Pr-Pu} of the four-term homology of a distributive lattice can be partially
generalized and this justifies an introduction of the following multi-spindle, 
in which commutativity or associativity of operations are not assumed.
\begin{definition}
A {\it generalized lattice} $(X;*_1,*_2)$ is a set with two binary operations which satisfy the
following three conditions:\\
(1) Each operation is right self-distributive.\\
(2) Absorption conditions hold: $(a*_1b)*_2b=b=(a*_2b)*_1b$
 (in particular each action satisfies the idempotency condition).\\
(3) $(a*_1b)*_1b= a*_1b$  and $(a*_2b)*_2b= a*_2b$ \\
If additionally our operations are right distributive with respect to each other:\\
(4) $(a*_1b)*_2c= (a*_2c)*_1(b*_2c)$ and  $(a*_2b)*_1c= (a*_1c)*_2(b*_1c)$,
we call $(X;*_1,*_2)$ a {\it generalized distributive lattice}.
\end{definition}
We should comment here that absorption implies that $*_1*_2=*_2*_1=*_{\sim}$ and idempotency
of each operation $a*_1a=a=a*_2a$ (we have: $((a *_1 a) *_2 a) *_1 a = a*_1 a$  (absorption for $b=a$, i.e.
 $(a *_1 a) *_2 a = a $). We also have $((a *_1 a) *_2 a) *_1 a  =  a$ (absorption for $b=a *_1 a)$;
thus $a *_1 a = a$.
The monoid in $Bin(X)$ generated by $(*_1,*_2)$ is isomorphic to the four element monoid from classical
(distributive) lattices (Subsection 9.3).

\section{Motivation from Knot Theory}

The fundamental result in combinatorial knot theory, envisioned by Maxwell and proved by 
Reidemeister and Alexander and Briggs around 1927, is that links in $R^3$ are equivalent (isotopic) 
if and only if their diagrams are related by a finite number of local moves (now called Reidemeister moves). 
Three Reidemeister moves are illustrated in Figures 10.2- 10.4; 
see \cite{Prz-1,Prz-2} for an early history of knot theory.
Thus, one can think about classical knot theory as analyzing knot diagrams modulo Reidemeister moves. 
One can, naively but successfully, construct knot invariants as follows: choose a set $X$ with a binary 
operation $*: X \times X \to X$, and consider ``colorings" of arcs of an oriented diagram $D$ (arcs 
are from undercrossing to undercrossing) by elements of $X$ so that,  
at every crossing, the coloring satisfies the condition from Fig. 10.1. This gives a different condition for a 
positive and negative crossing, which can be put together as in Fig. 10.1 (iii) (here only the 
overcrossing has to be oriented and, of course, we need an orientation of the plane of the projection). 
 We interpret the use of the operation $*$ as saying that an overcrossing is acting on an 
undercrossing. We define a diagram invariant $col_X(D)$ as a cardinality of a set 
of all allowed colorings of $D$, that is, 
$col_X(D)= |\{f:arcs(D) \to X\ | \ f \text{ satisfies the rules of Fig. 10.1} \}|$.
\\ \ \\
\centerline{\psfig{figure=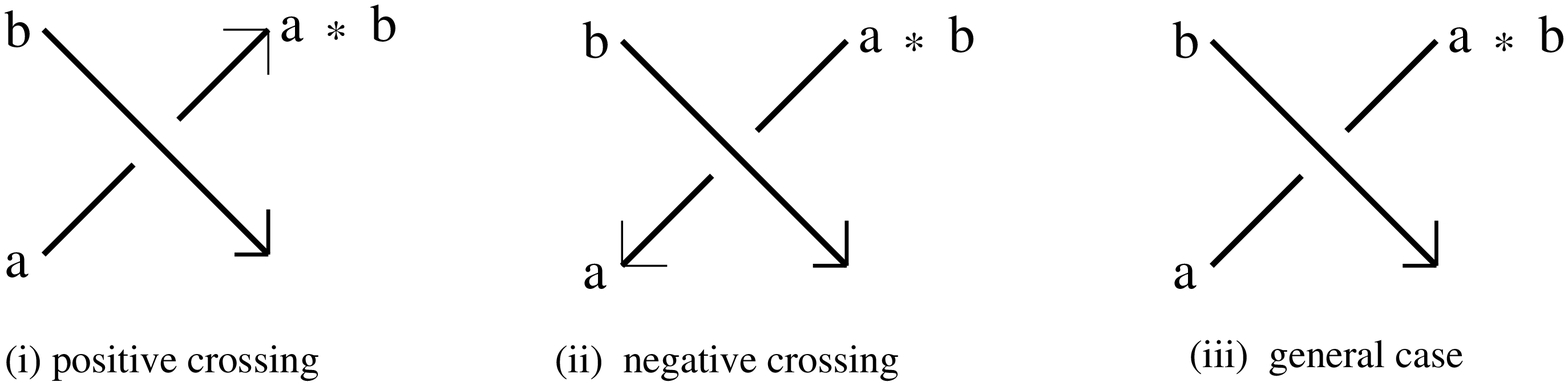,height=3.3cm}}
\centerline{Figure 10.1; local coloring by $(X,*)$}

In order to be a link invariant, $col_X(D)$ should be invariant under the Reidemeister moves, which provides 
motivation for the  axioms of a quandle.
\begin{enumerate}
\item[$(R_1)$] The first Reidemeister move of Figure 10.2 requires the idempotency $a*a=a$ (left part), and 
in the case of the right part we need a unique solution $x=a$ for the equation $x*a=a$, this follows 
from the idempotency and invertibility of $*$. 
\end{enumerate}
\centerline{\psfig{figure=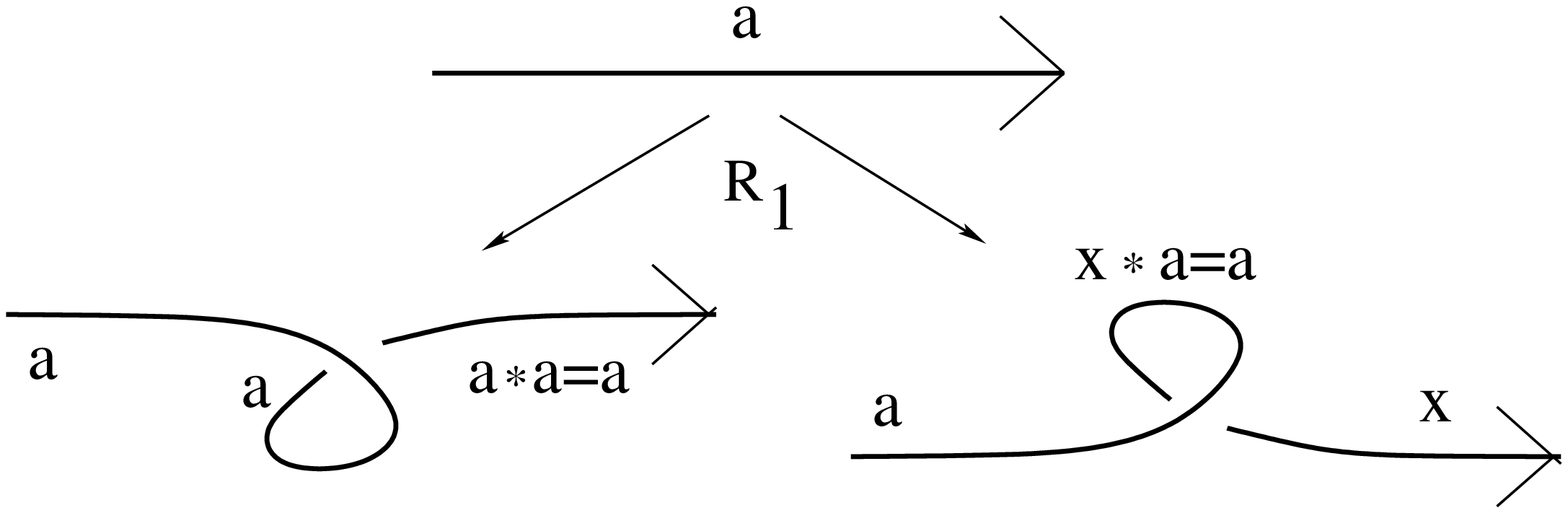,height=3.8cm}}\ \\ \ \\
\centerline{Figure 10.2; $col_X(R_1(D))= col_X(D)$}
\begin{enumerate}
\item[$(R_2)$] The second Reidemeister move requires invertibility of $*$. In fact, the move from 
Fig.~10.3(i) requires $*_b$ to be injective ($a*b=a'*b \Rightarrow a=a'$) and that of Fig.~10.3(ii) requires $*_b$ 
to be bijective (for any $a,b$ there is the unique $x$ such that $a=x*b$).
\end{enumerate}
\centerline{\psfig{figure=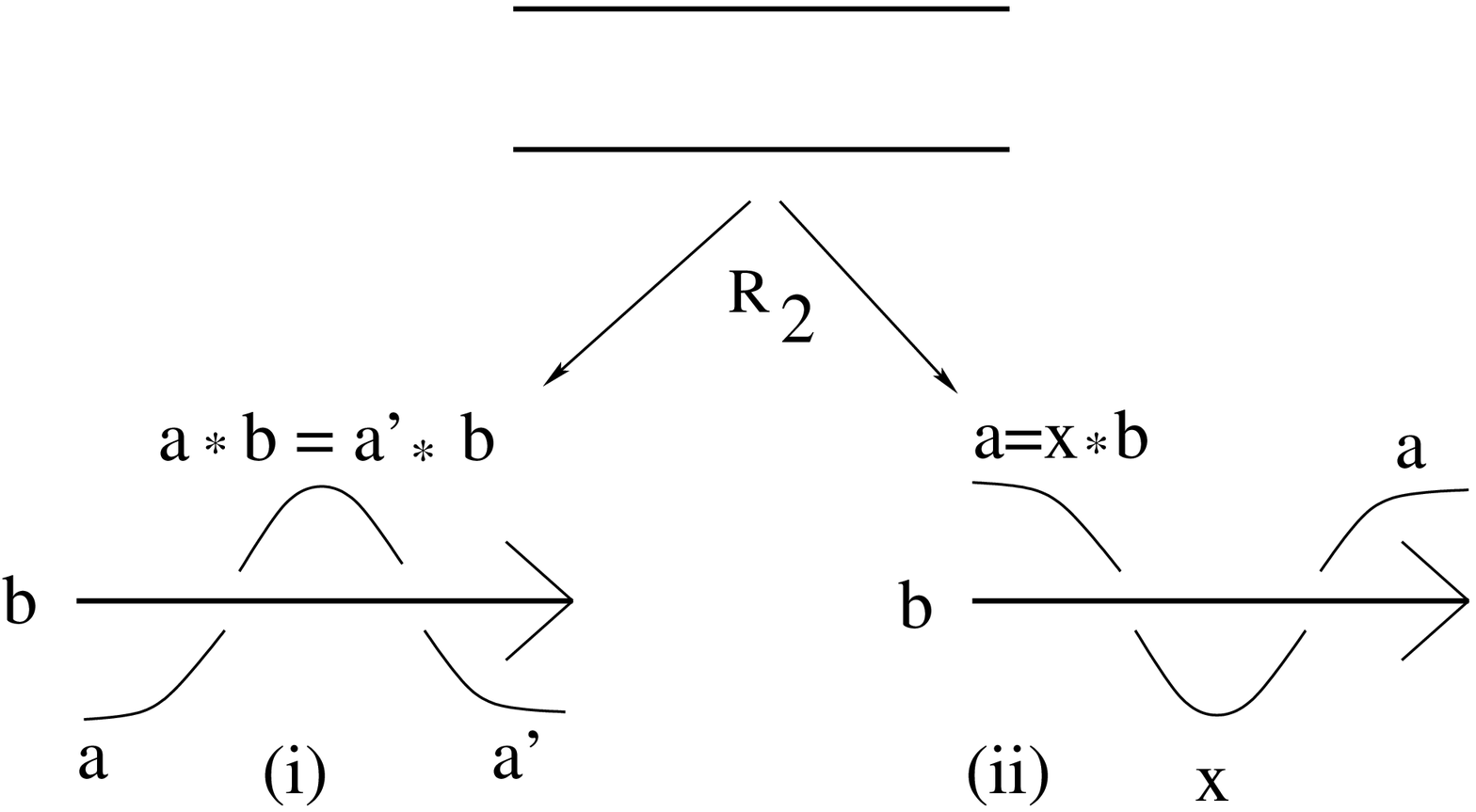,height=6.5cm}}\ \\ \ \\
\centerline{Figure~10.3;  $col_X(R_2(D))= col_X(D)$}
\begin{enumerate}
\item[$(R_3)$] We illustrate the need for right self-distributivity of $*$ in Figure 10.4, where we choose 
all crossings to be positive. If $*$ is also invertible, then all other choices of orientation 
 follow as well (Proposition \ref{Proposition 2.4} can be used then).
\\ \ \\
\centerline{\psfig{figure=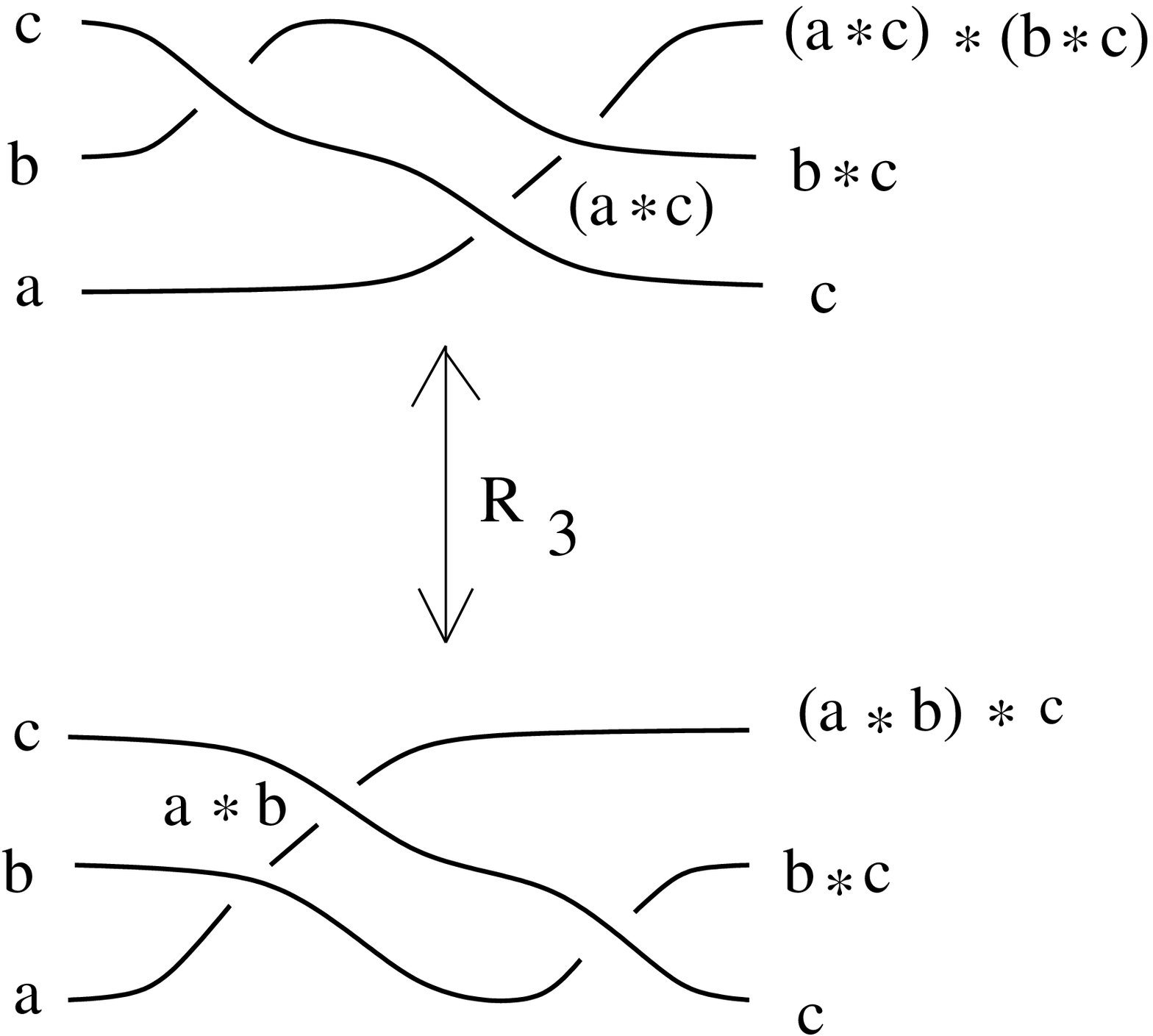,height=5.1cm}}
\\ \ \\
\centerline{Figure 10.4;  $col_X(R_3(D))= col_X(D)$}
\end{enumerate} 

\subsection{ Motivation for degenerate chains and quandle homology}\label{Subsection 10.1}\

Carter, Kamada and Saito at al. noticed in 1998 \cite{CKS,CJKLS} that if one colors a link diagram $D$ 
by elements of a given
quandle $(Q,*)$ as described above, and then considers a sum over  all crossings of $D$ 
of pairs in $Q^2$, $\pm (a,b)$, according to the following convention:
\\ \ \\
\centerline{\psfig{figure=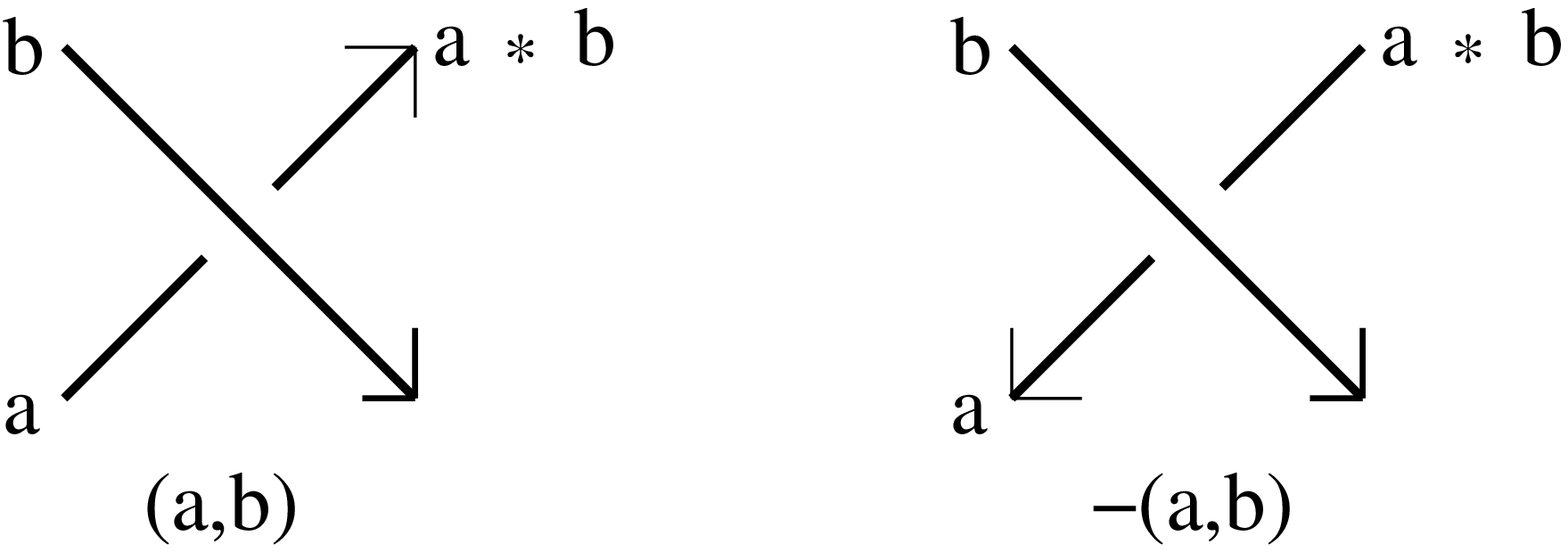,height=3.4cm}} 
\ \\ 
\centerline{Figure 10.5;  building the 1-chain for an oriented link diagram and its coloring}
\ \\ 
then the sum $c(D)=\sum_{v \in \{Crossings\}} sgn(v)(a(v),b(v))$ is not only a 1-chain, but is a 1-cycle 
in $C_1(Q)$, and its class in the first homology $H^Q_1(Q)$ is invariant under Reidemeister moves.
We show this carefully, and in particular, stress the difference (shift) in grading. The history 
of discovering quandle homology is surveyed in \cite{Car}.

\begin{enumerate}
\item[(i)] Carter, Kamada, and Saito have considered cocycle invariants, and in their convention, an element $Q^2 \to \Z $ 
is a 2-cocycle. For us, however, the sum constructed above is a 1-cycle, an element of 
$C_1(Q)=\Z Q^2$. The (rack or quandle) boundary operation they consider is $\partial^R= \partial^{(*)}- \partial^{(*_0)}$, 
and for $\Z Q^2$ we get $\partial^R(x_0,x_1)= x_0 - x_0*x_1$. In our case, if a contribution of the crossing $v$ 
is $sgn(v) (a,b)$, then its contribution to $\partial^R(c(D)$ is $sgn(v) (x_0 - x_0*x_1)$. Figure 10.5 
informs us that the contribution is exactly the difference between the label of an undercrossing  at the entrance 
minus the label at the exit (when moving according to the orientation). Thus, obviously, each component 
contributes zero to $\partial^R(c(D))$, thus $\partial^R(c(D))$ is a 1-cycle.
\item[(ii)] The first Reidemeister move introduces $(a,a)$ into the sum, so we have to declare it to 
be zero; here the need to consider normalized or quandle homology arises. Now $\partial^Q: C/C^D \to C/C^D$.
The second Reidemeister move always works as the crossings involved in it have opposite signs, so 
the new contribution to $w(c)$ cancels.
\item[(iii)] With the third Reidemeister move, we consider the move from Figure 10.4 (it requires some 
topological manipulation, but it is well known that it is sufficient). Thus 
the contributions to $c(D)$ of three crossings from the top diagram is $(b,c)+(a,c)+(a*c,b*c)$ and 
of the bottom diagram is $(a,b)+ (a*b,c) + (b,c)$. Now $\partial^R(a,b,c)= (a,c)-(a,b)-(a*b,c)+(a*c,b*c)$,
which is exactly $c(D)-c(R_3(D))$. Thus $c(D)$ and $c(R_3(D))$ are homologous in $H_1^R(D)$ and $H^Q_1(D)$.
\item[(iv)] We showed that if $(Q,*)$ is any quandle and we choose a quandle coloring of $D$, then $c(D)$ 
yields an element of $H_1^Q(Q)$ preserved by all Reidemeister moves. However, if $(Q,*)$ is only a rack, 
then $c(D)$ is an invariant of $R_2$ and $R_3$ (a so called invariant of regular isotopy), thus 
$c(D)$ yields an element of $H_1^Q(Q)$ invariant up to regular isotopy.
\item[(v)] One can improve (iv) slightly and make our cycle invariant $c(D)$ more useful  by noting that 
$c(D)$ yields an invariant of framed isotopy. Here we observe that we can move a ``kink" of the first Reidemeister 
move under another arc using $R_2$ and $R_3$ only, and cancel contributions from ``kinks" of the opposite sign, 
as long as they are in the same component.
\end{enumerate}
The above considerations have been generalized to surfaces in 4-space, or more generally, to 
codimension two embeddings; in fact, it was an initial motivation for Fenn, Rourke, and Sanderson to 
introduce rack homology around 1990.  
There is another remarkable cocycle invariant developed in \cite{R-S,CKS} for codimension 2 embeddings, 
 coming from shadow colorings by elements of $(X;*)$. It is a 3-cocycle invariant in classical knot theory 
(we formulate it below in a homology language and with a dimension shift; thus we construct a 2-cycle in $C_n(X)$).
\begin{definition}\cite{R-S,CKS}\label{Definition 10.1}
Let $(X,*)$ be a rack and $D$ an oriented link diagram. We decorate arcs of $D$  by elements of $X$ as in 
the previous definition (Figure 10.1). Additionally, we color regions of $R^2 -D$ by elements of $X$ according 
to the convention: \parbox{2.5cm}{\psfig{figure=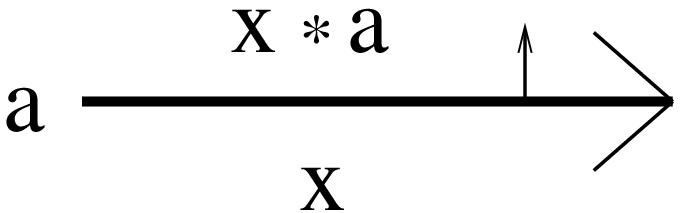,height=0.8cm}} (the small arrow is added to 
record a positive orientation of the projection surface).
For a given shadow coloring we define a 2-cycle $c_2(D)\in C_2^R(X)$ as the sum over all crossings of $D$ 
of terms $\pm (x,a,b)$ according to the convention of Figure 10.6: \\ \ \\
\centerline{\psfig{figure=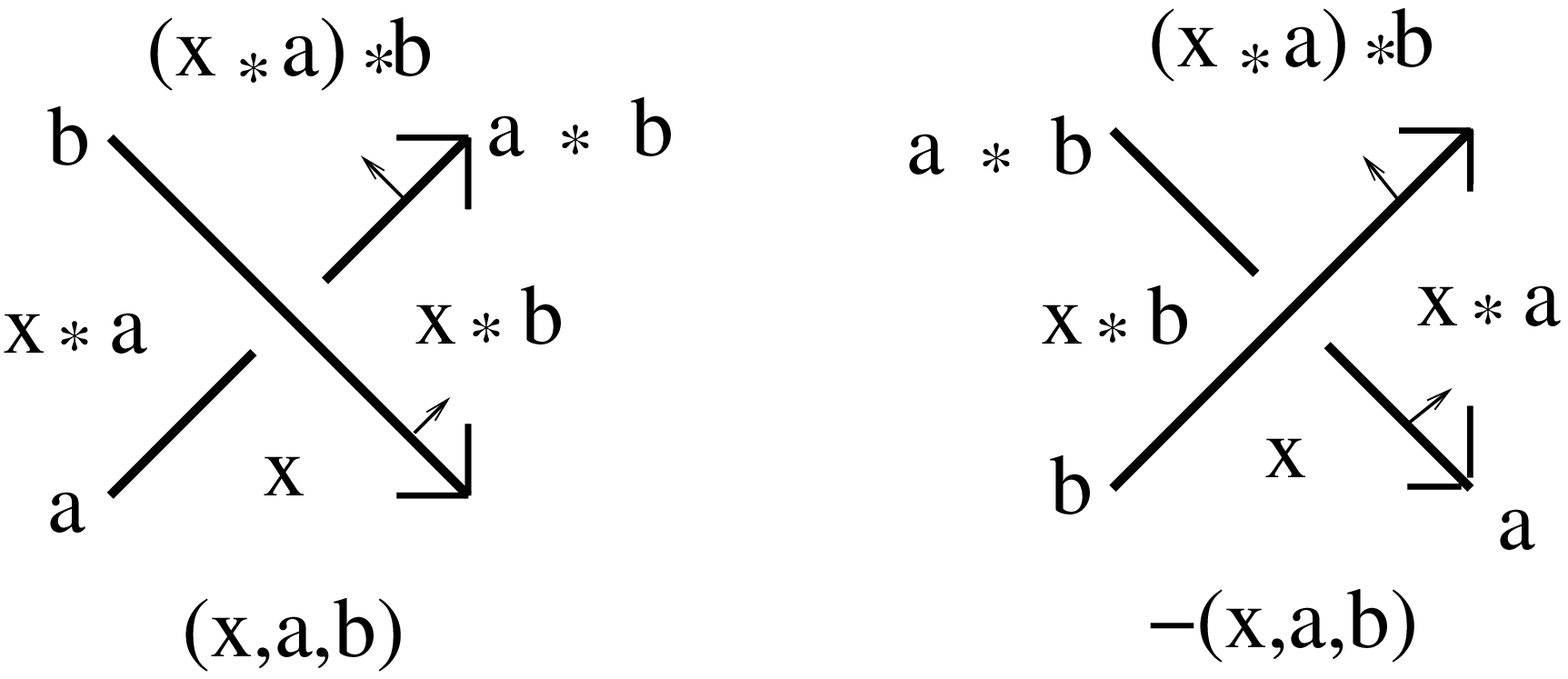,height=3.8cm}}
\ \\
\centerline{Figure 10.6;  building a 2-chain for an oriented link diagram using a shadow coloring}
\ \\
One can check that $c_2(D)$ is a 2-cycle in $C_2(X)$. Further,
$c_2(D)$ is preserved by the second Reidemeister move (to see the cancellation of contributions 
from two new crossings after $R_2$, we should just put together crossings of Figure 10.6). 
With a little more effort one shows that $c_2(R_3(D)) - c_2(D)$ is a boundary (e.g. if we shade 
regions of Figure 10.4, with the bottom region labelled by $x$, then $c_2(R_3(D)) - c_2(D) = \partial (x,a,b,c)$). 
Thus $c_2(D)$ and $c_2(R_3(D)$ 
are homologous in $H_2^R(D)$. To summarize, the homology class of $c_2(D)$ is a regular isotopy invariant.\\
If $(X,*)$ is a quandle, we can work with quandle homology $H^Q_2(X)$, and because the contribution of the 
new crossing in a first Reidemeister move is a degenerate element, the class of $c_2(D)$ in 
$H^Q_2(X)$ is preserved by all Reidemeister moves.
\end{definition}

If we only care about the third Reidemeister move of Figure 10.4, we can work with any shelf $(X,*)$.  
The usefulness of working only with some Reidemeister moves may be debated, but there is already 
a considerable body of literature on the topic \cite{CESS}. 

\begin{remark}\label{Remark 10.2} Recall that the map $p_0: C_n(X) \to C_{n-1}(X)$ is given by\\
 $p_0(x_0,x_1,...,x_n) = 
(x_1,...,x_n)$ and that, as noted in Lemma \ref{Lemma 8.7}, $(-1)^{n+1}p_0$ is a chain map on 
$(C_n\otimes \Z_{\Sigma}, \partial_n^{(a_1,...,a_k)})$.
If $\Sigma =\sum_{i=1}^ka_i =0$, as is the case for rack homology, then $(-1)^{n+1}p_0$ is a chain map.
Our observation is that $p_0(c_2(D))= c(D)$, which follows from the construction, but should have 
some interesting consequences. It is true, in general, that for a given $n$-dimensional ``diagram" $D$ 
of an $n$-dimensional manifold in $R^{n+1}$, the $n$-chain corresponding to a shadow coloring of $D$ is 
sent by $p_0$ to a coloring of $D$. We plan to address the significance of this in \cite{P-R}.
\end{remark}

\section{Yang-Baxter Homology?}
\subsection{From self-distributivity to Yang Baxter equation}

Let $(X;*)$ be a shelf and $kX$ a free module over a commutative ring $k$ with basis $X$ (we can
call $kX$ a {\it linear shelf}). Let $V=kX$, then $V\otimes V = k(V^2)$ and the operation $*$ yields
a linear map $R=R_{(X;*)}: V\otimes V \to V\otimes V$ given by $R(a,b)=(b,a*b)$. Right self-distributivity
of $*$ gives the equation of linear maps $V\otimes V \otimes V \to V\otimes V\otimes V$:
$$ (R\otimes Id)(Id \otimes R)(R\otimes Id) = (Id \otimes R)(R\otimes Id)(Id \otimes R).$$
In general, the equation of type (1) is called a Yang-Baxter equation and the map $R$ a Yang-Baxter operator.
We also often require that $R$ is invertible. With relation to this, we notice that if $*$ is invertible
then $R_{(X;*)}$ is invertible with $R^{-1}_{(X;*)}(a,b)= (b\bar * a, a)$.

In our case $R_{(X;*)}$ permutes the base $X\times X$ of $V\otimes V$, so it is called a permutation or
a set theoretical Yang-Baxter operator. Our distributive  homology, in particular our rack homology
 $(C_n,\partial^R=\partial^{(*)}-\partial^{(*_0)})$, can be thought of as the homology of $R$.
It was generalized from the Yang-Baxter operator coming from a self-distributive $*$ to any
permutational Yang-Baxter operator (coming from biracks or biquandles), \cite{CES-2}. For a general
Yang-Baxter operator, there is no general homology theory (compare \cite{Eis-1,Eis-2}).
The goal/hope is to define homology for any Yang-Baxter operator, so that the Yang-Baxter operator
defining the Jones polynomial leads to a version of Khovanov homology.

\section{Acknowledgements}
I was partially supported by the  NSA-AMS 091111 grant, 
by the Polish Scientific Grant: Nr. N-N201387034, and by the GWU REF grant.

I would like to thank participants of my lectures, readers of early versions of this paper, and the referee for many 
useful comments and suggestions.

\ \\
Address:\\
Department of Mathematics,\\
George Washington University,\\
przytyck@gwu.edu \\
and Gda\'nsk University
\end{document}